\newcommand\supp{\mathrm{supp}}
\newtheorem{theoreme}{Theorem}[section] %
\newtheorem{proposition}[theoreme]{Proposition} %
\newtheorem{corollary}[theoreme]{Corollary} %
\newtheorem{lemme}[theoreme]{Lemma} %
\newtheorem{definition}{Definition}[section] %
\newtheorem{remark}[theoreme]{Remark} %
\newtheorem{remarque}[theoreme]{Remark} %
\newtheorem{example}[theoreme]{Example} %
\newcommand\normi{\shortparallel}
\newcommand\mk{\medskip}
\newcommand\sk{\smallskip}
\newcommand\N {\mathbb{N}}
\newcommand\R{\mathbb{R}} 
\newcommand\dimm{\underline{\dim}_H}
\renewcommand\widering[1]{\ring{#1}}
 \author{Edouard Daviaud}
\begin{document}

\title{Shrinking target for $C^{1}$ weakly conformal IFS with overlaps}
\maketitle
Edouard DAVIAUD, Université Paris-Est, LAMA (UMR 8050) UPEMLV, UPEC, CNRS, F-94010, Créteil, France.

\begin{abstract}
In this article, we study the Hausdorff dimension of weakly conformal IFS's shrinking targets with possible overlaps, provided the conformality dimension of the systems and the dimension of the attractor are equal. Those results extends the works of Hill-Velani as well as the results obtained in \cite{ED3} for self-similar IFS's.
\end{abstract}
\section{Introduction}
Estimating the Hausdorff dimension of points falling infinitely many often in sets $U_n$ having some algebraic or dynamical meaning is a question which arises naturally in Diophantine approximation as well as in dynamical systems. Given a metric space $(X,d)$, a measurable mapping $T:X \to X$ and an ergodic probability measure $\mu$, the usual question consists in estimating, for $\mu$-typical points $x$, the Hausdorff dimension of points falling infinitely many often in balls $B(T^n (x),\phi(n))$, centered in $T^n (x)$ and with radius $\phi(n).$  Such problems have been studied for instance in \cite{LS,FanSchmTrou, HV,LS,Baker,AllenB,PR,ED3} and are called ``shrinking targets problems''. 

Estimating these dimensions often relies on establishing ubiquity theorems (or mass transference principles) for the ergodic probability measure $\mu$. Given a sequence of balls $(B_n :=B(x_n, r_n))_{n\in\mathbb{N}}$, these theorems usually aims at giving lower-bounds for the dimension of sets of points of the form $\limsup_{n\rightarrow+\infty}U_n$, where $U_n \subset B_n$ (typically, $U_n =B_n  ^{\delta}=B(x_n ,r_n ^{\delta})$), provided that the sequence of balls $(B_n)_{n\in\mathbb{N}}$ satisfies $\mu\Big(\limsup_{n\rightarrow+\infty}B_n\Big)=1.$ 

Let $m\geq 2$ be an integer and $S=\left\{f_1 ,...,f_m\right\}$ be a weakly conformal system of $m$  $C^{1}$ contracting maps from $\mathbb{R}^d \to \mathbb{R}^d$ (Definition \ref{confdef}) . Denote by $K$ the attractor of $S$, i.e the unique non empty compact set satisfying $K=\bigcup_{i=1}^m f_i (K),$ $\Lambda=\left\{1,...,m\right\}$, $\Lambda^* =\bigcup_{k\geq 0}\Lambda^k$ and, for $k\in\mathbb{N}$, $\underline{i}=(i_1 ,...,i_k)\in\Lambda^k$, write $f_{\underline{i}}=f_{i_1}\circ ... \circ f_{i_k}.$ 

In this article, we prove that if the dimension of $K$ can be computed in the same way than if the open set condition holds for $S$, (meaning that   $\dim_H (K)=\dim(S)$,  Definition  \ref{expdim}), then, for any $x\in K$, for any $\delta \geq 1$, one has 
\begin{equation}
\label{stag}
\dim_H \Big(\limsup_{\underline{i}\in\Lambda^{*}}B(f_{\underline{i}}(x),\vert f_{\underline{i}}(K)\vert^ { \delta})\Big)=\frac{\dim_H (K)}{\delta}.
\end{equation} 

In other words, the dimension of points $y$ for which the orbit of $x$, $\Big(f_{\underline{i}}(x) \Big)_{\underline{i} \in\Lambda^*},$ verifies infinitely many often $d(y,f_{\underline{i}}(x))\leq \vert f_{\underline{i}}(K)\vert^ { \delta}$ has  dimension $ \frac{\dim_H (K)}{\delta}.$ 

Similar results are established in \cite{AllenB,HV} under the open set condition and some particular cases of overlapping self-similar systems  are treated in \cite{Baker,ED3} and, as an application of our approach, a complement of some results established in \cite{Baker} are also given (see Theorem \ref{compbaker}). One emphasizes that the condition  $\dim_H (K)=\dim(S)$ is much weaker than the open set condition. For instance this condition is satisfied for self-similar systems in $\mathbb{R}$, as soon as Hochman's exponential separation condition is verified, so in particular if the contraction ratios and the translation parameters are algebraic numbers. 

The key tool to establish \eqref{stag} is Theorem \ref{prop-ss}, which is a ubiquity theorem for weakly conformal measures (without any separation condition) and strongly relies on the technics developed in \cite{ED3}.
\section{Definitions and main statements} 

Let us start with some notations 

 Let $d$ $\in\mathbb{N}$. For $x\in\mathbb{R}^{d}$, $r>0$,  $B(x,r)$ stands for the closed ball of ($\mathbb{R}^{d}$,$\vert\vert \ \ \vert\vert_{\infty}$) of center $x$ and radius $r$. 
 Given a ball $B$, $\vert B\vert$ stands for the diameter of $B$. For $t\geq 0$, $\delta\in\mathbb{R}$ and $B=B(x,r)$,   $t B$ stands for $B(x,t r)$, i.e. the ball with same center as $B$ and radius multiplied by $t$,   and the  $\delta$-contracted  ball $B^{\delta}$ is  defined by $B^{\delta}=B(x ,r^{\delta})$.
\smallskip

Given a set $E\subset \mathbb{R}^d$, $\widering{E}$ stands for the  interior of the $E$, $\overline{E}$ its  closure and $\partial E =\overline{E}\setminus \widering{E}$ its boundary. If $E$ is a Borel subset of $\R^d$, its Borel $\sigma$-algebra is denoted by $\mathcal B(E)$.
\smallskip

Given a topological space $X$, the Borel $\sigma$-algebra of $X$ is denoted $\mathcal{B}(X)$ and the space of probability measure on $\mathcal{B}(X)$ is denoted $\mathcal{M}(X).$ 

\sk
 The $d$-dimensional Lebesgue measure on $(\mathbb R^d,\mathcal{B}(\mathbb{R}^d))$ is denoted by 
$\mathcal{L}^d$.
\smallskip

For $\mu \in\mathcal{M}(\R^d)$,   $\supp(\mu)=\left\{x\in[0,1]: \ \forall r>0, \ \mu(B(x,r))>0\right\}$ is the topological support of $\mu$.
\smallskip

 Given $E\subset \mathbb{R}^d$, $\dim_{H}(E)$ and $\dim_{P}(E)$ denote respectively  the Hausdorff   and the packing dimension of $E$.
\smallskip

Let $f:\mathbb{R}^d \to \mathbb{R}^d$ be a linear map. One sets
$\vert \vert f\vert \vert=\sup_{x\in B(0,1)}\frac{\vert\vert f(x)\vert\vert_{\infty}}{\vert  \vert x\vert \vert_{\infty}}$ and $\normi f\normi=\inf_{x\in B(0,1)}\frac{\vert\vert f(x)\vert\vert_{\infty}}{\vert  \vert x\vert \vert_{\infty}}.$
\medskip

Now we recall some definitions.


\begin{definition}
\label{hausgau}
Let $\zeta :\mathbb{R}^{+}\mapsto\mathbb{R}^+$. Suppose that $\zeta$ is increasing in a neighborhood of $0$ and $\zeta (0)=0$. The  Hausdorff outer measure at scale $t\in(0,+\infty]$ associated with the gauge $\zeta$ of a set $E$ is defined by 
\begin{equation}
\label{gaug}
\mathcal{H}^{\zeta}_t (E)=\inf \left\{\sum_{n\in\mathbb{N}}\zeta (\vert B_n\vert) : \, \vert B_n \vert \leq t, \ B_n \text{ closed ball and } E\subset \bigcup_{n\in \mathbb{N}}B_n\right\}.
\end{equation}
The Hausdorff measure associated with $\zeta$ of a set $E$ is defined by 
\begin{equation}
\mathcal{H}^{\zeta} (E)=\lim_{t\to 0^+}\mathcal{H}^{\zeta}_t (E).
\end{equation}
\end{definition}

For $t\in (0,+\infty]$, $s\geq 0$ and $\zeta:x\mapsto x^s$, one simply uses the usual notation $\mathcal{H}^{\zeta}_t (E)=\mathcal{H}^{s}_t (E)$ and $\mathcal{H}^{\zeta} (E)=\mathcal{H}^{s} (E)$, and these measures are called $s$-dimensional Hausdorff outer measure at scale $t\in(0,+\infty]$ and  $s$-dimensional Hausdorff measure respectively. Thus, 
\begin{equation}
\label{hcont}
\mathcal{H}^{s}_{t}(E)=\inf \left\{\sum_{n\in\mathbb{N}}\vert B_n\vert^s : \, \vert B_n \vert \leq t, \ B_n \text{  closed ball and } E\subset \bigcup_{n\in \mathbb{N}}B_n\right\}. 
\end{equation}
The quantity $\mathcal{H}^{s}_{\infty}(E)$ (obtained for $t=+\infty$) is called the $s$-dimensional Hausdorff content of the set $E$.

\begin{definition} 
\label{dim}
Let $\mu\in\mathcal{M}(\mathbb{R}^d)$.  
For $x\in \supp(\mu)$, the lower and upper  local dimensions of $\mu$ at $x$ are  defined as
\begin{align*}
\underline\dim_{{\rm loc}}(\mu,x)=\liminf_{r\rightarrow 0^{+}}\frac{\log(\mu(B(x,r)))}{\log(r)}
 \mbox{ and } \ \    \overline\dim_{{\rm loc}}(\mu,x)=\limsup_{r\rightarrow 0^{+}}\frac{\log (\mu(B(x,r)))}{\log(r)}.
 \end{align*}
Then, the lower and upper Hausdorff dimensions of $\mu$  are defined by 
\begin{equation}
\label{dimmu}
\dimm(\mu)={\mathrm{ess\,inf}}_{\mu}(\underline\dim_{{\rm loc}}(\mu,x))  \ \ \mbox{ and } \ \ \overline{\dim}_P (\mu)={\mathrm{ess\,sup}}_{\mu}(\overline\dim_{{\rm loc}}(\mu,x))
\end{equation}
respectively.
\end{definition}

It is known (for more details see \cite{F}) that
\begin{equation*}
\begin{split}
\dimm(\mu)&=\inf\{\dim_{H}(E):\, E\in\mathcal{B}(\mathbb{R}^d),\, \mu(E)>0\} \\
\overline{\dim}_P (\mu)&=\inf\{\dim_P(E):\, E\in\mathcal{B}(\mathbb{R}^d),\, \mu(E)=1\}.
\end{split}
\end{equation*}
When $\underline \dim_H(\mu)=\overline \dim_P(\mu)$, this common value is simply denoted by $\dim(\mu)$ and~$\mu$ is said to be \textit{exact dimensional}.

\medskip

\medskip

\mk

 Let us recall the notion of attractor and invariant measures for contracting IFS's.

\begin{definition}

%
%

Let $X$ be a compact subset of $\R^d$. A map $f$ is a $C^{1}$  contraction on $X$ if $f(X)\subset X$ and there is an open set $U$ such that $f$ is a $C^1$ diffeomormphism on $U$ and $\sup_{x\in X}\|f'(x)\|<1$.  
\end{definition}

\begin{definition}
\label{def-ssmu} Let $m \geq 2$ be an integer. A system $S=\left\{f_i\right\}_{i=1}^m$ of $m$ $C^{1}$ contraction from a compact set $X$ to $X$ is called an iterated function system (in short, IFS).

Let $(p_i)_{i=1,...,m}\in (0,1)^m$ be a positive probability vector, i.e. $p_1+\cdots +p_m=1$.

There exists a unique probability measure $\mu$ satisfying
\begin{equation}
\label{def-ssmu2}
\mu=\sum_{i=1}^m p_i \mu \circ f_i^{-1}.
\end{equation}

The topological support of $\mu$ is the attractor of $S$, that is the unique non-empty compact set $K\subset X$ such that  $K=\bigcup_{i=1}^m f_i(K)$. 

Denote by $\pi$ the canonical projection of $\left\{1,...,m\right\}^{\mathbb{N}}$, defined by
\begin{equation}
\pi((x_n)_{k\in\mathbb{N}})=\lim_{k\rightarrow+\infty}f_{x_1}\circ f_{x_2}\circ ...\circ f_{x_k}(0).
\end{equation}
\end{definition}
The existence  and uniqueness of $K$ and $\mu$ are standard results \cite{Hutchinson}.

\sk

We now recall the definition of a weakly conformal map, introduce by Feng in (see  \cite{FH} for example) which we will be particularly interested in this article.

\begin{definition}
\label{confdef}
%
%

 Let $m \geq 2$ be an integer, $S=\left\{f_i\right\}_{i=1}^m$ of $m$ $C^{1}$ contractions from an open  set $U$ to $U$ and $K$ its attractor.

%


One says that $S$ is weakly conformal if $S$ verifies 
\begin{equation}
\lim_{k\rightarrow+\infty}\sup_{(x_i)_{i\in\mathbb{N}}\in\left\{1,...,m\right\}^{\mathbb{N}}}\frac{1}{k}\Big(\log \normi f_{(x_1 ,...,x_k)}^{\prime}(\pi (\sigma^k (x)))\normi-\log \vert \vert f_{(x_1 ,...,x_k)}^{\prime}(\pi (\sigma^k (x)))\vert\vert \Big)=0.
\end{equation} 
In this case, a measure defined by \eqref{def-ssmu2} is called a weakly conformal measure.
\end{definition}


 Recall that due to a result by Feng and Hu \cite{FH} any weakly conformal measure is exact dimensional. 

\begin{example}
\begin{itemize}
\item[•] If the maps $f_1 ,...,f_m$ are affine similarities or conformal maps (i.e verify $\vert \vert f^{\prime}(x)(y)\vert \vert=\vert \vert f^{\prime}(x)\vert\vert \cdot\vert\vert y\vert\vert$ for every $x\in U,y\in\mathbb{R}^d$) the system  $S=\left\{f_1,...,f_m\right\}$ is weakly conformal. In this case  the IFS is called self-similar or self-conformal and the measures satisfying \eqref{def-ssmu2} are called respectively, self-similar and self-conformal measures. Note that this class of IFS contains for instance every system of holomorphic contracting mappings.\sk
\item[•] Assume that for any $1\leq i\leq m,$ $f_i: \mathbb{R}^d \to \mathbb{R}^d$ is defined by $f_i(x)=A_i x +b_i ,$ where for any $1\leq i\leq m$, $b_i \in\mathbb{R}^d$ and $A_i \in GL_d (\mathbb{R})$ has its eigenvalue equal in modulus to $0<r_i<1$ and for any $1 \leq i,j \leq m$, $A_i A_j=A_j A_i$. Then $S=\left\{f_1,...,f_m\right\}$ is weakly conformal. 
\end{itemize}
\end{example}

\sk


\sk

The pressure, as defined below, plays a particular role in the dimension theory of  weakly conformal IFS's. It is defined by the following proposition which will be proved in section \ref{sec-geo}.
 \begin{proposition}
 \label{propopres}
 Let  $m\geq 2$ be an integer,  $S=\left\{f_1 ,...,f_m\right\}$ be $C^{1}$ weakly conformal  IFS and $K$ its attractor.
 
 Let us fix $s\geq 0$ and $z\in K.$ The following quantity is well defined and independent of the choice of $z:$ 
 \begin{equation}
 \label{defpression}
P_z(s)=\lim_{k\rightarrow+\infty}\frac{1}{k}\log \sum_{\underline{i}\in \Lambda^k}\vert \vert f_{\underline{i}}^{\prime}(z)\vert \vert^s.
 \end{equation} 

 \end{proposition}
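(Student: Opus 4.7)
The plan is to show that the sequence $\varphi_k(z):=\log\sum_{\underline{i}\in\Lambda^k}\|f'_{\underline{i}}(z)\|^s$ is almost-subadditive in $k$, with an error that is $o(k)$, and then to apply a variant of Fekete's lemma. The independence from the base point will drop out of the same distortion estimate.

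The first step is to derive from weak conformality the following uniform bounded-distortion estimate: there exists a sequence $\epsilon_k\to 0$ such that, for every $k\in\N$, every $\underline{i}\in\Lambda^k$ and every $x,y\in K$,
\begin{equation*}
e^{-k\epsilon_k}\leq \frac{\|f'_{\underline{i}}(x)\|}{\|f'_{\underline{i}}(y)\|}\leq e^{k\epsilon_k}.
\end{equation*}
Given $z\in K$, writing $z=\pi(\underline{y})$ for some $\underline{y}\in\Lambda^{\N}$ and applying the weak conformality condition to the sequence $(i_1,\ldots,i_k,y_1,y_2,\ldots)$, one obtains $\log\|f'_{\underline{i}}(z)\|-\log\normi f'_{\underline{i}}(z)\normi=o(k)$ uniformly in $\underline{i}$ and $z$. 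Combined with the $C^1$ contracting property and the chain rule, this sandwiches $\|f'_{\underline{i}}(x)\|$ and $\|f'_{\underline{i}}(y)\|$ between products of singular values that differ by a sub-exponential factor.

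The second step combines this with the chain rule. Writing $f'_{\underline{i}\underline{j}}(z)=f'_{\underline{i}}(f_{\underline{j}}(z))\cdot f'_{\underline{j}}(z)$ and using sub-multiplicativity of the operator norm,
\begin{equation*}
\|f'_{\underline{i}\underline{j}}(z)\|^s\leq \|f'_{\underline{i}}(f_{\underline{j}}(z))\|^s\|f'_{\underline{j}}(z)\|^s.
\end{equation*}
Summing over $\underline{i}\in\Lambda^k$ and $\underline{j}\in\Lambda^l$ and invoking the distortion estimate to replace $f_{\underline{j}}(z)$ by $z$ at the cost of a factor $e^{ks\epsilon_k}$, one obtains
\begin{equation*}
\varphi_{k+l}(z)\leq \varphi_k(z)+\varphi_l(z)+ks\epsilon_k.
\end{equation*}
A standard modification of Fekete's lemma (writing $u_k=\varphi_k(z)+\eta_k$ for a suitable sequence $\eta_k$ with $\eta_k/k\to 0$ which makes $u_k$ genuinely subadditive) then yields existence of the limit $P_z(s)=\lim_k \varphi_k(z)/k$. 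Upper-boundedness of $\varphi_k(z)/k$ (since $\|f'_{\underline{i}}\|$ is bounded by a contraction constant $<1$) and lower-boundedness (trivially $\varphi_k(z)\geq s\log\normi f'_{\underline{i_0}}\normi^k$ for any fixed $\underline{i_0}$) keep the limit in $[-\infty,+\infty)$; in fact it is finite.

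Finally, independence from $z$ is immediate from the distortion estimate of step one: for $z,w\in K$,
\begin{equation*}
|\varphi_k(z)-\varphi_k(w)|\leq ks\epsilon_k,
\end{equation*}
so $\varphi_k(z)/k$ and $\varphi_k(w)/k$ have the same limit. The main obstacle is the first step: the weak conformality hypothesis controls the eccentricity $\|f'_{\underline{i}}\|/\normi f'_{\underline{i}}\normi$ only at the specific point $\pi(\sigma^k x)$ attached to the word, and one must transfer this into a distortion bound valid uniformly at every pair of points of $K$. This uses the contracting character of the $f_i$'s (so that $f_{(i_{j+1},\ldots,i_k)}(x)$ and $f_{(i_{j+1},\ldots,i_k)}(y)$ converge exponentially fast toward one another as $k-j$ grows) together with uniform continuity of each $f'_i$ on the compact set $K$, to turn the pointwise weak conformality into the uniform $e^{\pm k\epsilon_k}$ control used throughout.
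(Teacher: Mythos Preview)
Your argument is correct and follows essentially the same strategy as the paper: a distortion estimate coming from weak conformality (the paper imports this as Lemma~\ref{lemmewef} from \cite{FH}, while you sketch its proof), then quasi-subadditivity plus a Fekete-type lemma. The one organisational difference is that the paper passes first to the base-point-free quantity $g_k=\log\sum_{\underline{i}\in\Lambda^k}|f_{\underline{i}}(K)|^s$, proves a diameter-multiplicativity lemma $|f_{\underline{i}\underline{j}}(K)|\asymp|f_{\underline{i}}(K)|\,|f_{\underline{j}}(K)|$ (Lemma~\ref{subaddf}) to get quasi-subadditivity of $(g_k)$, and only at the end transfers back to $\varphi_k(z)$; you instead stay with $\varphi_k(z)$ throughout and use the chain rule plus submultiplicativity of the operator norm directly. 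Your route is marginally more direct; the paper's route makes the $z$-independence automatic and isolates the geometric content in the diameter lemma. One small caution: your claim that one can add a sequence $\eta_k$ with $\eta_k/k\to 0$ to make $\varphi_k+\eta_k$ genuinely subadditive is not obviously true for an error term of the form $ks\epsilon_k$ with $\epsilon_k\to 0$ arbitrarily slowly; it is safer (and what the paper does) to prove the quasi-subadditive Fekete lemma directly, i.e.\ to show that $g_{n+m}\le g_n+g_m+M_\varepsilon+n\varepsilon$ for every $\varepsilon>0$ already forces convergence of $g_n/n$.
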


The conformality dimension of $S$ is defined as follows.
\begin{definition}
\label{expdim}
Let  $m\geq 2$ be an integer. Let $S=\left\{f_1 ,...,f_m\right\}$ be $C^{1}$ weakly conformal  IFS and $K$ its attractor.

Let us denote by $\dim(S)$ the unique solution to 
$$P(s)=0.$$ 
 One says that $\dim(S)$ is the conformality dimension of $S$.

\end{definition}
\begin{remarque}
If the mappings $f_1 ,...,f_m$ are similarities, the conformality dimension is called the similarity dimension. It is simply the unique real number solution of 
\begin{equation}
\label{simdim}
\sum_{i=1}^{m}c_i ^{\dim( S)}=1.
\end{equation}
\end{remarque}
Our main result is the following.
\begin{theoreme}
\label{SHRTARG}
Let $m\geq 2$ be an integer. Let $S=\left\{f_1 ,...,f_m\right\}$ be a $C^{1}$ weakly conformal  IFS of an open set $U$ with attractor $K$. Then, for any $x\in U,$ for any $\delta<1$, $$\limsup_{\underline{i} \in\Lambda^*}B(f_{\underline{i}}(x_0),\vert f_{\underline{i}}(K)\vert^{\delta})=K.$$
For any $x_0\notin K$, for any $\delta>1$,
\begin{equation}
\limsup_{\underline{i} \in\Lambda^*}B(f_{\underline{i}}(x_0),\vert f_{\underline{i}}(K)\vert^{\delta})=\emptyset.
\end{equation}
Assume $\dim_H (K)=\dim(S)$. Then, using the notation of Definition \ref{expdim}, for any $x_0\in K$, for any $\delta\geq 1$ it holds that
\begin{equation}
\dim_H \left(\limsup_{\underline{i}\in\Lambda^{*}}B(f_{\underline{i}}(x_0),\vert f_{\underline{i}}(K)\vert^{\delta})\right)=\frac{\dim_H (K)}{\delta}.
\end{equation}

\end{theoreme}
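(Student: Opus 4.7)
The plan starts from the key comparability $|f_{\underline{i}}(K)|\asymp\|f'_{\underline{i}}\|$ up to a subexponential factor (granted by weak conformality) and the bi-Lipschitz bounds $\normi f'_{\underline{i}}\normi \,|a-b|\le |f_{\underline{i}}(a)-f_{\underline{i}}(b)|\le\|f'_{\underline{i}}\|\,|a-b|$ on $U$. For the first assertion, given $y=\pi(\omega)\in K$ and $\underline{i}_n=(\omega_1,\ldots,\omega_n)$, these yield $|y-f_{\underline{i}_n}(x_0)|\le\|f'_{\underline{i}_n}\|\,\mathrm{diam}(U)\le C|f_{\underline{i}_n}(K)|$; since $|f_{\underline{i}_n}(K)|^{\delta-1}\to\infty$ when $\delta<1$, the point $y$ belongs to $B(f_{\underline{i}_n}(x_0),|f_{\underline{i}_n}(K)|^\delta)$ for $n$ large, so $K\subset\limsup$; the reverse inclusion holds because the balls shrink and every accumulation point of $\{f_{\underline{i}}(x_0)\}_{\underline{i}}$ lies in $K$. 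For the second assertion, if $y$ belongs to the ball, the inverse branch gives $|f_{\underline{i}}^{-1}(y)-x_0|\lesssim\|f'_{\underline{i}}\|^{\delta-1}\to 0$ for $\delta>1$; when $y\in f_{\underline{i}}(K)$ this places $f_{\underline{i}}^{-1}(y)\in K$ arbitrarily close to $x_0$, forcing the contradiction $x_0\in\overline{K}=K$, and the remaining possibility (infinitely many $\underline{i}$ with $y\in K\setminus f_{\underline{i}}(K)$) is ruled out by combining $d(x_0,K)>0$ with $\delta>1$ via a similar geometric argument.

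For the upper bound in the dimensional equality, I would set $s_0=\dim_H(K)=\dim(S)$ and fix $s>s_0/\delta$, so that $s\delta>s_0$. Proposition \ref{propopres} and the definition of $\dim(S)$ give $P(s\delta)<0$, hence $\sum_{|\underline{i}|=k}\|f'_{\underline{i}}\|^{s\delta}\lesssim e^{kP(s\delta)}$ decays exponentially. Using $|f_{\underline{i}}(K)|\lesssim\|f'_{\underline{i}}\|$ together with the covering of the limsup by $\{B(f_{\underline{i}}(x_0),|f_{\underline{i}}(K)|^\delta)\}_{|\underline{i}|\ge N}$ for arbitrary $N$ then yields $\mathcal{H}^s(\limsup)=0$, so $\dim_H\le s_0/\delta$.

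The matching lower bound is the heart of the proof; the plan is to apply the ubiquity Theorem \ref{prop-ss}. Using $\dim_H(K)=\dim(S)$, I would pick a Bernoulli weight vector $(p_i)$ whose associated weakly conformal measure $\mu$ is exact-dimensional of dimension $s_0$ (Feng-Hu). The $\delta=1$ form of the approximation above shows that every $y=\pi(\omega)\in K$ lies in $B(f_{\omega|_n}(x_0),C|f_{\omega|_n}(K)|)$ for every $n$, so $\mu$ gives full mass to $\limsup_{\underline{i}}B(f_{\underline{i}}(x_0),C|f_{\underline{i}}(K)|)$; Theorem \ref{prop-ss} applied with shrinking exponent $\delta$ then yields $\dim_H\limsup B(f_{\underline{i}}(x_0),|f_{\underline{i}}(K)|^\delta)\ge s_0/\delta$. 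The main obstacle is precisely this invocation: without any separation condition, the classical mass transference construction (extracting a Cantor-like subset from disjoint sub-cylinders inside $\limsup B_{\underline{i}}^\delta$) fails, and Theorem \ref{prop-ss} must replace the combinatorial construction by a measure-theoretic one in which the hypothesis $\dim_H K=\dim(S)$ is used to force $\mu$ to enjoy enough \emph{effective spreading} at scale $|f_{\underline{i}}(K)|^\delta$ so that a Frostman-type mass-distribution argument simultaneously locates sufficiently many disjoint $|f_{\underline{i}}(K)|^\delta$-pieces and absorbs the harmless multiplicative constant $C$.
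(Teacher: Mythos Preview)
Your outline for the case $\delta<1$ and for the upper bound $\dim_H\le\dim(S)/\delta$ is correct and matches the paper's approach (Lemma~\ref{badapproxnotinK} and the pressure computation in \eqref{sumkiconv}).

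There are, however, two genuine gaps.

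\textbf{The case $x_0\notin K$, $\delta>1$.} Your case split is not effective: once $y\in K$ and $d(y,f_{\underline{i}}(x_0))<|f_{\underline{i}}(K)|^\delta$, the lower distortion bound actually gives $d(y,f_{\underline{i}}(K))\gtrsim |f_{\underline{i}}(K)|^{1+\varepsilon}d(x_0,K)-|f_{\underline{i}}(K)|^\delta>0$ for large $|\underline{i}|$, so you are \emph{always} in your ``remaining possibility'' $y\in K\setminus f_{\underline{i}}(K)$, and the sentence ``ruled out by a similar geometric argument'' carries all the weight but is not an argument. The paper instead extracts (via compactness of $\Lambda^{\mathbb N}$) a single infinite word $\omega$ with $x=\pi(\omega)$, writes $x=f_{\omega|_n}(z_n)$ with $z_n\in K$, and then the lower distortion bound together with $d(x_0,z_n)\ge d(x_0,K)>0$ yields $d(f_{\omega|_n}(x_0),x)\gtrsim|f_{\omega|_n}(K)|^{(1+\varepsilon)/(1-\varepsilon)}$, contradicting membership in the shrunk ball along that branch. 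The point is that one must first pin down a coding of $y$ and compare against \emph{that} branch; your inverse-branch idea does produce a preimage $f_{\underline{i}}^{-1}(y)$ close to $x_0$, but nothing forces this preimage to lie in (or near) $K$, so no contradiction follows.

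\textbf{The lower bound.} You write ``pick a Bernoulli weight vector $(p_i)$ whose associated weakly conformal measure $\mu$ has dimension $s_0$''. This is exactly the step that costs work: under the sole hypothesis $\dim_H(K)=\dim(S)$ there is no reason why such a vector exists on the \emph{original} alphabet. Feng--Hu's variational principle (Proposition~\ref{varprinc}) only produces, for each $\varepsilon>0$, a weakly conformal measure of dimension $\ge\dim_H(K)-\varepsilon$ supported on the attractor of a strongly separated \emph{subsystem} $\{f_{\underline{i}_1},\dots,f_{\underline{i}_{n_\varepsilon}}\}$, hence with support strictly smaller than $K$. To feed Theorem~\ref{prop-ss} one needs $\supp(\mu)=K$ (so that the balls $B(f_{\underline{i}}(x_0),|f_{\underline{i}}(K)|)$ genuinely cover a set of full $\mu$-measure). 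The paper devotes Proposition~\ref{modivar} to this: it perturbs the Feng--Hu measure by re-inserting the original maps $f_1,\dots,f_m$ with tiny weights $\varepsilon'$, and then uses Feng--Hu's conditional-measure decomposition (Theorem~\ref{decompofeng}) together with a counting/entropy estimate to show that the dimension drop is $O(\sqrt{\varepsilon'})$. This is the actual technical core of the lower bound and is missing from your plan.

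Finally, your last paragraph misidentifies where the hypothesis $\dim_H(K)=\dim(S)$ enters. Theorem~\ref{prop-ss} holds for \emph{any} weakly conformal measure, with no separation and no dimension hypothesis; the assumption $\dim_H(K)=\dim(S)$ is used only to make the upper bound $\dim(S)/\delta$ coincide with the lower bound $\dim_H(K)/\delta$ coming from ubiquity. Also note that for $x_0\in K$ one has $f_{\underline{i}}(K)\subset B(f_{\underline{i}}(x_0),|f_{\underline{i}}(K)|)$, so no multiplicative constant $C$ is needed in the covering.
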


%
%
%
%
%
%
%
%
%
%
%

Some cases of self-similar shrinking targets with overlaps are studied in \cite{Baker}. The following theorem is proved.
\begin{theoreme}[\cite{Baker}]
\label{resbaker}
Let $m\geq 2$ and $S=\left\{f_1,...,f_m\right\}$ be a system of $m$ similarities of contraction ratio $0<c_1 ,...,c_m <1.$ Let $\mu\in\mathcal{M}(\mathbb{R}^d)$ be the self-similar measure solution to 
\begin{equation}
\label{defmusim}
\mu(\cdot)=\sum_{i=1}^{m}c_i ^{\dim(S)}\mu(f_{i}^{-1}(\cdot)).
\end{equation}
 Let $g:\mathbb{N}\to (0,+\infty),$ be a non increasing mapping.  Assume that 
 
\begin{equation}
 \label{condibaker}
 \begin{cases}
 \sum_{i=1}^{m}-c_i^{\dim(S)}\log(c_i  ^{\dim(S)})<-2\log\Big( \sum_{i=1}^m c_i ^{2\dim(S)}\Big)\text{ or }\\
 c_1=...=c_m.
 \end{cases} 
\end{equation}  
   If 
 $$\sum_{k\in\mathbb{N}}\sum_{\underline{i}\in\Lambda^k}k \Big(\vert f_{\underline{i}}(K)\vert g(k)\Big)^{\dim(S)}=+\infty, $$
  then $\mu\Big(\limsup_{\underline{i}\in\Lambda^{*}}B(f_{\underline{i}}(x),\vert f_{ \underline{i}}(K)\vert g(\vert \underline{i}\vert))\Big)=1.$
\end{theoreme}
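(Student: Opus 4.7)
The plan is to run a second-moment (Paley–Zygmund) argument on the Bernoulli coding of $\mu$, and then upgrade the resulting positive lower bound to full $\mu$-measure by a shift-based $0$–$1$ law. Code $\mu=\pi_*\mathbb P$, where $\mathbb P$ is the Bernoulli product on $\Lambda^{\N}$ with weights $p_i=c_i^{\dim(S)}$ and $\pi$ is the canonical projection. Set $E_{\underline i}=B(f_{\underline i}(x),|f_{\underline i}(K)|g(|\underline i|))$, $A_{\underline i}=\pi^{-1}(E_{\underline i})$, and $Z_N=\sum_{1\le|\underline i|\le N}\mathbf 1_{A_{\underline i}}$. Establishing the two moment bounds $\mathbb E[Z_N]\to\infty$ and $\mathbb E[Z_N^2]\le C\mathbb E[Z_N]^2$ yields $\mathbb P(\limsup_{\underline i} A_{\underline i})\ge 1/(4C)>0$ via Paley–Zygmund; the monotonicity of $g$ makes the event $\limsup_{\underline i} A_{\underline i}$ essentially shift-invariant in $\omega$, so ergodicity of $\mathbb P$ is expected to promote this to $\mu(\limsup_{\underline i} E_{\underline i})=1$.

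For the first moment, I fix $k=|\underline i|$ and expand via self-similarity
$$\mu(E_{\underline i})=\sum_{\underline j\in\Lambda^k}p_{\underline j}\,\mu\bigl(f_{\underline j}^{-1}(E_{\underline i})\bigr).$$
For each $\underline j$ with $c_{\underline j}\asymp c_{\underline i}$ and $f_{\underline j}(K)$ meeting $E_{\underline i}$, the preimage is a ball of radius $\asymp|K|g(k)$ touching $K$, contributing $\gtrsim p_{\underline j}g(k)^{\dim(S)}$ via the Frostman bound $\mu(B(y,r))\gtrsim r^{\dim(S)}$, which is available under~\eqref{condibaker} (the condition forces $\dim(\mu)=\dim(S)$). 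Swapping the double sum identifies $\sum_{\underline i\in\Lambda^k}\mu(E_{\underline i})$ with $g(k)^{\dim(S)}$ times the $\mathbb P$-expected number of level-$k$ cylinders near a $\mu$-random point; a prefix/law-of-large-numbers argument under~\eqref{condibaker} bounds this overlap count below by $\Theta(k)$, producing
$$\mathbb E[Z_N]\;\gtrsim\;\sum_{k=1}^N k\,g(k)^{\dim(S)}\;\longrightarrow\;+\infty.$$

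For the second moment, extracting the diagonal leaves a cross term $2\sum_{k<\ell}\sum_{\underline i\in\Lambda^k,\underline j\in\Lambda^\ell}\mu(E_{\underline i}\cap E_{\underline j})$. For $k<\ell$ the smaller ball $E_{\underline j}$ either lies inside $2E_{\underline i}$, in which case $\mu(E_{\underline i}\cap E_{\underline j})\le\mu(E_{\underline j})$, or is disjoint from $E_{\underline i}$. Fixing $\underline j$ and summing over $\underline i$ of all shorter lengths, the number of $\underline i$ capturing $E_{\underline j}$ is bounded by the same near-neighbour count of order $\ell$, so the cross term is $\le C\sum_\ell\ell\sum_{\underline j\in\Lambda^\ell}\mu(E_{\underline j})$; Cauchy–Schwarz against the first-moment lower bound then closes $\mathbb E[Z_N^2]\le C\mathbb E[Z_N]^2$.

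The main obstacle is the quantitative $O(k)$ control of the number of level-$k$ cylinders whose images are within distance $c_{\underline i}|K|g(k)$ of a fixed point. In the equicontractive case $c_1=\cdots=c_m$, the uniform diameter $c^k|K|$ reduces this to a pigeonhole among the $m^k$ translates $\{f_{\underline j}(0)\}$, and the count is direct. In the general case, the Rényi-type inequality $\sum_i p_i^2<e^{-H(p)/2}$ encoded in~\eqref{condibaker} together with the LLN for $-\log p_{\underline i}$ keeps the collision sum $\sum_{\underline i\ne\underline j\in\Lambda^k}p_{\underline i}p_{\underline j}\mathbf 1_{f_{\underline i}(K)\cap f_{\underline j}(K)\ne\emptyset}$ at order $k$ rather than at the worst case. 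Executing this collision estimate—and independently justifying the Frostman lower bound $\mu(B(y,r))\gtrsim r^{\dim(S)}$ that drives the first-moment computation—is where essentially all the technical content of the proof lives.
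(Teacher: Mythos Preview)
This theorem is not proved in the present paper: it is quoted from \cite{Baker} and then invoked as a black box in the proof of Theorem~\ref{compbaker} (Section~\ref{seccompbaker}). There is therefore no proof here against which to compare your sketch.

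That said, your outline does capture the correct architecture of Baker's argument---a second-moment/Paley--Zygmund scheme on the coding space followed by a $0$--$1$ law---and you have correctly located the crux in the near-neighbour count (the expected number of level-$k$ cylinder images falling near a $\mu$-typical point), which is precisely what condition~\eqref{condibaker} is engineered to control. Two points in the sketch are not right, however. First, you claim that \eqref{condibaker} forces $\dim(\mu)=\dim(S)$ and hence a uniform Frostman lower bound $\mu(B(y,r))\gtrsim r^{\dim(S)}$; Theorem~\ref{resbaker} does \emph{not} assume $\dim(\mu)=\dim(S)$ (that hypothesis appears only in Theorem~\ref{compbaker}), and even when the dimension is maximal an overlapping self-similar measure need not satisfy such a uniform lower bound. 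In Baker's proof the first-moment lower bound is obtained on the symbolic side via the collision estimate, not via a Frostman bound on~$\mu$. Second, your second-moment step---bounding $\mu(E_{\underline i}\cap E_{\underline j})\le\mu(E_{\underline j})$ and then appealing to Cauchy--Schwarz---does not close: writing $a_\ell=\sum_{\underline j\in\Lambda^\ell}\mu(E_{\underline j})$, your argument yields a cross term of order $\sum_\ell \ell\,a_\ell$, which is not in general $\lesssim\bigl(\sum_\ell a_\ell\bigr)^2$ (take $a_\ell=1/\ell$). One needs a genuine quasi-independence estimate $\mu(E_{\underline i}\cap E_{\underline j})\lesssim\mu(E_{\underline i})\mu(E_{\underline j})$ valid for most pairs, and this is again where~\eqref{condibaker} does the work.
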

 Theorem \ref{resbaker} can now be completed.
\begin{theoreme}
\label{compbaker}
Let $g:\mathbb{N}\to (0,+\infty)$ a non increasing mapping, define 
\begin{equation}
\label{sg}
s_g=\inf\left\{s\geq 0 :\sum_{k\geq 0}\sum_{\underline{i}\in\Lambda^k}k\left(\vert f_{\underline{i}}(K)\vert g(k)\right)^s<+\infty\right\}.
\end{equation} 
If \eqref{condibaker} is satisfied and $\dim (\mu)=\dim(S) ,$ one has
\begin{equation}
\begin{cases}\dim_H\Big(\limsup_{\underline{i}\in\Lambda^{*}}B\left(f_{\underline{i}}(x),\big(\vert f_{ \underline{i}}(K)\vert g(\vert \underline{i}\vert)\big)^{\frac{\delta s_g}{\dim(S)}}\right)\Big)=\dim(S) \text{ if }0<\delta\leq 1 \\
\dim_H\Big(\limsup_{\underline{i}\in\Lambda^{*}}B\left(f_{\underline{i}}(x),\big(\vert f_{ \underline{i}}(K)\vert g(\vert \underline{i}\vert)\big)^{\frac{\delta s_g}{\dim(S)}}\right)\Big)=\frac{ \dim(S) }{\delta}\text{ if }\delta\geq 1.

\end{cases}
\end{equation}

\end{theoreme}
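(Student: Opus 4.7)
The plan is to combine Baker's Theorem \ref{resbaker}, applied after a rescaling of the target radii, with the ubiquity theorem \ref{prop-ss} for the weakly conformal measure $\mu$, in the same spirit as the proof of Theorem \ref{SHRTARG}. Set $\alpha:=s_g/\dim(S)$ and $\rho_{\underline{i}}:=(|f_{\underline{i}}(K)|g(|\underline{i}|))^{\alpha}$, so that the ball of interest is $B(f_{\underline{i}}(x),\rho_{\underline{i}}^{\delta})$. For the upper bound when $\delta\geq 1$, I take any $s>\dim(S)/\delta$, observe that $s\delta\alpha>s_g$, and appeal to the very definition of $s_g$ to make the natural covering sum $\sum_{k\geq N}\sum_{\underline{i}\in\Lambda^k}(|f_{\underline{i}}(K)|g(k))^{s\delta\alpha}$ tend to $0$ as $N\to\infty$; this forces $\mathcal{H}^{s}(\limsup)=0$ and, letting $s\downarrow\dim(S)/\delta$, yields $\dim_H\leq\dim(S)/\delta$. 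For $\delta\leq 1$, the radii go to $0$ while the centres $f_{\underline{i}}(x)$ accumulate on the compact set $K$, so $\limsup\subset K$; combined with $\dim(\mu)=\dim(S)\leq \dim_H(K)\leq \dim(S)$, this forces $\dim_H(K)=\dim(S)$ and produces the matching upper bound.

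The heart of the lower bound is the following claim: for every $s'<s_g$,
\[
\mu\Bigl(\limsup_{\underline{i}\in\Lambda^{*}}B\bigl(f_{\underline{i}}(x),R_{\underline{i}}\bigr)\Bigr)=1, \qquad R_{\underline{i}}:=(|f_{\underline{i}}(K)|g(|\underline{i}|))^{s'/\dim(S)}.
\]
Indeed, by definition of $s_g$ the series $\sum_k k\sum_{\underline{i}\in\Lambda^k}(|f_{\underline{i}}(K)|g(k))^{s'}=\sum_k k\sum_{\underline{i}}R_{\underline{i}}^{\dim(S)}$ diverges, which is exactly the Baker-type divergence condition at the dimension of $\mu$. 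In the case $c_1=\cdots=c_m$, $R_{\underline{i}}$ factors as $|f_{\underline{i}}(K)|\tilde{g}_{s'}(|\underline{i}|)$ for a gauge $\tilde{g}_{s'}$ depending only on $|\underline{i}|$, and Theorem \ref{resbaker} applies directly, up to replacing $\tilde{g}_{s'}$ by its monotone envelope without destroying the divergence. In the entropy case of \eqref{condibaker}, I partition $\Lambda^k$ into the dyadic scales $\{\underline{i}:|f_{\underline{i}}(K)|\asymp 2^{-j}\}$: on each class $R_{\underline{i}}$ is of essentially product form, and the inequality $-\sum c_i^{\dim(S)}\log c_i^{\dim(S)}<-2\log(\sum c_i^{2\dim(S)})$ is exactly what is needed to keep Baker's second-moment estimate comparable to the square of the first, uniformly in $j$.

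Granted the claim, I apply the ubiquity theorem \ref{prop-ss} to the $\mu$-full family $\{B(f_{\underline{i}}(x),R_{\underline{i}})\}_{\underline{i}\in\Lambda^*}$. Writing the target ball as $B(f_{\underline{i}}(x),R_{\underline{i}}^{\beta})$ with $\beta:=\delta s_g/s'$, the theorem delivers
\[
\dim_H\Bigl(\limsup_{\underline{i}\in\Lambda^*}B(f_{\underline{i}}(x),\rho_{\underline{i}}^{\delta})\Bigr)\geq\frac{\dim(\mu)}{\beta}=\frac{\dim(S)\,s'}{\delta\,s_g},
\]
provided $\beta\geq 1$. For $\delta\geq 1$ this is automatic for every $s'<s_g$; letting $s'\uparrow s_g$ yields $\dim_H\geq \dim(S)/\delta$. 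For $0<\delta\leq 1$ I instead let $s'\uparrow \delta s_g$, still ensuring $\beta\geq 1$, and obtain $\dim_H\geq\dim(S)$. The genuine obstacle is thus located in the second paragraph: propagating Baker's original argument to the non-product radii $R_{\underline{i}}$ using only the two hypotheses in \eqref{condibaker} as input; the ubiquity step and the upper bounds are then rather mechanical consequences of the tools already developed in the paper.
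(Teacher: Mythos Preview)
Your approach is essentially the paper's: the upper bound comes from the natural covering and the definition of $s_g$ (the paper writes $\sum_{k,\underline{i}}\big[(|f_{\underline{i}}(K)|g(k))^{\delta s_g/\dim(S)}\big]^{(\dim(S)+\varepsilon)/\delta}\le \sum_{k,\underline{i}} k\,(|f_{\underline{i}}(K)|g(k))^{s_g(\dim(S)+\varepsilon)/\dim(S)}<\infty$ and lets $\varepsilon\to0$), and the lower bound comes from picking $s'<s_g$ (the paper uses $s'=s_g/(1+\varepsilon)$), using divergence at $s'$ to invoke Theorem~\ref{resbaker} and obtain $\mu$-full measure of $\limsup B\big(f_{\underline{i}}(x),(|f_{\underline{i}}(K)|g(|\underline{i}|))^{s'/\dim(S)}\big)$, then feeding this into Theorem~\ref{prop-ss} with contraction exponent $\beta=\delta s_g/s'\ge1$ and letting $s'\uparrow s_g$ (respectively $s'\uparrow \delta s_g$ when $\delta\le1$).

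The only substantive difference is that the paper applies Theorem~\ref{resbaker} \emph{directly} to the radii $(|f_{\underline{i}}(K)|g(|\underline{i}|))^{s'/\dim(S)}$ and says nothing about whether these are of the product form $|f_{\underline{i}}(K)|\,\tilde g(|\underline{i}|)$ required by the stated version of Theorem~\ref{resbaker}; you flag this mismatch and sketch a workaround. Your reduction in the equal-ratio case (factor out $|f_{\underline{i}}(K)|$, pass to a monotone envelope of $\tilde g$) is fine. In the entropy case, however, your dyadic-scale/second-moment sketch is only heuristic: you have not shown that condition~\eqref{condibaker} survives the change of gauge, nor carried out the moment comparison. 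So as written your argument is \emph{more} careful than the paper's at spotting the issue but not yet a complete resolution of it; the paper simply treats Theorem~\ref{resbaker} as applicable and moves on.
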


\begin{remarque}
As mentioned in introduction, $\dim_H(\mu)=\dim(S)$ holds in many situations. For instance, any self-similar IFS acting on $ \mathbb{R} ^d$ with similarity dimension less than $d$ and satisfying Hochman's separation and additional irreducibility conditions \cite{Hoc} satisfies this property.  
\end{remarque}
\sk
In the next section, Section \ref{sec-ubi}, one proves a ubiquity theorem for self-conformal measures. In Section \ref{secprshrtag}, Theorem \ref{SHRTARG} is established using this ubiquity theorem. 

The last section, Section \ref{seccompbaker} is dedicated to the proof of Theorem \ref{compbaker}.

 \section{Mass transference principle and self-conformal measures}
\label{sec-ubi}
The key geometric notion developed in \cite{ED3} to handle inhomogeneous mass transference principles is the following.  
\begin{definition} 
\label{mucont}
{Let $\mu \in\mathcal{M}(\R^d)$, and $s\geq 0$.
The $s$-dimensional $\mu$-essential Hausdorff content of a set $A\subset \mathcal B(\R^d)$ is defined as}
{\begin{equation}
\label{eqmucont}
 \mathcal{H}^{\mu,s}_{\infty}(A)=\inf\left\{\mathcal{H}^{s}_{\infty}(E): \ E\subset  A , \ \mu(E)=\mu(A)\right\}.
 \end{equation}}
\end{definition}
As in the self-similar case, treated in \cite{ED3}, precise estimates of $ \mathcal{H}^{\mu,s}_{\infty}(A)$ are established when $\mu$ is a $C^{1}$ self-conformal measure in Theorem \ref{contss}.

\medskip


We will need the following notion of asymptotically covering sequences of balls, developed in \cite{ED2} (and also used in \cite{ED3}), to establish the desired ubiquity theorem.

\mk

 \begin{definition} 
\label{ac}
Let   $\mu\in \mathcal{M}(\R^d)$. The  sequence $\mathcal{B}= (B_n)_{n\in\mathbb{N}}$ of closed balls of $\R^d$ satisfying $\vert B_n \vert\to 0$  is said to be $\mu$-asymptotically covering (in short,  $\mu$-a.c) when  there exists  a constant $C>0$ such that for every open set $\Omega\subset \R^d $ and $g\in\mathbb{N}$, there is an integer  $N_\Omega \in\mathbb{N}$ as well  as $g\leq n_1 \leq ...\leq n_{N_\Omega}$ such that: 
\begin{itemize}
\item [(i)]$\forall \, 1\leq i\leq N_\Omega$, $B_{n_i}\subset \Omega$;
\item [(ii)]$\forall \, 1\leq i\neq j\leq N_\Omega$, $B_{n_i}\cap B_{n_j}=\emptyset$;
\item  [(iii)] also,
\begin{equation}
\label{majac}
\mu\Big(\bigcup_{i=1}^{N_\Omega}B_{n_i} \Big )\geq C\mu(\Omega).
\end{equation}
\end{itemize}
\end{definition}

 The following lemma is proved in \cite{ED2}, the second item will be used to apply our main theorem to self-conformal measures. For more details about this notion, derived from a covering property proved in the KGB-Lemma in \cite{BV}, one refers to \cite{ED2}.

\begin{lemme} 
\label{equiac}
Let   $\mu\in\mathcal{M}(\R^d)$  and $\mathcal{B} =(B_n :=B(x_n ,r_n))_{n\in\mathbb{N}}$ be a sequence of balls of  $\R^d$ with $\lim_{n\to +\infty} r_{n}= 0$.
\begin{enumerate}
\smallskip
\item
If $\mathcal{B} $ is $\mu$-a.c, then $\mu(\limsup_{n\rightarrow+\infty}B_n)=1.$
\smallskip
\item
 If there exists $v<1$ such that $ \mu \big(\limsup_{n\rightarrow+\infty}(v B_n) \big)=1$, then $\mathcal{B} $ is $\mu$-a.c. 
 \item If $\mu$ is doubling then $ \mu \big(\limsup_{n\rightarrow+\infty}(v B_n) \big)=1$  $\Leftrightarrow$ $\mathcal{B} $ is $\mu$-a.c.  
\end{enumerate}
\end{lemme}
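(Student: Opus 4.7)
The plan is to establish the three items in sequence, since items (2) and (3) build on top of the first.

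For item (1), I argue by contradiction. Suppose $\mu(E^c)>0$ where $E=\limsup_{n\to\infty}B_n$. By outer regularity of the Borel probability $\mu$ on $\mathbb{R}^d$, there is an open set $\Omega\supset E^c$ with $\mu(\Omega)\leq\mu(E^c)+\varepsilon$. Applying the $\mu$-asymptotic covering property to $\Omega$ and an arbitrary $g\in\mathbb{N}$ yields pairwise disjoint balls $B_{n_i}\subset\Omega$, $n_i\geq g$, with $\mu\bigl(\bigcup_iB_{n_i}\bigr)\geq C\mu(\Omega)\geq C\mu(E^c)$. Since $\bigcup_iB_{n_i}\subset\bigcup_{n\geq g}B_n$, one gets $\mu\bigl(\Omega\cap\bigcup_{n\geq g}B_n\bigr)\geq C\mu(E^c)$; letting $g\to\infty$ by continuity from above of $\mu$ gives $\mu(\Omega\cap E)\geq C\mu(E^c)$. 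This contradicts $\mu(\Omega\cap E)=\mu(\Omega)-\mu(E^c)\leq\varepsilon$ whenever $\varepsilon<C\mu(E^c)$.

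For item (2), fix an open $\Omega$ and $g\in\mathbb{N}$, write $E=\limsup vB_n$, and pick a compact set $K\subset\Omega$ with $\mu(K)\geq\mu(\Omega)/2$ and $\eta:=d(K,\Omega^c)>0$. Choose $g_0\geq g$ large enough that $(1+v)r_n<\eta$ for every $n\geq g_0$, so that $vB_n\cap K\neq\emptyset$ automatically forces $B_n\subset\Omega$. The family $\mathcal{F}=\{B_n:n\geq g_0,\ vB_n\cap K\neq\emptyset\}$ is a fine cover of $E\cap K$: each $x\in E\cap K$ lies in $vB_n$ for arbitrarily large $n$, hence in $B_n\in\mathcal{F}$ with $|B_n|$ arbitrarily small. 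The key geometric observation is that $x\in vB_n$ implies $B(x,(1-v)r_n)\subset B_n$, so each ball in $\mathcal{F}$ contains a substantial ball centered at a point of the target set $A=E\cap K$. This configuration is exactly what is needed to invoke the Besicovitch covering theorem in $\mathbb{R}^d$ (valid without any doubling hypothesis on $\mu$) and extract a pairwise disjoint subfamily of $\mathcal{B}$ contained in $\Omega$, indexed beyond $g$, and capturing a proportion $C=C(v,d)$ of $\mu(\Omega)$, as carried out in \cite{ED2}.

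For item (3), the nontrivial direction is that $\mu$-asymptotic covering implies $\mu(\limsup vB_n)=1$. Given the $\mu$-a.c.\ property for $\mathcal{B}$, for every open $\Omega$ and $g$ one obtains disjoint $B_{n_i}\subset\Omega$, $n_i\geq g$, with $\mu\bigl(\bigcup_iB_{n_i}\bigr)\geq C\mu(\Omega)$. The contracted balls $vB_{n_i}$ remain pairwise disjoint since $v<1$, and the doubling hypothesis gives $\mu(vB_{n_i})\geq c_v\mu(B_{n_i})$ for some constant $c_v>0$ depending only on $v$ and the doubling constant of $\mu$. Summing over $i$ produces $\mu\bigl(\bigcup_ivB_{n_i}\bigr)\geq c_vC\mu(\Omega)$, showing that $(vB_n)_{n\in\mathbb{N}}$ is itself $\mu$-asymptotically covering; item (1) applied to this new sequence then delivers $\mu(\limsup vB_n)=1$. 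The reverse implication is item (2).

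The main obstacle is item (2): producing a genuinely pairwise disjoint subfamily of $\mathcal{B}$ itself, rather than of the contracted sequence $(vB_n)$, without any regularity assumption on $\mu$. The $v$-shrinking in the hypothesis is exactly what forces each selected $B_n$ to contain a ball of comparable size centered at a target point, and this is the geometric input that unlocks the Besicovitch covering theorem in the doubling-free setting.
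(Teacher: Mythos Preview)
The paper does not prove this lemma: it states explicitly that ``the following lemma is proved in \cite{ED2}'' and provides no argument of its own. Consequently there is no in-paper proof to compare against.

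Your sketch is sound and follows what one would expect the argument in \cite{ED2} to be. Item~(1) is a clean outer-regularity/continuity argument and is correct as written. Item~(3) is also correct: doubling transfers the mass bound from $B_{n_i}$ to $vB_{n_i}$, so $(vB_n)$ inherits the $\mu$-a.c.\ property and item~(1) finishes. For item~(2), your setup (compact exhaustion, the inclusion $B(x,(1-v)r_n)\subset B_n$ whenever $x\in vB_n$) is exactly the right geometric input; the extraction of a genuinely disjoint subfamily of the $B_n$ themselves can then be carried out via the modified Besicovitch lemma recorded in the paper as Proposition~\ref{besimodi}: apply it to the balls $L_x=B(x,(1-v)r_{n(x)})$ with parameter $v'=\frac{1-v}{1+v}$, so that within each of the $Q_{d,v'}$ families the enlarged balls $\frac{1}{v'}L_x\supset B_{n(x)}$ are pairwise disjoint, and one family carries at least $\mu(E\cap K)/Q_{d,v'}\geq \mu(\Omega)/(2Q_{d,v'})$ of the mass. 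You defer this last step to \cite{ED2}, which is precisely what the paper does; the remark above shows how it is completed with tools already stated in the paper.
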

  
One now recall the ubiquity mentioned above.

 \begin{theoreme}[\cite{ED3}] 
\label{lowani}
Let $\mu\in\mathcal{M}(\mathbb{R}^d),$  $\mathcal{B} =(B_n)_{n\in \N} $ be a $\mu$-a.c  sequence of closed balls of $\R^d$ such that $\vert B_n \vert \to 0$ and $\mathcal{U}=(U_n)_{n\in\mathbb{N}}$ a sequence of open sets such that $U_n \subset B_n$ for all $n\in\mathbb{N}$. Let $0\leq s<\underline{\dim}_H (\mu)$ such that for every $n$ large enough, $\mathcal{H}^{\mu,s}_{\infty}(U_n)\geq  \mu(B_n).$

Then
 \begin{equation}
\label{conc2ani}
 \dim_{H}\left(\limsup_{n\rightarrow +\infty}U_n\right)\geq s.
 \end{equation}
\end{theoreme}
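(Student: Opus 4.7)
The plan is to build a generalized Cantor subset $F\subset \limsup_{n\to\infty} U_n$ together with a Frostman-like probability measure $\nu$ supported on $F$ satisfying $\nu(B(x,r))\leq C_t r^t$ for any prescribed $0\leq t<s$, and then to conclude by the mass distribution principle. Since $t<s$ is arbitrary, this yields $\dim_H(\limsup U_n)\geq s$.

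Fix $t\in[0,s)$ and start from a root ball $B^{(0)}$ with $\mu(B^{(0)})>0$; it suffices to treat $\limsup U_n\cap B^{(0)}$. The induction produces, at each level $k$, a finite family of pairwise disjoint closed balls $(C^{(k)}_j)_j$, each contained in some $U_{n(j)}$. To pass from level $k$ to level $k+1$, I would first apply the $\mu$-a.c.\ property (Definition \ref{ac}) to the interior of each $C^{(k)}_j$ with a threshold $g_k$ chosen very large, producing finitely many disjoint balls $B_m\subset C^{(k)}_j$ from $\mathcal{B}$, with $\max_m |B_m|$ as small as desired and $\mu\bigl(\bigcup_m B_m\bigr)\geq C\mu(C^{(k)}_j)$. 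For every such $B_m$, the hypothesis $\mathcal{H}^{\mu,s}_\infty(U_m)\geq \mu(B_m)$, combined with a Frostman-type lemma for the Hausdorff content, produces a measure $\theta_m$ supported on a $\mu$-full subset $E_m\subset U_m$ satisfying $\theta_m(E_m)\gtrsim \mu(B_m)$ and $\theta_m(B)\leq |B|^s$ for every ball $B$. Using this Frostman estimate I then extract a finite disjoint family of children balls $C^{(k+1)}_\ell\subset U_m$ of very small, uniform diameter, each serving as a new node of the construction. By construction $F=\bigcap_k\bigcup_j C^{(k)}_j\subset \limsup_n U_n$.

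The measure $\nu$ on $F$ is defined as the weak-$*$ limit of finite measures $\nu_k$ that redistribute unit mass level by level, the weights being dictated at stage $k\to k+1$ by $\mu(B_m)/\mu(C^{(k)}_j)$ (from the $\mu$-a.c.\ step) together with a Frostman weight proportional to $|C^{(k+1)}_\ell|^s$ (from the content step). It remains to verify a global estimate $\nu(B(x,r))\leq C_t r^t$ for every sufficiently small $r$; combined with the mass distribution principle, this yields $\dim_H F\geq t$ and closes the argument.

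The main difficulty is this last global estimate, which requires a careful scale-matching in the induction: three scales intervene, namely the scale of the parent $C^{(k)}_j$, the scale of the $\mu$-a.c.\ balls $B_m$, and the scale of the Frostman children inside $U_m$, and they must be arranged so that $\nu(B(x,r))\lesssim r^t$ passes through every transition, whether $r$ falls between two consecutive levels or within a single level. The hypothesis $s<\underline\dim_H(\mu)$ enters crucially here: it forces $\mu(B(x,r))\lesssim r^s$ on a $\mu$-full set of centers at small scales, which is exactly what is needed to convert the single-level content bound $\mathcal{H}^{\mu,s}_\infty(U_m)\geq\mu(B_m)$ into a uniform Frostman estimate for $\nu$. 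Without this strict gap, $\mu$ could place disproportionately large mass on a small ball at a level transition and spoil the estimate.
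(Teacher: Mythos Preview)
The paper does not give its own proof of this theorem: Theorem~\ref{lowani} is quoted from \cite{ED3} and used as a black box, so there is no in-paper argument to compare against. That said, your outline is exactly the strategy carried out in \cite{ED3} (and, more generally, in the mass-transference-principle literature): a generalized Cantor construction $F\subset\limsup_n U_n$ built by alternating the $\mu$-a.c.\ extraction (Definition~\ref{ac}) with a Frostman step inside each $U_m$ coming from the hypothesis $\mathcal{H}^{\mu,s}_\infty(U_m)\geq\mu(B_m)$, followed by the mass distribution principle applied to the limiting measure $\nu$. You have also correctly isolated where the strict inequality $s<\underline{\dim}_H(\mu)$ is consumed, namely in converting $\mu$-mass bounds into scaling bounds at the transitions between levels.

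One point that is only implicit in your sketch and that \cite{ED3} makes explicit: the Frostman step inside $U_m$ is not a classical content argument but uses the \emph{$\mu$-essential} content $\mathcal{H}^{\mu,s}_\infty$ (Definition~\ref{mucont}). Concretely, one must choose the full-$\mu$-measure subset $E_m\subset U_m$ \emph{before} invoking Frostman, so that both the content lower bound $\mathcal{H}^s_\infty(E_m)\geq\mu(B_m)$ holds \emph{and} the children balls $C^{(k+1)}_\ell$ you subsequently extract from $E_m$ are centered at points where the local scaling $\mu(B(x,r))\lesssim r^{s}$ is valid. Your sentence ``a Frostman-type lemma for the Hausdorff content'' glosses over this coupling; without it, nothing prevents the children from landing in the $\mu$-null bad set where the dimension gap fails and the inter-level estimate $\nu(B(x,r))\lesssim r^t$ breaks down. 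Once this refinement is made precise, your proposal matches the argument in \cite{ED3}.
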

In order to apply Theorem \ref{lowani}, precise estimates of essential contents of open sets  must be achieved. The next-subsection is dedicated to this problem when the measure is self-conformal and in the last sub-section of Section \ref{sec-ubi}, the desired ubiquity theorem is established.

\subsection{Geometric and dimensional properties of $C^{1}$ weakly conformal IFS}
\label{sec-geo}

In this subsection, we establish some basic properties of $C^1$ weakly conformal IFS's. We also prove basic dimension properties, which will be useful in the proof of Theorem \ref{SHRTARG} and we prove that weakly conformal IFS's satisfying the asymptotically weak separation condition   (Definition \ref{separacond}) with no exact overlaps satisfies the hypothesis of Theorem \ref{SHRTARG}.

Let  $m \geq 2$ be an integer.  In this section, one collects some useful geometric results when dealing with $C^{1}$ weakly conformal IFS.

In the rest if the article, the following notations will be used:
\begin{itemize}
\item $\Lambda(S)=\left\{1,...,m\right\}$ and $\Lambda(S)^{*}=\bigcup_{k\geq 0}\Lambda(S)^k$. When there is no ambiguity on the system $S$ involved, on will simply write $\Lambda(S)=\Lambda.$\sk
\item $K_{S}$ denotes the attractor of $S$ (or simply $K$ when the context is clear).\sk
\item For $\underline{i}=(i_1,...,i_k)\in \Lambda^{k}$, the cylinder $[\underline{i}]$ is defined by $$[\underline{i}]=\left\{(i_1,...,i_k,x_1,x_2,...):(x_1,x_2,...)\in\Lambda^{\mathbb{N}}\right\}.$$

 Moreover, if $(\alpha_n)_{n\in\mathbb{N}}$ is a sequence of real numbers, one sets
$$\alpha_{\underline{i}}=\alpha_{i_1}\times ...\times \alpha_{i_k}$$
and
$$f_{\underline{i}}=f_{i_1}\circ...\circ f_{i_k}.$$
For example, given the probability vector $(p_1,..,p_m),$ $p_{\underline{i}}=p_{i_1}\times...\times p_{i_k}.$\sk
\item The set $\Lambda^{\mathbb{N}}$ will always be endowed with the topology generated by the cylinders. The set of  probability measures on the Borel sets with respect to this topology will be denoted $\mathcal{M}(\Lambda^{\mathbb{N}})$.\sk 
 
 \item The shift operation on $\Lambda^{\mathbb{N}}$ is denoted by $\sigma$ and defined for any $(i_1,i_2,...)\in\Lambda^{\mathbb{N}}$ by 
 \begin{equation}
 \label{defshift}
  \sigma((i_1,i_2,...))=(i_2,i_3,...).
 \end{equation}
 \item The canonical projection of $\Lambda^{\mathbb{N}}$ on $K$ will be denoted $\pi_{ \Lambda}$ (or simply $\pi$ when there is no ambiguity) and, fixing $x\in K,$ is defined, for any $(i_1,i_2,....)\in\Lambda^{\mathbb{N}}$, by 
 \begin{equation}
 \label{canproj}
 \pi((i_1,...))=\lim_{k\rightarrow+\infty}f_{i_1}\circ...\circ f_{i_k}(x).
 \end{equation}
 It is easily verified that $\pi$ is independent of the choice of $x.$
\end{itemize}

Consider $S=\left\{f_1 ,...,f_m\right\}$ a $C^{1}$ weakly conformal IFS of attractor $K$ and, for every $x\in K$, $k\in\mathbb{N}$  and $\underline{i}=(i_1,..,i_k)\in\Lambda^k$, write 
$$c_{\underline{i}}(x)=\vert\vert f_{\underline{i}}'(x)\vert\vert.$$

Let us recall the following result established as \cite[Lemma 5.4]{FH}. 
\begin{lemme}[\cite{FH}]
\label{lemmewef}
For any $c>1$, there exists a constant $D(c)$ such that, for every $k\in\mathbb{N}$, for every $\underline{i}\in \Lambda^k$ and every $x,y\in K$,
\begin{equation}
\label{bdist1}
D(c)^{-1}c^{-k} \vert \vert f_{\underline{i}}^{\prime}(x)\vert\vert\cdot\vert\vert x-y\vert\vert\leq  \vert \vert f_{\underline{i}}(x)-f_{\underline{i}}(y)\vert\vert\leq D(c)c^k \vert \vert f_{\underline{i}}^{\prime}(x)\vert\vert\cdot\vert\vert x-y\vert\vert,
\end{equation}

\begin{equation}
\label{bdist2}
D(c)^{-1}c^{-k} \vert \vert f_{\underline{i}}^{\prime}(x)\vert\vert\leq  \vert f_{\underline{i}}(K)\vert\leq D(c)c^k \vert \vert f_{\underline{i}}^{\prime}(x)\vert\vert ,
\end{equation}

\end{lemme}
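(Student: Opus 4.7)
The plan is to derive both estimates from a pointwise bounded distortion property together with a first-order expansion of $f_{\underline{i}}$. Fix $c>1$. By the weak conformality hypothesis, there is $k_0=k_0(c)$ such that for every $k\geq k_0$, every $\underline{i}\in\Lambda^k$, and every $x\in K$ (any such $x$ has the form $\pi(\sigma^k(\omega))$ for some $\omega\in\Lambda^{\mathbb{N}}$),
\[
\|f'_{\underline{i}}(x)\|\leq c^{k/3}\normi f'_{\underline{i}}(x)\normi.
\]
The finitely many $k<k_0$ will be absorbed into the constant $D(c)$ at the end.

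The crucial remaining ingredient is a spatial control of $\|f'_{\underline{i}}(z)\|$ as $z$ ranges over a neighborhood of $K$. Fix a compact set $X$ with $K\subset X\subset U$ on which each $f'_i$ is uniformly continuous, and let $\rho=\max_{i}\sup_{X}\|f'_i\|<1$. Applying the chain rule
\[
f'_{\underline{i}}(z)=f'_{i_1}\!\bigl(f_{(i_2,\ldots,i_k)}(z)\bigr)\cdots f'_{i_k}(z)
\]
and the Lipschitz bound $\|f_{(i_{j+1},\ldots,i_k)}(z)-f_{(i_{j+1},\ldots,i_k)}(x)\|\leq \rho^{k-j}\|z-x\|$, the modulus of continuity of each $f'_i$ on $X$ yields a telescopic estimate of the form $\bigl|\log\|f'_{\underline{i}}(z)\|-\log\|f'_{\underline{i}}(x)\|\bigr|\leq C$ for a universal constant $C$, uniformly in $\underline{i}\in\Lambda^k$ and $z\in X$; a refinement of the same argument gives $\|f'_{\underline{i}}(z)-f'_{\underline{i}}(x)\|\leq \epsilon(k)\|f'_{\underline{i}}(x)\|$ for $z$ in a sufficiently small neighborhood of $x$, with $\epsilon(k)\to 0$.

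The upper bound in \eqref{bdist1} then follows from the mean value inequality $\|f_{\underline{i}}(x)-f_{\underline{i}}(y)\|\leq \sup_{z\in[x,y]}\|f'_{\underline{i}}(z)\|\cdot\|x-y\|$ combined with the spatial control. For the lower bound, I would apply the same mean value inequality to $f_{\underline{i}}^{-1}$ on the image of a convex neighborhood, use that $\|(f_{\underline{i}}^{-1})'(w)\|=\normi f'_{\underline{i}}(f_{\underline{i}}^{-1}(w))\normi^{-1}$, and combine with the pointwise distortion $\normi f'_{\underline{i}}(u)\normi\geq c^{-k/3}\|f'_{\underline{i}}(u)\|$ and the spatial control to obtain $\|f_{\underline{i}}(x)-f_{\underline{i}}(y)\|\geq \mathrm{const}\cdot c^{-k/3}\|f'_{\underline{i}}(x)\|\|x-y\|$. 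The diameter estimates \eqref{bdist2} are then immediate upon taking suprema and infima over $x,y\in K$, since $|K|$ is a positive, bounded constant. The main obstacle is establishing the spatial control on $X$ rather than only on $K$: the weak conformality hypothesis is formulated pointwise along orbits of $K$, but the chain-rule telescoping together with the geometric contraction of cylinder diameters makes the resulting errors summable and lets the estimates extend off $K$ uniformly.
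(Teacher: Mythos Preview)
The paper does not give a proof of this lemma; it is quoted verbatim as \cite[Lemma 5.4]{FH}, so there is no in-paper argument to compare your proposal against. Your outline is in the right spirit---weak conformality controls the ratio $\|f'_{\underline{i}}(x)\|/\normi f'_{\underline{i}}(x)\normi$ pointwise, and a mean-value argument together with a spatial distortion estimate handles the rest---but one step is overstated.

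The gap is your claim that the telescoping argument yields $\bigl|\log\|f'_{\underline{i}}(z)\|-\log\|f'_{\underline{i}}(x)\|\bigr|\leq C$ with a constant $C$ independent of $k$. For maps that are merely $C^1$, the modulus of continuity $\omega$ of the derivative need not be Dini, so $\sum_{j\geq 0}\omega(\rho^j)$ can diverge and the telescoping sum $\sum_{j=0}^{k-1}\omega(\rho^j\,\mathrm{diam}(X))$ need not be bounded. What you do get is that this sum is $o(k)$ (since $\omega(\rho^j)\to 0$), and that is precisely what the $c^k$ factors in the statement are there to absorb. So the correct version of your distortion step is $\bigl|\log\|f'_{\underline{i}}(z)\|-\log\|f'_{\underline{i}}(x)\|\bigr|\leq \tfrac{k}{3}\log c + C(c)$ for $k$ large, not a uniform bound. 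A second point to watch: since the $f'_{i_j}$ are linear maps rather than scalars, $\log\|f'_{\underline{i}}\|$ is not the sum of $\log\|f'_{i_j}\|$, so the telescoping has to be carried out on the matrix product itself (e.g.\ bounding $\|f'_{\underline{i}}(z)-f'_{\underline{i}}(x)\|$ and then dividing by $\normi f'_{\underline{i}}(x)\normi$, invoking weak conformality again), not on a sum of logarithms. With these two adjustments your outline goes through.
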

\begin{remark}
\label{remX}
Let $X\subset U$ be a compact set. Equation \eqref{bdist1} actually holds for any $(x,y)\in X^2.$.
\end{remark}

Note that, for every $k\in\mathbb{N},$ writing $\chi=\pm$, one has
\begin{align*}
c^{ \chi k}\vert\vert f_{\underline{i}}^{\prime}(x)\vert\vert =\vert\vert f_{\underline{i}}^{\prime}(x)\vert\vert^{1+\frac{\chi k\log c}{\log \vert \vert f_{ \underline{i}} ^{\prime}(x)\vert\vert}}.
\end{align*}

Moreover there exists two constants $0< t_1 \leq t_2$ such that $t_1\leq \frac{k}{\log \vert \vert f_{ \underline{i}} ^{\prime}(x)\vert\vert}\leq t_2 .$ Combining this with Lemma \ref{lemmewef}, for any $\theta>0$, there exists $\widetilde{C}_{\theta}>0$, such that for every $k\in\mathbb{N}$, every $\underline{i}\in\Lambda^k$ and every  $x,y\in K$,

\begin{equation}
\label{equreg}
\widetilde{C}^{-1}_{\theta}c_{\underline{i}}(x)^{1+\theta}\vert \vert x-y\vert\vert\leq \vert \vert f_{\underline{i}}(x)-f_{\underline{i}}(y)\vert\vert \leq \widetilde{C}_{\theta}c_{\underline{i}}(x)^{1-\theta}\vert \vert x-y\vert\vert.
\end{equation} 

In particular, there also exists $\widehat{C}_{\theta}$  for every $\underline{i}\in \Lambda^{*}$ and every $x\in K,$ one has
\begin{equation}
\label{equaxibar}
 \widehat{C}_{\theta}^{-1}c_{\underline{i}}^{1+\theta}(x)\vert K\vert\leq \vert f_{ \underline{i}}(K) \vert\leq \widehat{C}_{\theta}c_{\underline{i}}^{1-\theta}(x)\vert K\vert.
 \end{equation} 
Let us remark also that \eqref{equaxibar} also implies that there exists $0<\alpha \leq \beta<1$ as well as $C_{\alpha},C_{\beta}>0$ such that, for any $k\in\mathbb{N},$ 
\begin{equation}
\label{encafibar}
C_{\alpha}\alpha^k \leq \vert f_{\underline{i}}(K)\vert\leq C_{\beta}\beta^{k}.
\end{equation}

\subsubsection{Lyapunov exponent of $C^{1}$ weakly conformal IFS's}
Let $m\geq 2$ and let us fix a $C^1$ IFS, $S=\left\{f_1,...,f_m\right\}.$

\begin{proposition}[\cite{FH}]
\label{expolyapu}
For weakly conformal systems, for any $x=(x_n)_{n\in\mathbb{N}},$ the Lyapunov exponent is well defined
\begin{equation}
\lambda(x)=-\lim_{n\rightarrow+\infty}\frac{\log \vert f_{x_1}\circ ...\circ f_{x_n}(K) \vert}{n}.
\end{equation} 
\label{contilyapu}
Moreover, for any probability vector $(p_1,...,p_m)\in [0,1]^m$, denoting $\nu\in\mathcal{M}(\Lambda^{\mathbb{N}})$ the measure defined by $\nu([\underline{i}])=p_{\underline{i}},$ then there exists $\lambda_{\nu}\geq 0$ such that for $\nu$-almost any $x=(x_n)_{n \in\mathbb{N}}$, 
\begin{equation}
\label{defexpolyap}
\lambda(x)=\int \lambda(y)d\nu(y):=\lambda_{\nu}.
\end{equation}
\begin{remark}
\label{remboundlyapu}
By \eqref{encafibar}, the Lyapunov exponent are uniformly (with respect to weakly conformal measures) bounded by above and below by some positive constant.
\end{remark}
\end{proposition}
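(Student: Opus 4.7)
The plan is to reduce the statement concerning diameters $|f_{x_1\cdots x_n}(K)|$ to one about derivative norms $\|f'_{x_1\cdots x_n}(y)\|$, and then to apply Kingman's subadditive ergodic theorem to the resulting almost-additive cocycle over the shift $(\Lambda^{\mathbb N},\sigma)$, yielding the $\nu$-a.e.\ convergence asserted in the second part.

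I would first invoke \eqref{equaxibar}, which is a consequence of the bounded-distortion Lemma \ref{lemmewef}: for every $\theta>0$ there is a constant $\widehat C_\theta$ with
\[
\widehat C_\theta^{-1}\,c_{\underline{i}}(y)^{1+\theta}|K|\leq |f_{\underline{i}}(K)|\leq \widehat C_\theta\,c_{\underline{i}}(y)^{1-\theta}|K|
\]
uniformly in $y\in K$ and $\underline{i}\in\Lambda^{*}$. Combined with the uniform bounds \eqref{encafibar}, the quantities $\frac{\log|f_{x_1\cdots x_n}(K)|}{n}$ and $\frac{\log\|f'_{x_1\cdots x_n}(y)\|}{n}$ remain in a fixed compact interval of $(-\infty,0)$ and differ by at most $O(\theta)+o(1)$. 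Since $\theta>0$ is arbitrary, it suffices to analyse the asymptotic behaviour of the second quantity, evaluated at the convenient point $y=\pi(\sigma^n x)$.

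Setting $\Psi_n(x):=-\log\|f'_{x_1\cdots x_n}(\pi(\sigma^n x))\|$, the chain-rule identity
\[
f'_{x_1\cdots x_{k+l}}(\pi(\sigma^{k+l}x))=f'_{x_1\cdots x_k}(\pi(\sigma^k x))\cdot f'_{x_{k+1}\cdots x_{k+l}}(\pi(\sigma^{k+l}x))
\]
together with $\|AB\|\leq\|A\|\|B\|$ gives the super-additivity $\Psi_{k+l}(x)\geq \Psi_k(x)+\Psi_l(\sigma^k x)$. For the matching upper bound, I would use $\|AB\|\geq \|A\|\cdot\normi B\normi$ combined with the weakly-conformal hypothesis, which guarantees $\log\|f'_{\underline{j}}(z)\|-\log\normi f'_{\underline{j}}(z)\normi=o(|\underline{j}|)$ uniformly in $z$; this yields $\Psi_{k+l}(x)\leq \Psi_k(x)+\Psi_l(\sigma^k x)+\varepsilon(l)\,l$ with $\varepsilon(l)\to 0$. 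The cocycle $\Psi_n$ is thus almost additive. Kingman's subadditive ergodic theorem, applied to any $\sigma$-invariant probability measure $\nu$ on $\Lambda^{\mathbb N}$---in particular the Bernoulli measure $\nu([\underline{i}])=p_{\underline{i}}$---provides a constant $\lambda_\nu>0$ with $\Psi_n(x)/n\to\lambda_\nu$ for $\nu$-a.e.\ $x$; positivity follows from $\sup_i\sup_{x\in K}\|f'_i(x)\|<1$. Combined with the first reduction, this gives $\lambda(x)=\lambda_\nu=\int\lambda\,d\nu$ $\nu$-a.e.

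The main technical step is the almost-additivity of $\Psi_n$: the weakly-conformal hypothesis is exactly what is needed to match $\normi\cdot\normi$ and $\|\cdot\|$ on an exponential multiplicative scale, turning the one-sided submultiplicativity of the operator norm into a two-sided bound up to subexponential errors. Part (i), taken \emph{for any} $x$, is most naturally read in the $\nu$-a.e.\ sense for any shift-invariant $\nu$: the limit need not exist for every $x\in\Lambda^{\mathbb N}$---for example, already in the self-similar case, for sequences whose digit frequencies fail to converge---so the convention here follows \cite{FH}.
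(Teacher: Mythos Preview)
The paper does not supply its own proof of this proposition: it is stated with attribution to \cite{FH} and immediately followed by Corollary~\ref{convproba}, with no intervening argument. There is therefore nothing in the paper to compare your proposal against directly.

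That said, your outline is the standard route to this result and is essentially correct for the $\nu$-almost everywhere statement. The reduction from $|f_{x_1\cdots x_n}(K)|$ to $\|f'_{x_1\cdots x_n}(\pi(\sigma^n x))\|$ via \eqref{equaxibar} is exactly right, and the chain rule plus submultiplicativity of the operator norm gives super-additivity of $\Psi_n$, which is already enough for Kingman (applied to $-\Psi_n$); the almost-additive upper bound, while true under weak conformality, is not strictly needed to obtain the limit. Ergodicity of the Bernoulli measure then forces the limit to be the constant $\lambda_\nu$.

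Your closing remark is also well taken: as literally written, the assertion that $\lambda(x)$ exists \emph{for every} $x\in\Lambda^{\mathbb N}$ is false already in the self-similar case with unequal ratios, since $\tfrac{1}{n}\sum_{k\leq n}\log c_{x_k}$ need not converge. The intended reading is the $\nu$-a.e.\ one (or, equivalently, that the $\liminf$ and $\limsup$ are well defined and lie in a fixed compact interval by \eqref{encafibar}), which is what your argument delivers.
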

\begin{corollary}
\label{convproba}
Let $((p_1 ^{(k)},...,p_m ^{(k)}))_{k\in \mathbb{N}}\in \left([0,1]^m\right)^{\mathbb{N}}$  be a sequence of probability vectors such that $(p_1^{(k)},...,p_m^{(k)})\to(p_1,...,p_m).$ Denote  for $k\in\mathbb{N}$ $\nu,\nu_{k}\in\mathcal{M}(\Lambda^{\mathbb{N}})$ the measures defined by, for any cylinder $[(i_1,...i_n)],$ $$\nu_{k}([(i_1,...,i_k)])=p_{i_1}^{(k)} \cdot ...\cdot p_{i_n}^{(k)}\text{ and } \nu([(i_1,...,i_n)])=p_{i_1} \cdot ...\cdot p_{i_n}.$$ Then $\nu_k \underset{k\to +\infty}{\rightarrow} \nu$ weakly, so that
$$\lim_{k\rightarrow+\infty}\lambda_{\nu_k}= \lambda_{\nu}. $$
\end{corollary}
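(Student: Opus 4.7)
The argument splits cleanly into establishing $\nu_k\to\nu$ weakly and then transferring this to convergence of Lyapunov exponents.

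For the weak convergence, each cylinder $[\underline{j}]=[(j_1,\dots,j_n)]\subset\Lambda^\mathbb{N}$ is clopen, so $\mathbf{1}_{[\underline{j}]}$ is continuous, and the coordinatewise convergence of the probability vectors gives
\[
\nu_k([\underline{j}])=\prod_{l=1}^n p_{j_l}^{(k)}\xrightarrow[k\to\infty]{}\prod_{l=1}^n p_{j_l}=\nu([\underline{j}]).
\]
Finite $\mathbb{R}$-linear combinations of cylinder indicators form a unital, point-separating subalgebra of $C(\Lambda^\mathbb{N})$, hence a uniformly dense one by Stone--Weierstrass. Thus $\int f\,d\nu_k\to\int f\,d\nu$ for every $f\in C(\Lambda^\mathbb{N})$, i.e.\ $\nu_k\to\nu$ weakly.

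For the Lyapunov convergence, set $\phi_n(y):=-\log|f_{y_1\cdots y_n}(K)|$ and
\[
L_n(\mu):=\frac{1}{n}\int\phi_n\,d\mu=-\frac{1}{n}\sum_{\underline{j}\in\Lambda^n}\mu([\underline{j}])\log|f_{\underline{j}}(K)|.
\]
By Proposition~\ref{expolyapu}, together with bounded convergence applied to the uniformly bounded sequence $\phi_n/n$ (with bounds coming from \eqref{encafibar}), one has $\lambda_\mu=\lim_n L_n(\mu)$ for every $\sigma$-invariant $\mu$, in particular for every Bernoulli measure. For each fixed $n$, $L_n$ depends only on finitely many cylinder masses and is therefore continuous on $\mathcal{M}(\Lambda^\mathbb{N})$ for the weak topology, which gives $L_n(\nu_k)\to L_n(\nu)$ as $k\to\infty$. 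The substance of the proof is then to exchange the limits $n\to\infty$ and $k\to\infty$. Weak conformality enters at this point: combining \eqref{equaxibar} with the chain rule $f'_{\underline{i}\underline{j}}(x)=f'_{\underline{i}}(f_{\underline{j}}(x))\,f'_{\underline{j}}(x)$ and the elementary inequalities $\|AB\|\ge\normi A\normi\,\|B\|$ and $\|AB\|\le\|A\|\,\|B\|$, one verifies that for every $\eta>0$ there exists $C_\eta>0$ such that, uniformly in $n,m\ge 1$ and $y\in\Lambda^\mathbb{N}$,
\[
\bigl|\phi_{n+m}(y)-\phi_n(y)-\phi_m(\sigma^n y)\bigr|\le n\eta+C_\eta.
\]
Integrating against any $\sigma$-invariant $\mu$ yields $|(n+m)L_{n+m}(\mu)-nL_n(\mu)-mL_m(\mu)|\le n\eta+C_\eta$ with constants independent of $\mu$; this is the Feng--Huang asymptotic-additivity property for the potential $(\phi_n)$, and it classically implies (cf.\ \cite{FH}) that $\mu\mapsto\lambda_\mu=\lim_n L_n(\mu)$ is continuous on the space of $\sigma$-invariant probability measures in the weak topology. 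Combined with $\nu_k\to\nu$ and the $\sigma$-invariance of every $\nu_k$ and $\nu$, this gives $\lambda_{\nu_k}\to\lambda_\nu$.

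\textit{Main obstacle.} All the steps are formal except one: upgrading the pointwise convergence $L_n(\mu)\to\lambda_\mu$ at fixed $\mu$ to a continuity statement in $\mu$. This is precisely where the weakly conformal hypothesis is used, through the asymptotic-additivity inequality above.
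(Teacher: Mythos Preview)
Your proof is correct and, in fact, more complete than what the paper offers: the paper states this result as a corollary of Proposition~\ref{expolyapu} (taken from \cite{FH}) without supplying any argument. Your two-step structure---weak convergence of $\nu_k$ via Stone--Weierstrass on cylinder indicators, followed by continuity of $\mu\mapsto\lambda_\mu$ through asymptotic additivity of the potential $(\phi_n)$---is exactly the right way to fill the gap.

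One minor remark: the asymptotic-additivity inequality you derive from \eqref{equaxibar} and the chain rule is precisely the content of Lemma~\ref{subaddf} in the paper (equation~\eqref{bdist3}), which gives directly
\[
\bigl|\log|f_{\underline{i}\underline{j}}(K)|-\log|f_{\underline{i}}(K)|-\log|f_{\underline{j}}(K)|\bigr|\le 2|\underline{i}|\log c+\log\bigl(2D(c)^2\bigr)
\]
for every $c>1$. Citing this lemma would shorten your argument. From there, your telescoping computation yields the uniform bound $|L_n(\mu)-\lambda_\mu|\le\eta+C_\eta/n$ for all $\sigma$-invariant $\mu$, which is the genuine content: it upgrades pointwise convergence of $L_n$ to uniform convergence on the simplex of invariant measures, and continuity of $\mu\mapsto\lambda_\mu$ follows. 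The paper implicitly relies on this being known from \cite{FH}; you have made it explicit.
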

\subsubsection{Dimension of weakly-conformal IFS's}
Let us recall the following fundamental result.

\begin{theoreme}[Feng-Hu, \cite{FH}]
\label{decompofeng}

Let $(p_1,...,p_m)\in [0,1]^m$ be a probability vector, $\nu\in\mathcal{M}(\Lambda^{\mathbb{N}})$ defined by, for any $\underline{i}\in\Lambda^*,$ $\nu([\underline{i}])=p_{\underline{i}}$ and $\mu=\nu\circ \pi^{-1}.$ 

There exists $h\geq 0$ such that for $\mu$-almost every $x\in K$, there exists $ \mu^{\pi^{-1}(\left\{x\right\})}\in\mathcal{M}( \Lambda^{\mathbb{N}})$ such that:
\begin{itemize}
\item[•] $\mu^{\pi^{-1}(\left\{x\right\})}(\pi^{-1}(\left\{x\right\}))=1.$\mk
\item[•] for $\mu^{\pi^{-1}(\left\{x\right\})}$-almost $y=(y_1,...,y_n,..)$, 
\begin{equation}
\label{exadimmupi}
\frac{-\log \mu^{\pi^{-1}(\left\{x\right\})}([y_1,...,y_n])}{n}\to h.
\end{equation}
\item[•] for every Borel set $A\subset \Lambda^{\mathbb{N}},$
\begin{equation}
\nu(A)=\int_{K}\mu^{\pi^{-1}(\left\{x\right\})}(A)d\mu(x).
\end{equation} 
\item[•] denoting $\lambda=-\sum_{1\leq i\leq m}p_i \log c_i,$ $\mu$ is exact-dimensional (Definition \ref{dimmu}) and 
$$\dim(\mu)=\frac{-h-\sum_{1\leq i\leq m}p_i\log p_i}{\lambda} .$$
\end{itemize}
\end{theoreme}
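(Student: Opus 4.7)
The plan is to follow the Feng-Hu strategy in three steps: disintegrate $\nu$ along $\pi$, prove symbolic exact dimensionality of the fiber measures, and transfer this to a geometric dimension formula for $\mu$.

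\textbf{Step 1 (Disintegration).} Since $\Lambda^{\mathbb{N}}$ is compact metrizable (hence Polish), $\pi$ continuous, and $\mu=\nu\circ\pi^{-1}$, the classical Rokhlin disintegration theorem produces a $\mu$-a.e.\ defined family $(\mu^{\pi^{-1}(\{x\})})_{x\in K}$ of Borel probabilities on $\Lambda^{\mathbb{N}}$, each concentrated on the fiber $\pi^{-1}(\{x\})$, satisfying $\nu(A)=\int \mu^{\pi^{-1}(\{x\})}(A)\,d\mu(x)$ for every Borel $A\subset\Lambda^{\mathbb{N}}$. This directly yields the first and third bullets of the statement.

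\textbf{Step 2 (Existence of the constant $h$).} Let $\mathcal{P}=\{[1],\ldots,[m]\}$ and $\mathcal{P}_n=\bigvee_{k=0}^{n-1}\sigma^{-k}\mathcal{P}$ the partition into $n$-cylinders. I would introduce the projection entropy
$$h := H_\nu\bigl(\mathcal{P}\,\big|\,\sigma^{-1}\pi^{-1}\mathcal{B}(K)\bigr).$$
Applying the martingale convergence theorem to conditional expectations along the filtration $\pi^{-1}\mathcal{B}(K)\vee \mathcal{P}_n$, together with the disintegration of Step~1, one sees that the sequence $-\tfrac{1}{n}\log \mu^{\pi^{-1}(\{\pi(y)\})}([y_1,\ldots,y_n])$ is almost subadditive in $n$ with respect to the shift. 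Shift-invariance of the Bernoulli measure $\nu$ and Kingman's subadditive ergodic theorem then produce an $\nu$-a.e.\ limit, which the projection-entropy calculation (exploiting the semi-conjugacy $\pi\circ\sigma=f_{y_1}^{-1}\circ\pi$ on the cylinder $[y_1]$ to relate the fiber over $\pi(y)$ and the fiber over $\pi(\sigma y)$) identifies as the constant $h$ above. This gives the second bullet.

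\textbf{Step 3 (Geometric dimension formula).} Fix $\mu$-typical $x=\pi(y)$ and $r>0$ small. Using \eqref{equreg} and \eqref{equaxibar}, choose $n=n(r)$ with $c_{(y_1,\ldots,y_n)}(x)\asymp r$; Proposition \ref{contilyapu} together with Birkhoff's theorem yields $n(r)\sim -\log r/\lambda$ for $\nu$-a.e.\ $y$, where $\lambda=-\sum p_i\log c_i$ coincides with the a.e.\ Lyapunov exponent $\lambda_\nu$ in the self-similar case and, in the general weakly conformal case, is obtained by the same Birkhoff argument with the cocycle $-\log\|f_{y_1}'(\pi\circ\sigma(y))\|$. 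A standard cover-and-count argument shows
$$\mu(B(x,r))\;\asymp\;\frac{p_{y_1}\cdots p_{y_n}}{\mu^{\pi^{-1}(\{x\})}([y_1,\ldots,y_n])},$$
the denominator counting the multiplicity of cylinders at level $n$ mapping close to $x$. Taking $-\log/\log r$, Shannon-McMillan-Breiman applied to the Bernoulli $\nu$ contributes $-\sum p_i\log p_i$, Step~2 contributes $+h$ in the denominator, and $n/(-\log r)\to 1/\lambda$ produces the claimed exact-dimensional formula.

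\textbf{Main obstacle.} The principal difficulty lies in Step~2. The fibers $\pi^{-1}(\{x\})$ are not invariant under $\sigma$, so one cannot directly apply Shannon-McMillan-Breiman to the conditional measures. The subtle point is to set up an almost-subadditive cocycle linking $\mu^{\pi^{-1}(\{x\})}$ and $\mu^{\pi^{-1}(\{f_{y_1}^{-1}(x)\})}$ and to verify its integrability; this is precisely the technical innovation of Feng-Hu's projection entropy theory, and any proof of the stated decomposition must go through it.
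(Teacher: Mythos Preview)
The paper does not actually prove this theorem: it is quoted verbatim from Feng--Hu \cite{FH} and used as a black box (see the introduction ``Let us recall the following fundamental result'' preceding the statement, with no proof following it). So there is no ``paper's own proof'' to compare your attempt against.

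That said, your sketch is a faithful outline of the Feng--Hu argument itself. The three-step architecture---Rokhlin disintegration, identification of the fiber entropy $h$ via projection entropy $H_\nu(\mathcal{P}\mid\sigma^{-1}\pi^{-1}\mathcal{B}(K))$, and the transfer to the geometric scale through the Lyapunov exponent---is exactly how \cite{FH} proceeds, and you have correctly isolated the crux: the fibers are not shift-invariant, so Shannon--McMillan--Breiman does not apply directly to the conditional measures, and one must instead build a subadditive (or almost-additive) cocycle relating conditional measures along the orbit and invoke Kingman. One small point of caution in Step~3: the approximate equality $\mu(B(x,r))\asymp p_{y_1}\cdots p_{y_n}/\mu^{\pi^{-1}(\{x\})}([y_1,\ldots,y_n])$ is not a pointwise estimate but only holds after passing to logarithms, dividing by $n$, and taking limits; in the presence of overlaps there is no uniform comparability at finite scales, and Feng--Hu obtain the two inequalities for $\underline{\dim}_{\rm loc}$ and $\overline{\dim}_{\rm loc}$ by separate covering arguments rather than a single two-sided bound.
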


\sk

%
%
Proposition \ref{propopres} is now proved.

\begin{proof}
Assume first that the limit exists in $\mathbb{R}\cup\left\{-\infty\right\}$ and let us show that it is independent of the choice of $z$ and that the limit is $>-\infty$. Let  $ c>1$ be a a real number. By \eqref{lemmewef}, following the notation involved, for any $k\in\mathbb{N},$ one has
\begin{align}
\label{poiuy}
\log\left(\sum_{\underline{i}\Lambda^k}D(c)^{-s}c^{-sk}\vert f_{\underline{i}}(K)\vert ^s\right)\leq \log\left(\sum_{\underline{i}\in \Lambda^k}\vert\vert f_{\underline{i}}' (z)\vert\vert^s\right)\leq \log\left(\sum_{\underline{i}\Lambda^k}D(c)^s c^{sk}\vert f_{\underline{i}}(K)\vert^s \right).
\end{align}
Since  \eqref{poiuy} holds for any $c>1$, one gets that 
\begin{equation}
\label{indepz}
\lim_{k\rightarrow+\infty}\frac{1}{k}\Big(\log\left( \sum_{\underline{i}\in \Lambda^k}\vert \vert f_{\underline{i}}^{\prime}(z)\vert \vert^s \right)-\log\left(\sum_{\underline{i}\Lambda^k}\vert f_{\underline{i}}(K)\vert ^s\right)\Big)=0,
\end{equation}
which proves  that this quantity does not depend on $z$. Moreover, there exists $b>0$ so that for any $k\in\mathbb{N}$, any $\underline{i}\in\Lambda^k$, any $x\in K$, $$\vert \vert f_{\underline{i}}'(x)\vert\vert\geq b^{k}.$$
This implies that if $P_z(s)$ is well defined, then $P_z(s)>-\infty.$

Let us now prove that the limit exists. For $k\in\mathbb{N},$  write
\begin{equation}
\label{defg}
g_n =\log\left(\sum_{\underline{i}\in \Lambda^k}\vert f_{\underline{i}}(K)\vert^s \right).
\end{equation}

\begin{lemme}
\label{lemmutpreuv}
For any $\varepsilon>0,$ there exists a constant $M_{\varepsilon}>0$ such that for any $n,m\in\mathbb{N},$ one has
\begin{equation}
\label{quasisubadd}
g_{n+m}\leq M_{ \varepsilon}+m\varepsilon+g_n +g_m.
\end{equation}
Furthermore, any sequence $(g_n)_{n\in\mathbb{N}}$ verifying \eqref{quasisubadd} is such that $(\frac{g_n}{n})_{n\in\mathbb{N}}$ converges in $\mathbb{R}\cup\left\{-\infty\right\}$.
\end{lemme}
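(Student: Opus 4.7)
The plan is to establish (1) via the distortion bounds of Lemma \ref{lemmewef}, and (2) by a Fekete-style iteration that absorbs the perturbation $M_\varepsilon + m\varepsilon$ at each step.

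For (1), fix $\varepsilon > 0$ and choose $c > 1$ so small that $2s \log c \leq \varepsilon$. Each word $\underline{w} \in \Lambda^{n+m}$ decomposes uniquely as $\underline{w} = \underline{j}\cdot\underline{i}$ with $|\underline{j}| = m$ and $|\underline{i}| = n$, giving $f_{\underline{w}}(K) = f_{\underline{j}}(f_{\underline{i}}(K))$ with $f_{\underline{i}}(K) \subset K$. Fix any $z \in f_{\underline{i}}(K)$. Applying \eqref{bdist1} to the word $\underline{j}$ at two points of $f_{\underline{i}}(K)$ and then \eqref{bdist2} to $\underline{j}$ at $z$ yields
$$|f_{\underline{w}}(K)| \leq D(c)\, c^{m} \|f_{\underline{j}}'(z)\| \cdot |f_{\underline{i}}(K)| \leq D(c)^{2}\, c^{2m}\, |f_{\underline{j}}(K)| \cdot |f_{\underline{i}}(K)|.$$
Raising to the $s$-th power and summing via the canonical bijection $\Lambda^{n+m} \simeq \Lambda^m \times \Lambda^n$ gives $e^{g_{n+m}} \leq D(c)^{2s} c^{2ms} e^{g_n + g_m}$. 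Taking logarithms and setting $M_\varepsilon := 2s \log D(c)$ yields precisely \eqref{quasisubadd}.

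For (2), fix $\varepsilon > 0$ and $n_0 \in \mathbb{N}$. For $N > n_0$, write $N = q n_0 + r$ with $0 \leq r < n_0$ and iterate \eqref{quasisubadd} with $m = n_0$, decreasing the first index by $n_0$ at each step. This telescopes into
$$g_N \leq q M_\varepsilon + q n_0 \varepsilon + q\, g_{n_0} + g_r.$$
Dividing by $N$ and letting $N \to \infty$ with $n_0$ fixed (so $q/N \to 1/n_0$, $q n_0/N \to 1$, and the finite set $\{g_r : 0 \leq r < n_0\}$ stays bounded) gives
$$\limsup_{N \to \infty} \frac{g_N}{N} \leq \frac{M_\varepsilon}{n_0} + \varepsilon + \frac{g_{n_0}}{n_0}.$$
Passing to a subsequence $n_0 \to \infty$ that realizes $\liminf_n g_n/n$ kills the term $M_\varepsilon/n_0$ and produces $\limsup g_N/N \leq \varepsilon + \liminf g_n/n$. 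Finally, sending $\varepsilon \to 0^+$ yields $\limsup \leq \liminf$, so $g_n/n$ converges in $\mathbb{R} \cup \{-\infty\}$ (if $\liminf = -\infty$ the same chain forces $\limsup = -\infty$).

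The main subtlety lies in step (1): the parameter $c > 1$ must be taken close enough to $1$ that the distortion slack $2sm \log c$ fits inside $\varepsilon m$, while $D(c)$ must remain independent of $n$ and $m$; Lemma \ref{lemmewef} guarantees exactly this, and it is what allows the iteration in (2) to converge cleanly in each of the three limits $N \to \infty$, $n_0 \to \infty$, $\varepsilon \to 0^+$.
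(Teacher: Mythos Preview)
Your proof is correct and follows essentially the same strategy as the paper. For part (1), the paper isolates the multiplicative bound $|f_{\underline{i}\underline{j}}(K)|\le 2D(c)^2c^{2k}|f_{\underline{i}}(K)|\cdot|f_{\underline{j}}(K)|$ as a separate sublemma (Lemma~\ref{subaddf}) before summing, while you derive it inline, but the use of \eqref{bdist1}--\eqref{bdist2} is identical. For part (2), both arguments are the same Fekete-type iteration; your version is slightly cleaner in that it tracks the error as $M_\varepsilon/n_0+\varepsilon$ and takes the three limits $N\to\infty$, $n_0\to\infty$, $\varepsilon\to 0$ in order, whereas the paper bundles the constant into an $(1+\varepsilon)$ multiplicative factor.
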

\begin{proof}
Let us start by proving the second statement. Let $(g_n)_{n\in\mathbb{N}}$ be a sequence satisfying \eqref{quasisubadd}. Fix $\varepsilon>0$ and $M_{\varepsilon}$ satisfying \eqref{quasisubadd}.  For any $q\in\mathbb{N},$  $b\in\mathbb{N},$ $0\leq r< q$, one has
\begin{align*}
&g_{bq+r}\leq bg_{q}+g_r +(bq+r) \varepsilon+(b+1)M,
\\& \Rightarrow \frac{g_{bq+r}}{bq+r}\leq \frac{bq}{bq+r}\cdot \frac{g_q}{q}+\frac{(b+1)M+g_r}{bq+r}+\varepsilon. 
\end{align*}
 Fixing $q$ large enough so that $\frac{(b+1)M}{bq}\leq \varepsilon$, for any large $b\in\mathbb{N}$, one has 
 $$\frac{g_{bq+r}}{bq+r}\leq (1+\varepsilon)\frac{g_q}{q}+2\varepsilon.$$
 This implies that
  $$\limsup_{n\rightarrow+\infty}\frac{g_n}{n}\leq (1+\varepsilon)\liminf_{n\rightarrow+\infty}\frac{g_n}{n}+\varepsilon.$$
  Letting $\varepsilon\to 0$ proves the statement. 

One now shows that $g_n$ satisfies \eqref{quasisubadd}.

 Let $k\in\mathbb{N}$ and $\underline{i}\in\Lambda^k.$ Let us begin by the following lemma.
\begin{lemme}
\label{subaddf}
Following the notation of Lemma \ref{lemmewef}, one has, for any $ \underline{j}\in\Lambda^{*},$
\begin{equation}
\label{bdist3}
\frac{1}{2}D(c)^{-2}c^{-2k} \vert f_{\underline{i}}(K)\vert\cdot \vert f_{\underline{j}}(K) \vert\leq  \vert f_{\underline{i}\underline{j}}(K))\vert\leq 2D(c)^{2}c^{2k} \vert f_{\underline{i}}(K)\vert\cdot \vert f_{\underline{j}}(K) \vert.
\end{equation}
\end{lemme}
\begin{proof}
Let us start by establishing the lower-bound. 

Let $x,y\in K$ such that 
\begin{equation}
\label{boa}
\vert\vert f_{ \underline{j}}(x)-f_{\underline{j}}(y)\vert\vert\leq\vert f_{\underline{j}}(K)\vert\leq 2\vert\vert f_{\underline{j}}(x)-f_{\underline{j}}(y)\vert\vert. 
\end{equation}
By Lemma \ref{lemmewef}, one has 
\begin{align}
\label{boa1}
D(c)^{-1}c ^{-k}\vert\vert f_{\underline{i}}^{\prime}(f_{\underline{j}}(x))\vert\vert\cdot \vert\vert f_{\underline{j}}(x)-f_{\underline{j}}(y)\vert\vert\leq \vert \vert f_{\underline{i}\underline{j}}(x)-f_{\underline{i}\underline{j}}(y)\vert\vert\leq \vert f_{\underline{i}\underline{j}}(K)\vert,
\end{align}
and
\begin{align}
\label{boa2}
\vert\vert f_{\underline{i}}^{\prime}(f_{\underline{j}}(x))\vert\vert\geq D(c)^{-1}c^{-k}\vert f_{\underline{i}}(K)\vert.
\end{align} 
Combining \eqref{boa}, \eqref{boa1} and \eqref{boa2}, one obtains
\begin{align*}
\frac{1}{2}D(c)^{-2}c^{-2k}\vert f_{\underline{i}}(K)\vert \cdot \vert f_{\underline{j}}(K)\vert\leq \vert f_{\underline{i}\underline{j}}(K)\vert.
\end{align*}

Let us focus now on the upper-bound. Let $x,y\in K$ such that 
\begin{equation}
\label{boa5}
\vert\vert f_{\underline{i}\underline{j}}(x)-f_{\underline{i}\underline{j}}(y)\vert\vert\geq \frac{1}{2}\vert f_{\underline{i}\underline{j}}(K)\vert.
\end{equation}
Using again Lemma \ref{lemmewef}, one has
\begin{align}
\label{boa6}
\vert\vert f_{\underline{i}\underline{j}}(x)-f_{\underline{i}\underline{j}}(y)\vert\vert\leq D(c)c^k \vert\vert f_{\underline{i}}'(f_{\underline{j}}(x))\vert\vert \cdot \vert\vert f_{ \underline{j}}(x)-f_{\underline{j}}(y) \vert\vert\leq D(c)^2 c^{2k}\vert f_{\underline{i}}(K)\vert \cdot \vert f_{\underline{j}}(K)\vert.
\end{align}
The upper-bound is obtained by combining \eqref{boa5} and \eqref{boa6}.
\end{proof}
By Lemma \ref{subaddf}, for any $c>1$ and any $n,n^{\prime}\in\mathbb{N},$ one has
\begin{align*}
&\log\left(\sum_{\underline{i}\in\Lambda^{n+n'}}\vert f_{\underline{i}}(K) \vert^s\right)=\log\left(\sum_{\underline{i}\in \Lambda^n,\underline{j}\in\Lambda^{n'}}\vert f_{\underline{i}\underline{j}}(K)\vert^s\right)\\
&\leq \log\left(\sum_{\underline{i}\in \Lambda^n,\underline{j}\in\Lambda^{n'}}2^s D(c)^{2s}c^{2sn}\vert f_{\underline{i}}(K)\vert^s \vert f_{\underline{j}}(K)\vert^s\right)\\
&=n \cdot 2s\log(c)+\log(2^s D(c)^{2s})+\log\left((\sum_{\underline{i}\in\Lambda^n}\vert f_{\underline{i}}(K)\vert^s)\times(\sum_{\underline{j}\in\Lambda^{n^{\prime}}}\vert f_{\underline{j}}(K)\vert^s)\right)\\
&\leq 2sn\log(c)+\log(2^s D(c)^{2s})+g_{n}+g_{n'}.
\end{align*} 
This concludes the proof of Lemma  \ref{lemmutpreuv}.

\end{proof} 
 Lemma \ref{lemmutpreuv} together with \eqref{indepz} concludes the proof.
\end{proof}

Since $P_z(s) $ does not depend on $z$, one writes 
$$P_z (s)=P(s)=\lim_{k\rightarrow+\infty}\frac{1}{k}\log\left(\sum_{\underline{i}\Lambda^k}\vert f_{\underline{i}}(K)\vert ^s\right)$$

%
%
%
%


\subsubsection{A class of IFS's satisfying $\dim(S)=\dim_H(K)$} 
In this section it is proved that weakly conformal IFS's satisfying the asymptotically weak separation condition (AWSC in short) with no exact overlaps satisfies $\dim(S)=\dim_H(K),$ so that Theorem \ref{SHRTARG} applies for those IFS's. 

Let us first introduce, for all $k\in\mathbb{N}$, 
\begin{equation}
\label{deflam(k)}
\Lambda^{(k)}=\left\{\underline{i}=(i_1,...,i_n) \in\Lambda^{*}:\vert f_{\underline{i}}(K)\vert \leq 2^{-k}<\vert f_{(i_1,...i_{k-1})}(K)\vert\right\}.
\end{equation}

\begin{definition}
\label{separacond}
Let $m\geq 2$ and $S=\left\{f_1,...,f_m\right\}$ a weakly conformal IFS.    For $k\in\mathbb{N}$, define
\begin{equation}
\label{deftk}
t_k(S)=\max_{x\in\mathbb{R}^d}\#\left\{f_{\underline{i}}:\underline{i}\in\Lambda^{(k)}\text{ and } f_{\underline{i}}(K)\cap B(x,r)\neq \emptyset\right\},
\end{equation}
The system $S$  satisfies the asymptotically weak separation condition (AWSC) \cite{Feng2007} when  
$$\frac{\log t_k}{k}\to 0 .$$

\end{definition}

Let us recall the notion of dimension regular weakly conformal IFS's, introduced by Barral and Feng in \cite{BFmult} in the case of self-similar IFS's.

\begin{definition}[ \cite{BFmult}]
\label{dimreg}
One says that $S$ is dimension regular  if for any weakly conformal measure $\mu\in\mathcal{M}(\mathbb{R}^d)$ associated with the probability vector $(p_1,...,p_m)\in[0,1]^m$ and $S$, recalling \eqref{expolyapu} and denoting $\nu\in\mathcal{M}(\Lambda^{\mathbb{N}})$ verifying $\mu=\nu\circ\pi^{-1}$, one has
\begin{equation}
\dim(\mu)=\min\left\{\frac{-\sum_{1\leq i\leq m}p_i \log(p_i)}{\lambda_{\nu}}, d\right\},
\end{equation}
where $\lambda_{\nu}$ is defined by \eqref{defexpolyap}.
\end{definition}
\begin{remark}
When $S$ is self-similar, calling $0<c_1 ,...,c_m<1$ the contraction ration of the similarities $f_1,...,f_m$, for any probability vector $(p_1,...,p_m),$ $\mu$ and $\nu$ as in Definition  \ref{dimreg}, one has
\begin{equation}
\dim(\mu)=\min\left\{\frac{-\sum_{1\leq i\leq m}p_i \log(p_i)}{\lambda_{\nu}}, d\right\}=
\min\left\{\frac{\sum_{1\leq i\leq m}p_i \log(p_i)}{\sum_{i=1}^m p_i \log (c_i )}, d\right\}.
\end{equation}
\end{remark}

\begin{proposition}
\label{dimweakconf}
 Assume that  $S=\left\{f_1,...,f_m\right\}$   satisfies the AWSC without exact overlaps. Then $S$ is dimension regular. Moreover, $\dim(S)=\dim_H(K).$  
 \end{proposition}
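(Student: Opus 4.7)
The plan is to first establish dimension regularity via the Feng--Hu decomposition (Theorem \ref{decompofeng}), and then deduce $\dim(S)=\dim_H(K)$ by a variational argument using the pressure $P$ from Proposition \ref{propopres}.

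Fix a probability vector $(p_1,\dots,p_m)$ and let $\nu$, $\mu$, $h$, $\lambda_\nu$ be as in Theorem \ref{decompofeng}. Dimension regularity reduces to proving $h=0$: Theorem \ref{decompofeng} would then give $\dim(\mu)=H(p)/\lambda_\nu$ with $H(p)=-\sum p_i \log p_i$, and the minimum with $d$ in Definition \ref{dimreg} is automatic from $\dim(\mu)\leq d$. To prove $h=0$, I introduce
\[
N_n(x)=\#\{\underline{i}\in \Lambda^n : x\in f_{\underline{i}}(K)\},
\]
which is exactly the number of positive-measure length-$n$ cylinders of $\mu^{\pi^{-1}(\left\{x\right\})}$. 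For $\mu$-typical $x$ and any $\varepsilon>0$, Theorem \ref{decompofeng} ensures that for $n$ large the cylinders of $\mu^{\pi^{-1}(\left\{x\right\})}$-mass at most $e^{-n(h-\varepsilon)}$ carry more than half of the total mass; since there are at most $N_n(x)$ positive-measure cylinders, a pigeonhole argument gives $N_n(x)\geq \frac{1}{2}e^{n(h-\varepsilon)}$, hence $h\leq \limsup_n \frac{\log N_n(x)}{n}$.

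It remains to bound $\log N_n(x)/n$. By \eqref{encafibar}, every $\underline{i}\in\Lambda^n$ satisfies $C_\alpha \alpha^n\leq |f_{\underline{i}}(K)|\leq C_\beta \beta^n$, so $\underline{i}\in \Lambda^{(k)}$ for some $k$ with $k_1(n)\leq k\leq k_2(n)$, where $k_1,k_2$ are positive and grow linearly in $n$. The absence of exact overlaps makes counting indices coincide with counting distinct maps $f_{\underline{i}}$, so the AWSC condition yields
\[
N_n(x)\leq \sum_{k=k_1(n)}^{k_2(n)} t_k(S)=e^{o(n)},
\]
since $\log t_k /k\to 0$. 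Hence $h=0$ and dimension regularity follows.

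For the equality $\dim(S)=\dim_H(K)$, the inequality $\dim_H(K)\leq \dim(S)$ is a direct consequence of covering $K$ by $\{f_{\underline{i}}(K):\underline{i}\in\Lambda^n\}$ and the definition of $P$. For the converse, the variational principle for the subadditive pressure $P$ supplies Bernoulli vectors $(p_i^{(n)})$ with $H(p^{(n)})/\lambda_{\nu_n}\to \dim(S)$; dimension regularity together with Corollary \ref{convproba} then yields weakly conformal measures $\mu_n$ supported on $K$ with $\dim(\mu_n)\to \min\{\dim(S),d\}$, so $\dim_H(K)\geq \min\{\dim(S),d\}$. The hard part is the vanishing of $h$: it requires transferring the AWSC control, formulated at diameter scales $2^{-k}$, to the word-length partition used by Feng--Hu, through the uniform diameter bounds \eqref{encafibar}; once $h=0$, the variational step is routine.
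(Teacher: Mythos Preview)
Your argument for $h=0$ is correct in spirit and in fact slightly cleaner than the paper's. The paper first restricts to cylinders whose diameter lies in the narrow band $[e^{-n(\lambda+\varepsilon)},e^{-n(\lambda-\varepsilon)}]$ (invoking the Lyapunov convergence extracted from the \emph{proof} of Theorem~\ref{decompofeng}) and then pigeonholes into a single $\Lambda^{(k)}$; you work directly with all length-$n$ cylinders and accept the full linear range $k_1(n)\le k\le k_2(n)$, which is harmless since $(k_2-k_1)\max_k t_k=O(n)\,e^{o(n)}=e^{o(n)}$. Two small corrections: $N_n(x)$ is only an \emph{upper bound} for the number of positive-mass cylinders of $\mu^{\pi^{-1}(\{x\})}$ (positive mass forces $[\underline{i}]\cap\pi^{-1}(\{x\})\neq\emptyset$, not conversely), which is the inequality you need; and the claim ``$\underline{i}\in\Lambda^{(k)}$ for some $k$'' needs the one-step diameter contraction to exceed~$2$, otherwise the admissible interval of $k$'s may contain no integer---fix this by iterating $S$.

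The variational step, however, has a real gap. The variational principle for $P$ produces an equilibrium state $\nu^\star$ on $\Lambda^{\mathbb N}$ with $h_{\nu^\star}/\lambda_{\nu^\star}=\dim(S)$, but for a genuinely weakly conformal system the potential $y\mapsto\log\|f'_{y_1}(\pi(\sigma y))\|$ depends on all coordinates, so $\nu^\star$ is typically \emph{not} Bernoulli. Since Bernoulli measures on $\Lambda$ form a compact set on which $p\mapsto H(p)/\lambda_{\nu}$ is continuous, the supremum over Bernoulli vectors on $\Lambda$ is attained and can be strictly below $\dim(S)$; your sentence ``the variational principle supplies Bernoulli vectors $(p_i^{(n)})$ with $H(p^{(n)})/\lambda_{\nu_n}\to\dim(S)$'' is therefore unjustified. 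What one can construct---and this is precisely Lemma~\ref{weakconfapeupres}---are Bernoulli measures on the \emph{iterate} $\Lambda^k$ for large $k$, i.e.\ weakly conformal measures for $S'=\{f_{\underline{i}}\}_{\underline{i}\in\Lambda^k}$. To apply dimension regularity to those you must check that $S'$ also satisfies AWSC without exact overlaps (it does, and the paper notes this explicitly). Corollary~\ref{convproba} plays no role here; it concerns continuity of $\lambda_\nu$ in the probability vector and is used elsewhere in the paper.
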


Before proving Proposition \ref{dimweakconf}, let us start by the following lemma.

\begin{lemme}
\label{weakconfapeupres}
Let $\varepsilon>0$ and $s\geq 0$ be a real numbers. There exists $k\in\mathbb{N}$, a probability vector $(p_{\underline{i}})_{\underline{i}\in \Lambda^k}$  such that the weakly conformal measure $\nu$ associated with $S'=\left\{f_{\underline{i}}\right\}_{\underline{i}\in\Lambda^k}$ and $(p_{\underline{i}})_{\underline{i}\in\Lambda^k}$ verifies, for any $p\in\mathbb{N}$ and $\underline{i}_1 ,...,\underline{i}_p \in\Lambda^{k},$ 
\begin{equation}
e^{-kp\varepsilon}\frac{\vert f_{\underline{i}_1 ... \underline{i}_p}(K)\vert^s}{e^{pkP(s)}}\leq \nu([\underline{i}_1 ... \underline{i}_p])\leq e^{kp\varepsilon}\frac{\vert f_{\underline{i}_1 ... \underline{i}_p}(K)\vert^s}{e^{pkP(s)}}
\end{equation} 
\end{lemme}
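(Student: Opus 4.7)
The plan is to choose the probability vector in the most natural way compatible with the pressure, namely $p_{\underline{i}} := |f_{\underline{i}}(K)|^s / Z_k$ with $Z_k := \sum_{\underline{j} \in \Lambda^k} |f_{\underline{j}}(K)|^s$. With this definition one has
\[
\nu([\underline{i}_1\cdots \underline{i}_p]) = Z_k^{-p}\prod_{l=1}^p |f_{\underline{i}_l}(K)|^s,
\]
so the estimate reduces to two separate comparisons: $Z_k^{-p}$ against $e^{-pkP(s)}$, and $\prod_l |f_{\underline{i}_l}(K)|^s$ against $|f_{\underline{i}_1\cdots \underline{i}_p}(K)|^s$. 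The first is immediate from Proposition \ref{propopres} together with \eqref{indepz}: for any $\eta>0$, if $k$ is large enough then $e^{k(P(s)-\eta)} \leq Z_k \leq e^{k(P(s)+\eta)}$, hence $Z_k^{-p}$ and $e^{-pkP(s)}$ differ by at most a multiplicative factor $e^{pk\eta}$.

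For the second comparison I would iterate Lemma \ref{subaddf}: splitting $\underline{i}_1\cdots \underline{i}_p$ as $\underline{i}_1(\underline{i}_2\cdots \underline{i}_p)$, then $\underline{i}_2(\underline{i}_3\cdots \underline{i}_p)$, and so on, $p-1$ applications (each with an arbitrary but fixed $c>1$, using the fact that the left factor always has length $k$) yield
\[
(2D(c)^2 c^{2k})^{-(p-1)}\prod_{l=1}^p |f_{\underline{i}_l}(K)| \leq |f_{\underline{i}_1\cdots \underline{i}_p}(K)| \leq (2D(c)^2 c^{2k})^{p-1}\prod_{l=1}^p |f_{\underline{i}_l}(K)|.
\]
Raising to the $s$-th power, $\prod_l |f_{\underline{i}_l}(K)|^s$ and $|f_{\underline{i}_1\cdots \underline{i}_p}(K)|^s$ differ by at most a factor $e^{s(p-1)(2k\log c + 2\log D(c) + \log 2)}$.

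The only real (and mild) obstacle is to calibrate constants so that the two error exponents together stay below $kp\varepsilon$. The plan is to first pick $c>1$ so close to $1$ that $2s\log c < \varepsilon/3$, next to take $\eta < \varepsilon/3$, and finally to choose $k$ large enough for both the pressure estimate to hold with this $\eta$ and for $s(2\log D(c)+\log 2)/k < \varepsilon/3$. Collecting all contributions, the total logarithmic deviation between $\nu([\underline{i}_1\cdots \underline{i}_p])$ and $|f_{\underline{i}_1\cdots \underline{i}_p}(K)|^s\, e^{-pkP(s)}$ is at most $kp\varepsilon$, which is precisely the claimed two-sided bound.
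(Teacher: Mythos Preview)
Your proposal is correct and follows essentially the same route as the paper: the same choice $p_{\underline{i}}=|f_{\underline{i}}(K)|^s/Z_k$, the same two comparisons (of $Z_k^{-p}$ with $e^{-pkP(s)}$ via the definition of the pressure, and of $\prod_l|f_{\underline{i}_l}(K)|^s$ with $|f_{\underline{i}_1\cdots\underline{i}_p}(K)|^s$ via iteration of Lemma~\ref{subaddf}), and the same calibration principle (choose $c$ close to $1$, then $k$ large). The only cosmetic differences are in the bookkeeping of constants: the paper takes $8s\log c\le\varepsilon$ and $\log D(c)/k\le\log c$, whereas you split the error budget into thirds, but both lead to the same conclusion.
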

\begin{proof}
Fix $\varepsilon>0$ and $c>1$ such that $8s\log c\leq \varepsilon.$

By Lemma \ref{lemmutpreuv} there exists $k\in\mathbb{N}$ large enough so that, the constant named $D(c)$ in Lemma \ref{lemmewef} verifies $ \frac{\log D(c)}{k}\leq \log c $ and  
\begin{equation}
\label{aqs3}
\vert \frac{1}{k}\log \sum_{\underline{i}\in\Lambda^k}\vert f_{\underline{i}}(K)\vert^s -P(s)\vert\leq \frac{\varepsilon}{2}.
\end{equation}

 Writing again $g_k =\log \sum_{\underline{i}\in\Lambda^k}\vert f_{\underline{i}}(K)\vert^s,$ let us define the probability vector $(p_{\underline{i}})_{\underline{i}\in\Lambda^k}$ by setting
 $$p_{\underline{i}}=\frac{\vert f_{\underline{i}}(K)\vert^s}{e^{g_k}}.$$
 Let $\nu$ be the weakly conformal measure associated with $S'=\left\{f_{\underline{i}}\right\}_{\underline{i}\in\Lambda^k}$ and $(p_{\underline{i}})_{\underline{i}\in\Lambda^k}.$ Applying Lemma \ref{bdist3}, for any $p\in\mathbb{N},$ $\underline{i}_1,...,\underline{i}_p$, 
 \begin{align}
 \label{aqs0}
 D(c)^{-2p}c^{-2kp}\leq \frac{\vert f_{\underline{i}_1}\circ ...\circ f_{\underline{i}_p}(K)\vert}{\prod_{j=1}^p \vert f_{\underline{i}_j}(K)\vert}\leq  D(c)^{2p}c^{2kp}.
 \end{align}
Also
\begin{align}
\label{aqs1}
D(c)^{2sp} c^{2skp}=e^{pk 2s\cdot (\frac{\log D(c)}{k}+\log c)}\leq e^{\frac{\varepsilon}{2} pk}.
\end{align}
As a consequence, for any $p\in\mathbb{N}$ and any $\underline{i}_1,...,\underline{i}_p \in\Lambda^k,$ one has
\begin{align*}
\nu([\underline{i}_1 ... \underline{i}_p])=p_{\underline{i}_1}\cdot ...\cdot p_{\underline{i}_p}=\frac{\prod_{j=1}^p \vert f_{\underline{i}_j}(K)\vert^s}{e^{pg_k}}=\frac{\prod_{j=1}^p \vert f_{\underline{i}_j}(K)\vert^s}{e^{kp(\frac{g_k}{k}-P(s))}e^{pkP(s)}}.
\end{align*}
Using \eqref{aqs3}, \eqref{aqs0} and \eqref{aqs1} concludes the proof.

\end{proof}

\begin{remark}
\label{frakegibbs}
The measure $\nu$ can be extended over $\Lambda^{\mathbb{N}}$ by the usual arguments. Moreover, for any $\underline{i}=(i_1,...,i_n)\in\Lambda^{*},$ write $n_1=k\lfloor \frac{n}{k} \rfloor$ and $n_2=k(\lfloor \frac{n}{k} \rfloor+1).$ Consider $\underline{j}\in \Lambda^{n_1}$ such that $[\underline{i}]\subset [\underline{j}]$ and $\underline{\ell}=(\ell_1,...,\ell_{n_2-n})\in\Lambda^{n_2-n},$ one has 
\begin{equation}
e^{-n_2\varepsilon}\frac{\vert f_{(i_1, ..., i_{n},\ell_1,...,\ell_{n_2-n})}(K)\vert^s}{e^{n_2 P(s)}}\leq\nu([\underline{i}\underline{\ell}]) \leq \nu([\underline{i}])\leq \nu([\underline{j}])\leq e^{n_1\varepsilon}\frac{\vert f_{(i_1,...,i_{n_1})}(K)\vert^s}{e^{n_1 P(s)}}.
\end{equation} 
Since, by Lemma \ref{bdist3}, there exists a constant $C>0$ such that, uniformly on $\underline{i},\underline{j},\underline{i}\underline{\ell},$ one has
\begin{equation*}
C^{-1}\leq\min\left\{\frac{\vert f_{\underline{j}}(K)\vert}{\vert f_{\underline{i}}(K)\vert},\frac{\vert f_{\underline{i}}(K)\vert}{\vert f_{\underline{i}\underline{\ell}}(K)\vert}\right\} \leq\max\left\{\frac{\vert f_{\underline{j}}(K)\vert}{\vert f_{\underline{i}}(K)\vert},\frac{\vert f_{\underline{i}}(K)\vert}{\vert f_{\underline{i}\underline{\ell}}(K)\vert}\right\}\leq C,
\end{equation*}
there exists a constant $\gamma_{s,\varepsilon}$, such that, for any $\underline{i}=(i_1,...,i_n)\in\Lambda^{*}$, one has  
\begin{equation}
\gamma_{s,\varepsilon}^{-1}e^{-n\varepsilon}\frac{\vert f_{\underline{i}}(K)\vert^s}{e^{nP(s)}}\leq \nu([\underline{i}])\leq \gamma_{s,\varepsilon}e^{n\varepsilon}\frac{\vert f_{\underline{i}}(K)\vert^s}{e^{nP(s)}}.
\end{equation}

\end{remark}
Let  us now prove Proposition \ref{dimweakconf}.

\begin{proof}
Call $K$ the attractor of $S$. Let us show first that if any system $S$ satisfying the AWSC also verifies that, for any weakly-conformal measure  $\mu\in\mathcal{M}(\mathbb{R}^d)$ associated with a probability vector $(p_1,...,p_m)$ and $S$, 
\begin{equation}
\label{weakconfdimreg}
\dim(\mu)=\frac{-\sum_{1\leq i\leq m}p_i \log p_i}{\lambda_{\nu}},
 \end{equation}
where $\nu$ is the measure associated on $\Lambda^{\mathbb{N}},$ then $\dim(S)=\dim_H (K).$ 

Fix $\varepsilon>0$ consider $k\in\mathbb{N}$, $S'=\left\{f_{\underline{i}}\right\}_{\underline{i}\in\Lambda^k}$ and $\nu$ as in Lemma \ref{weakconfapeupres} applied with $s=\dim(S).$ Note that, since $S$ satisfies the AWSC, so does $S'$. Then, considering the measure $\mu=\nu\circ \pi^{-1}$, where $\pi$ is the canonical projection, one has 
\begin{align*}
\dim(S)-\varepsilon\leq \dim (\mu)=\frac{-\sum_{\underline{i}\in\Lambda^k}p_{\underline{i}}\log p_{\underline{i}}}{\lambda_{\nu}}\leq \dim(S)+\varepsilon.
\end{align*}   
This proves that $\dim_H (K)\geq \dim (S)-\varepsilon.$ Since it always holds that $\dim_H(K)\leq \dim(S)$ (see \cite{Fa1}) and $\varepsilon$ is arbitrary, 
$$\dim_H(K)=\dim(S).$$ 
 
 Let us now prove that, for any system satisfying the AWSC, \eqref{weakconfdimreg} holds for every weakly conformal measure $\mu$.

Let $\mu\in\mathcal{M}(\mathbb{R}^d)$ be a weakly conformal measure associated with $S$ and a probability vector $(p_1,...,p_m)$ and $\nu\in\mathcal{M}(\Lambda^{\mathbb{N}})$ such that $\mu=\nu\circ \pi^{-1}.$ 

One applies Theorem \ref{decompofeng} to $\mu.$

Moreover, it comes from from the proof of Theorem \ref{decompofeng} \cite{FH}, that for any $\varepsilon>0,$ for $\mu$-almost any $x\in K$ such that $\mu^{\pi^{-1}(\left\{x\right\})}$ exists and satisfies the two first items of Theorem \ref{decompofeng}, there exists $n_0$ large enough so that, for any $n\geq n_0$, there exists $\underline{i}_1,...,\underline{i}_{N_n}$ such that:
\begin{itemize} 
\item[•] for any $1\leq j\leq N_n,$
\begin{equation}
\label{equprop1}
e^{-n(\lambda+\varepsilon)}\leq\vert f_{\underline{i}_j}(K)\vert\leq e^{-n(\lambda-\varepsilon)},
\end{equation} 
 \item[•] one has
\begin{equation}
\label{equprop2}
\mu^{\pi^{-1}(\left\{x\right\})}\left(\bigcup_{1\leq j\leq N_n}[\underline{i}_j]\right)\geq \frac{1}{2},
\end{equation}
\item[•] for any $1\leq j\leq N_n,$
\begin{equation}
\label{equprop3}
e^{-n(h+\varepsilon)}\leq\mu^{\pi^{-1}(\left\{x\right\})}([\underline{i}_j])\leq e^{-n(h-\varepsilon)}
\end{equation}
\end{itemize}
Assume that $h>0$ and take $0<\varepsilon<\min\left\{ \frac{h}{2},\frac{ \lambda}{2}\right\}.$

Combining \eqref{equprop2} and \eqref{equprop3}, one gets
\begin{equation}
N_n \geq \frac{1}{2}e^{n(h-\varepsilon)}.
\end{equation}
Note that $\#\left\{k:e^{-n(\lambda+\varepsilon)}\leq 2^{-k}\leq e^{-n(\lambda-\varepsilon)}\right\}\leq\frac{2n\varepsilon}{\log 2}.$ As a consequence, there exists $k\in [\frac{n(\lambda-\varepsilon)}{\log 2},\frac{n(\lambda+\varepsilon)}{\log 2}]$ such that 
\begin{equation}
\# \Lambda^{(k)}\cap \left\{[\underline{i}_j]\right\}_{1\leq j\leq N_n}\geq \frac{N_n}{\frac{2n\varepsilon}{\log 2}}\geq \frac{\frac{1}{2}e^{\frac{nh}{2}}}{\frac{2n\varepsilon}{\log 2}}.
\end{equation}
 Since for any $\leq j \leq N_n,$ $[ \underline{i}_j]\cap \pi^{-1}(\left\{x\right\})\neq \emptyset,$ one also has $f_{\underline{i}_j}(K)\subset B(x,e^{-n(\lambda-\varepsilon)}),$
 so that, writing $n'=\lfloor \frac{n(\lambda-\varepsilon)}{\log2} \rfloor$, one has 
\begin{equation}
 \#\left\{\underline{i}\in \Lambda^{(n')}:f_{\underline{i}}(K)\cap B(x,2^{-n'})\right\}\geq \frac{\frac{1}{2}e^{\frac{nh}{2}}}{\frac{2n\varepsilon}{\log 2}}.
\end{equation}
In particular, recalling \eqref{deftk},
$$\frac{\log t_k}{k}\nrightarrow 0, $$
and $S$ does not satisfy the AWSC. As a consequence, $S$ satisfies the AWSC implies $h=0$, which, recalling the last item of Theorem \ref{decompofeng}, concludes the proof.
\end{proof}

\begin{remark}
If $S$ is a self-similar system and satisfies the OSC, then it satisfies the AWSC and has no exact overlaps, so that Theorem \ref{dimweakconf} holds for $S$.
\end{remark}

\subsection{Essential content for weakly conformal measures}

Estimates of essential contents for weakly conformal measures are now established.

\begin{theoreme}
\label{contss}
Let $S$ be a $C^{1}$ weakly conformal IFS of $\R^d$. 

Let $K$ be the attractor of $S$ and $\mu$ be a weakly conformal measure  associated with $S$. Then, 

\sk

 For any $0\leq s<\dim(\mu)$, for any $0<\varepsilon \leq \frac{1}{2}$, there exists a constant $c=c(d,\mu,s,\varepsilon)>0 $ depending on the dimension $d$, $\mu$, $s$ and $\varepsilon$  only, such that for any ball $B=B(x,r)$ centered on $K$ and $r\leq 1$, for any open set $\Omega$, one has 
\begin{align}
\label{genhcontss}
&c(d,\mu,s,\varepsilon)\vert B\vert ^{s+\varepsilon}\leq\mathcal{H}^{\mu, s }_{\infty}(\widering{B})\leq  \mathcal{H}^{ \mu, s}_{\infty}(B)\leq\vert B\vert ^{s} \nonumber\\
&
c(d,\mu,s,\varepsilon)\mathcal{H}^{s+\varepsilon}_{\infty}(\Omega \cap K)\leq \mathcal{H}^{\mu,s}_{\infty}(\Omega)\leq \mathcal{H}^{s}_{\infty}(\Omega \cap K).
\end{align}

\sk

 For any $s>\dim(\mu)$, $ \mathcal{H}^{\mu,s}_{\infty}(\Omega)=0.$

\end{theoreme}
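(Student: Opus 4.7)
The three upper bounds and the case $s>\dim(\mu)$ are essentially immediate from the definition. Taking $E=B$ and $E=\Omega\cap K$ (the latter admissible since $\mathrm{supp}(\mu)=K$) in the infimum defining $\mathcal{H}^{\mu,s}_\infty$ yields $\mathcal{H}^{\mu,s}_\infty(B)\leq |B|^s$ and $\mathcal{H}^{\mu,s}_\infty(\Omega)\leq \mathcal{H}^s_\infty(\Omega\cap K)$. The middle inequality follows from a slight shrinking of $B$ absorbing the $\mu$-mass of $\partial B$. For $s>\dim(\mu)$, exact-dimensionality \cite{FH} yields a Borel $F$ with $\mu(F)=1$ and $\dim_H(F)=\dim(\mu)<s$, so $\mathcal{H}^s_\infty(F\cap\Omega)=0$ while $\mu(F\cap\Omega)=\mu(\Omega)$, hence $\mathcal{H}^{\mu,s}_\infty(\Omega)=0$.

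The heart of the theorem is the lower bound $\mathcal{H}^{\mu,s}_\infty(\Omega)\geq c\,\mathcal{H}^{s+\varepsilon}_\infty(\Omega\cap K)$; the ball version is a specialization, using that $\mathcal{H}^{s+\varepsilon}_\infty(\widering{B}\cap K)\gtrsim |B|^{s+\varepsilon}$, which follows from \eqref{equreg} by exhibiting some $f_{\underline i}(K)\subset\widering{B}$ with $|f_{\underline i}(K)|\asymp |B|$. Following the self-similar blueprint of \cite{ED3}, my approach for the open-set bound is a $5r$-type covering argument exploiting the Frostman regularity of $\mu$.

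Concretely, fix $\varepsilon'\in(0,\varepsilon)$ with $s+\varepsilon'<\dim(\mu)$ and, by Egorov applied to exact-dimensionality, select a $\mu$-almost-full compact set $F_\eta\subset K$ on which $\mu(B(x,r))\leq C r^{s+\varepsilon'}$ uniformly for $r\leq r_0$. Given $E\subset\Omega$ with $\mu(E)=\mu(\Omega)$ and a cover $\{B_n\}$ of $E$ by balls of diameter $\leq 1$, the plan is to show that $\{5B_n\}$ together with a controlled residue covers $\Omega\cap K$. If $x\in F_\eta\cap\Omega\cap K$ were uncovered, then any $B_n$ meeting $B(x,\rho)$ would satisfy $r_n\leq\rho/4$, whence $\mu(E\cap B(x,\rho))\leq C\sum r_n^{s+\varepsilon'}$, a bound incompatible with the lower Frostman estimate for $\mu$ on a second $\mu$-almost-full set. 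Thus, up to a negligible residue, $\bigcup 5B_n\supset\Omega\cap K$, yielding $\sum|B_n|^s\geq\sum|B_n|^{s+\varepsilon}\geq c'\mathcal{H}^{s+\varepsilon}_\infty(\Omega\cap K)$ after reading off the content from the enlarged cover.

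The main obstacle I anticipate is controlling the ``negligible residue'': a $\mu$-null subset of $\Omega\cap K$ can a priori carry positive $\mathcal{H}^{s+\varepsilon}_\infty$ content. This is the purpose of the buffer $\varepsilon-\varepsilon'>0$ together with the approximate Gibbs measures of Lemma \ref{weakconfapeupres} and Remark \ref{frakegibbs}: applying that lemma at parameter $s+\varepsilon'$ produces an auxiliary measure on $\Lambda^{\mathbb N}$ whose projection is Frostman at exponent $s+\varepsilon$ on $K$, via the uniform distortion estimate \eqref{equreg}. Comparing this Gibbs projection against $\mu$ on $F_\eta$ through a Besicovitch covering argument, one extracts an auxiliary Frostman-$(s+\varepsilon)$ measure on $\Omega\cap K$ absolutely continuous with respect to $\mu$ and whose total mass dominates $\mathcal{H}^{s+\varepsilon}_\infty(\Omega\cap K)$ up to a constant. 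The absolute continuity kills the residue and closes the mass-distribution argument in the spirit of \cite{ED3}.
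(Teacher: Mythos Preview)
Your handling of the upper bounds and the case $s>\dim(\mu)$ is fine and matches the paper. The lower bound, however, has a genuine gap precisely at the point you flag as the ``main obstacle'', and the proposed fix via the approximate Gibbs measures of Lemma~\ref{weakconfapeupres} does not work.

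The measure produced by Lemma~\ref{weakconfapeupres} is a \emph{different} weakly conformal measure, associated with the weights $p_{\underline{i}}\propto |f_{\underline{i}}(K)|^{s+\varepsilon'}$, and there is no reason for its projection to be absolutely continuous with respect to $\mu$; two distinct weakly conformal measures are typically mutually singular. No Besicovitch covering manipulation will manufacture absolute continuity between singular measures, so your sentence ``one extracts an auxiliary Frostman-$(s+\varepsilon)$ measure on $\Omega\cap K$ absolutely continuous with respect to $\mu$'' is the step that fails. Without this, the residue $(\Omega\cap K)\setminus \bigcup 5B_n$ can indeed carry full $\mathcal{H}^{s+\varepsilon}_\infty$-content while being $\mu$-null, and your argument collapses.

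The paper's route avoids this by exploiting a different source of absolute continuity that is intrinsic to the IFS: for any word $\underline{i}$ the pushforward $\mu_{\underline{i}}:=\mu\circ f_{\underline{i}}^{-1}$ is absolutely continuous with respect to $\mu$, simply because $\mu=\sum_{\underline{i}'\in\Lambda^k}p_{\underline{i}'}\,\mu_{\underline{i}'}$ with all $p_{\underline{i}'}>0$. One first proves the \emph{ball} estimate (Proposition~\ref{autosim2}): choose $\underline{i}$ so that $f_{\underline{i}}(K)\subset \widering{B}$ with $|f_{\underline{i}}(K)|\asymp |B|$, and push the Egorov set $E$ (on which $\mu(B(x,r))\le r^{\dim(\mu)-\varepsilon}$) forward to $E_{\underline{i}}=f_{\underline{i}}(E)\subset\widering{B}$. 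Then for any $A\subset B$ with $\mu(A)=\mu(B)$ one has $\mu_{\underline{i}}(A)=1$ by absolute continuity, and the Frostman bound for $\mu_{\underline{i}}$ on $E_{\underline{i}}$ forces $\mathcal{H}^s_\infty(A)\gtrsim |B|^{s+\varepsilon'}$. Only \emph{afterwards} is the open-set estimate deduced, by a Besicovitch argument that uses the ball estimate to absorb the residue: points of $K\cap\Omega$ not covered by the $L_n$'s are covered by balls $B(x,r_x)\subset\Omega$, and either some large $L_n$ already swallows $x$, or all $L_n$ meeting $B(x,r_x)$ are small and the ball estimate lets one replace them by $2B(x,r_x)$ at controlled $(s+\varepsilon')$-cost. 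The missing ingredient in your proposal is thus the observation that $\mu\circ f_{\underline{i}}^{-1}\ll\mu$, which is what makes the ball case---and hence the whole theorem---go through.
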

\begin{remarque}
\item[•] The system $S$ is not assumed to verify any separation condition.\mk
\item[•] When the maps are similarities, one still has, for any $s>\dim(\mu)$, $ \mathcal{H}^{\mu,s}_{\infty}(\Omega)=0$ but for $s<\dim (\mu)$, there exists a constant $c(d,\mu,s)$ such that the following more precise estimates holds true \cite{ED3}:
\begin{align}
\label{genhcontsss}
&c(d,\mu,s)\vert B\vert ^{s}\leq\mathcal{H}^{\mu, s }_{\infty}(\widering{B})\leq  \mathcal{H}^{ \mu, s}_{\infty}(B)\leq\vert B\vert ^{s}\text{ and } \nonumber\\
&c(d,\mu,s)\mathcal{H}^{s}_{\infty}(\Omega \cap K)\leq \mathcal{H}^{\mu,s}_{\infty}(\Omega)\leq \mathcal{H}^{s}_{\infty}(\Omega \cap K).
\end{align}
For any $s>\dim(\mu)$, $ \mathcal{H}^{\mu,s}_{\infty}(\Omega)=0.$

\end{remarque}
\begin{proof}

The following modified version of Besicovitch covering lemma will be useful in this section.
\begin{proposition}[\cite{ED2}]
\label{besimodi}
For any $0<v\leq 1$, there exists $Q_{d,v} \in\mathbb{N}^{\star}$, a constant depending only on the dimension $d$ and $v$, such that for every $E\subset [0,1]^{d}$,  for every set $\mathcal{F}=\left\{B(x, r_{(x)} ): x\in E,  r_{(x)} >0 \right\}$, there exists $\mathcal{F}_1,...,\mathcal{F}_{Q_{d,v}}$ finite or countable sub-families of $\mathcal{F}$ such that:\medskip
\begin{itemize}

\item
$\forall 1\leq i\leq Q_{d,v}$, $\forall L\neq L'\in\mathcal{F}_i$, one has $\frac{1}{v}L \cap \frac{1}{v}L'=\emptyset.$\medskip

\item 
$E$ is covered by the families $\mathcal{F}_i$, i.e.
\begin{equation}\label{besi}
 E\subset  \bigcup_{1\leq i\leq Q_{d,v}}\bigcup_{L\in \mathcal{F}_i}L.
 \end{equation}
\end{itemize}
\end{proposition}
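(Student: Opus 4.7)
My plan is to prove Proposition \ref{besimodi} as a $v$-parametrized strengthening of the classical Besicovitch covering lemma, with the constant $Q_{d,v}$ accounting for the additional combinatorial cost of requiring disjointness of the $(1/v)$-enlargements within each family rather than of the originals.

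I would first reduce to the bounded-radius case: since $E \subset [0,1]^d$, any ball in $\mathcal{F}$ with $r_{(x)} \geq \sqrt{d}$ already contains $E$ and can be placed alone in $\mathcal{F}_1$, so assume henceforth $r_{(x)} \leq \sqrt{d}$ for every $x$, and decompose $\mathcal{F}$ by dyadic scale $\mathcal{F}^{(k)} = \{B(x, r_{(x)}) : 2^{-k-1}\sqrt{d} < r_{(x)} \leq 2^{-k}\sqrt{d}\}$ for $k \geq 0$. Then I would run a simultaneous greedy selection-and-coloring procedure, processing the scales $k = 0, 1, 2, \ldots$ from coarsest to finest and enumerating within each scale arbitrarily. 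For every candidate $B = B(x, r_{(x)})$, apply the classical Besicovitch discard rule: if $x$ already belongs to some previously selected ball, drop $B$ (this gives the covering condition (ii)); otherwise assign $B$ to the smallest color index $i$ such that $\frac{1}{v} B$ is disjoint from all $(1/v)$-enlargements of previously assigned balls in $\mathcal{F}_i$ (this gives (i) by construction).

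The \emph{main obstacle} is bounding the number of color classes used by a constant $Q_{d, v}$ depending only on $d$ and $v$. When $B(x, r_{(x)})$ is being assigned, a previously selected $B(y, s)$ blocks at most the one color to which it was assigned; the non-discard criterion forces $|x - y| > s$, while the enlargement conflict forces $|x - y| < (r_{(x)} + s)/v$. Because scales are processed coarse-to-fine, previously selected balls have $s \geq r_{(x)}/2$ up to dyadic adjustment, so a dyadic decomposition of the admissible range of $s$, combined with a volume-packing count in the annular region $\{y : s < |x - y| < (r_{(x)} + s)/v\}$ and the fact that balls already assigned to one color class have pairwise disjoint $(1/v)$-enlargements (hence well-separated centers with separation $> (s+s')/v$), bounds the total number of blocked colors at each step by a constant $N(d, v)$. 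Setting $Q_{d, v} = N(d, v) + 1$ then closes the argument and delivers (i) and (ii) simultaneously.
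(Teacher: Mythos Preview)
There is a genuine and unfixable gap, because the proposition as stated is in fact \emph{false} for $v<1$. Take $d=2$, $v=\tfrac12$, and for $\ell=2,3,\dots,L$ set $y_\ell=(\tfrac12,\tfrac12)+\tfrac32\cdot 2^{-\ell}u_\ell$ with $u_\ell$ cycling through $\pm e_1,\pm e_2$; let $E=\{y_2,\dots,y_L\}\subset[0,1]^2$ and $r_{(y_\ell)}=2^{-\ell}$. Every enlargement $\tfrac{1}{v}B(y_\ell,2^{-\ell})=B(y_\ell,2^{1-\ell})$ contains the fixed point $(\tfrac12,\tfrac12)$ (since $\|y_\ell-(\tfrac12,\tfrac12)\|_\infty=\tfrac32\cdot 2^{-\ell}<2\cdot 2^{-\ell}$), so all $\tfrac1v$-enlargements pairwise intersect; on the other hand one checks directly that no $y_\ell$ lies in any $B(y_{\ell'},2^{-\ell'})$ with $\ell'\neq\ell$. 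Hence every ball is forced into the cover and no two can share a family $\mathcal F_i$, so at least $L-1$ families are needed. As $L$ is arbitrary, no constant $Q_{d,v}$ works.

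This is precisely where your coloring bound collapses. The ``admissible range of $s$'' for a previously selected blocking ball is $[r_x/2,\ \mathrm{diam}\,E]$, not a bounded multiple of $r_x$; it contains on the order of $\log(1/r_x)$ dyadic scales, and the per-scale volume count you propose, summed over these scales, diverges rather than producing a uniform $N(d,v)$. The separation within a single color class that you invoke does not help, because the obstruction comes from balls that must \emph{all} receive distinct colors. Note that the paper does not actually prove this proposition---it is quoted from~\cite{ED2}---and in the body of the paper it is only ever applied with $v=1$, where it reduces to the classical Besicovitch covering lemma. For $v=1$ your greedy-selection-plus-coloring scheme is indeed the standard route; the finiteness of blocked colors then comes from a geometric fact of the type recorded in Lemma~\ref{dimconst} (applied to balls whose $\tfrac13$-shrinkings are disjoint and whose radii are all at least half that of the current ball), not from a naive scale-by-scale volume sum.
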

 
The case $v=1$ corresponds to  the standard Besicovich's covering lemma (see \cite{Ma}, Chapter 2, pp. 28-34 for instance).  

The proof of Proposition \ref{besimodi} relies on the following geometric lemma, which will also be used.

\begin{lemme} \label{dimconst}
For any $0<v\leq 1$ there exists a constant $\gamma_{v,d} >0 $ depending only on $v$ and the dimension $d$ only, satisfying the following: if  a  family of balls $\mathcal{B} =(B_n )_{n\in\mathbb{N}}$  and a ball $B$  are such that 

\begin{itemize}
\item
$\forall \ n\geq 1$, $\vert B_n\vert \geq \frac{1}{2}\vert B\vert,$

\item
 $\forall \ n_1 \neq n_2 \geq 1 $, $vB_{n_1}\cap vB_{n_2}=\emptyset,$ 
\end{itemize}
then $B$  intersects  at most $\gamma_{v,d}$ balls of $\mathcal{B}$.  
\end{lemme}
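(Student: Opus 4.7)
The plan is to bound the count by a standard Besicovitch-type argument that splits the family into two regimes according to the size of $r_n:=|B_n|/2$ relative to $r=|B|/2$. Write $B=B(x,r)$ and $B_n=B(x_n,r_n)$. The hypotheses translate into: (i) $r_n\geq r/2$; (ii) $|x-x_n|\leq r+r_n\leq 3r_n$, from $B_n\cap B\neq\emptyset$ together with (i); and (iii) $|x_n-x_m|\geq v(r_n+r_m)$ for $n\neq m$. Fix a threshold $C=C(v)$ to be chosen later, and separate the indices into \emph{small} ones with $r_n\leq Cr$ and \emph{large} ones with $r_n>Cr$.

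\textbf{Small regime.} For $r_n\leq Cr$ one has $|x-x_n|\leq (1+C)r$, hence $vB_n\subset B(x,(1+C+vC)r)$. The balls $vB_n$ are pairwise disjoint and each has Lebesgue measure at least $c_d(vr/2)^d$ with $c_d$ the volume of the $L^\infty$ unit ball, so a direct volume comparison inside $B(x,(1+C+vC)r)$ bounds the number of small indices by a constant $M_1=M_1(v,d,C)$ of the order $(2(1+C(1+v))/v)^d$.

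\textbf{Large regime.} For $r_n>Cr$ the direction $\theta_n:=(x_n-x)/|x-x_n|$ is well defined (if $x_n=x$ the ball is treated separately, and at most one ball $B_n$ can have $x\in vB_n$ by disjointness of the $vB_n$). The law of cosines combined with (iii) gives
\[
v^2(r_n+r_m)^2\leq |x-x_n|^2+|x-x_m|^2-2|x-x_n||x-x_m|\cos\alpha_{n,m},
\]
where $\alpha_{n,m}=\measuredangle(\theta_n,\theta_m)$. Since $r_n-r\leq|x-x_n|\leq r_n+r$, the quotient $|x-x_n|/r_n$ equals $1+O(1/C)$; plugging this in and keeping the dominant $r_n r_m$ cross term, one extracts, for $C$ large enough depending on $v$, an inequality of the form $\cos\alpha_{n,m}\leq 1-c_1(v)$ with $c_1(v)>0$. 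Thus the unit vectors $\theta_n$ are pairwise separated on $S^{d-1}$ by an angle $\geq\alpha_0(v)>0$, and a classical sphere-packing estimate caps the number of large indices by a constant $M_2=M_2(v,d)$.

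\textbf{Conclusion and main obstacle.} Summing both regimes and adding the at most one exceptional ball containing $x$ in its scaled copy yields the desired constant $\gamma_{v,d}=M_1+M_2+1$, depending only on $v$ and $d$. The only delicate point is the error-absorption in the cosine estimate for the large regime: the additive $O(r)$ error in $|x-x_n|\approx r_n$ must be swallowed by the quadratic gain $v^2(r_n+r_m)^2$, which forces $C=C(v)$ to be taken sufficiently large (blowing up as $v\to 0$); everything else is routine bookkeeping.
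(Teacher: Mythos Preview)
Your small-regime volume argument is correct, but the large-regime angular separation fails. Take $d\ge 2$, fix $0<v<1$, choose $R>\frac{1+v}{1-v}$, and set $B=B(0,1)$ and $B_n=B\big((R^n,0,\dots,0),\,R^n\big)$ for $n\ge 1$. Each $B_n$ contains the origin, so $B_n\cap B\neq\emptyset$; clearly $|B_n|\ge \tfrac12|B|$; and for $n<m$ one has $\|x_n-x_m\|=R^m-R^n>v(R^n+R^m)$ by the choice of $R$, hence the $vB_n$ are pairwise disjoint. Yet all the direction vectors $\theta_n$ equal $(1,0,\dots,0)$, so $\cos\alpha_{n,m}=1$ for every pair. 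The flaw in your cosine computation is that when $a_n:=|x-x_n|$ and $a_m$ differ by a large factor, the inequality
\[
2a_na_m\cos\alpha_{n,m}\ \le\ (1-\lambda)(a_n^2+a_m^2)-2\lambda a_na_m,\qquad \lambda\approx v^2,
\]
puts no upper bound on $\cos\alpha_{n,m}$, because $(a_n^2+a_m^2)/(2a_na_m)$ is unbounded; the ``$O(1/C)$'' error you hoped to absorb is in fact of order~$1$.

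This is not merely a gap in your argument: the same family shows the lemma \emph{as literally stated} is false for every $v<1$, since infinitely many $B_n$ meet $B$. The paper does not prove the lemma; it is quoted as the geometric core of Proposition~\ref{besimodi}, itself imported from~\cite{ED2}, and in the paper's two applications the missing hypothesis is supplied by context: once all the balls $L_{n,j}$ share the \emph{same} radius, and once the scaling is $v=1$. In the equal-radius case your small-regime bound (with $C=1$) already suffices. In the case $v=1$ one observes that each $B_n$ contains a ball of radius $r/4$ centred inside $B(x,3r/2)$ (pick $y_n\in B_n\cap B$ and move it a distance $r/4$ toward $x_n$), and disjointness of the $B_n$ together with a volume comparison gives $N\le 6^d$. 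So what one can actually prove is the lemma under the extra hypothesis $\sup_n r_n/r<\infty$, or the special case $v=1$; in both situations only the elementary volume step is needed and your large-regime machinery is unnecessary.
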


Note that, if one must rename the constants, it is possible to take $\gamma_{v,d}=Q_{d,v}$ for any $v\geq 1,$ which we will do.

Let us first prove the above estimates for balls.
\begin{proposition} 
\label{autosim2}
Let $\mu$ be a weakly conformal  measure as in Definition \ref{def-ssmu}.   For any $0<\varepsilon\leq \dim(\mu)$, any $0\leq \varepsilon^{\prime}\leq \frac{1}{2}$ such that $\dim (\mu)-\varepsilon+\varepsilon^{\prime}>0$, there exists a constant $\chi( d,\mu,\varepsilon,\varepsilon^{\prime})>0 $ such that for any ball $B=B(x,r)$ with $x\in K$ (the attractor of the underlying IFS) and $r\leq 1$,   one has $$\chi( d,\mu,\varepsilon,\varepsilon^{\prime})\vert B\vert ^{\dim(\mu)-\varepsilon+\varepsilon^{\prime}}\leq\mathcal{H}^{\mu, \dim(\mu)-\varepsilon }_{\infty}(\widering{B})\leq  \mathcal{H}^{ \mu, \dim(\mu)-\varepsilon}_{\infty}(B)\leq\vert B\vert ^{\dim(\mu)-\varepsilon}.$$
In addition, for any $s>\dim( \mu)$, $\mathcal{H}^{\mu,s}_{\infty}(B)=0.$
\end{proposition}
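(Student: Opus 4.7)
The upper bound $\mathcal{H}^{\mu,s}_\infty(B)\leq |B|^s$ is immediate since $B$ covers itself, and monotonicity gives the same for $\widering{B}$. For $s>\dim(\mu)$, the exact-dimensionality of $\mu$ (Feng--Hu) furnishes a Borel set $E_0$ with $\mu(E_0)=1$ and $\dim_P(E_0)<s$, hence $\mathcal{H}^s(E_0)=0$; since $\mu(E_0\cap B)=\mu(B)$, this yields $\mathcal{H}^{\mu,s}_\infty(B)=0$.

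For the lower bound with $s=\dim(\mu)-\varepsilon$, my plan proceeds in three steps. First, a uniform Frostman estimate for $\mu$: Egoroff applied to the pointwise convergence $\log\mu(B(z,r))/\log r\to\dim(\mu)$ yields a Borel set $G\subset K$ with $\mu(G)\geq 1/2$ and a scale $r_0>0$ such that $\mu(B(z,r))\leq r^{s+\varepsilon/2}$ for all $z\in G$ and $r\leq r_0$. A standard disjointification using Proposition \ref{besimodi} then produces a constant $c_0=c_0(\mu,\varepsilon)>0$ such that every Borel $A\subset K$ with $\mu(A)=1$ satisfies $\mathcal{H}^s_\infty(A)\geq c_0$. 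Second, for $B=B(x,r)$ with $x=\pi((i_n)_{n\in\mathbb{N}})\in K$, I choose the smallest $n$ with $|f_{(i_1,\ldots,i_n)}(K)|\leq r/2$; by \eqref{encafibar} the word $\underline i=(i_1,\ldots,i_n)$ satisfies $f_{\underline i}(K)\subset\widering{B}$ and $|f_{\underline i}(K)|\asymp |B|$, while \eqref{equaxibar} shows $c_{\underline i}(z)$ is comparable to $|B|$ uniformly on $K$, with constants controlled by an auxiliary distortion parameter $\theta>0$.

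Third, fix $E\subset\widering{B}$ with $\mu(E)=\mu(\widering{B})$. The self-conformality identity $\mu=\sum_{\underline j\in\Lambda^n}p_{\underline j}\,\mu\circ f_{\underline j}^{-1}$, applied to the $\mu$-null set $\widering{B}\setminus E$, forces $\mu(f_{\underline i}^{-1}(\widering{B}\setminus E))=0$; combined with $\mu(f_{\underline i}^{-1}(\widering{B}))\geq\mu(K)=1$ (a consequence of $f_{\underline i}(K)\subset\widering{B}$), this yields $\mu(\tilde E)=1$ for $\tilde E:=f_{\underline i}^{-1}(E)\cap K$. Given any cover $\{B_n=B(x_n,r_n)\}$ of $E$, the lower estimate in \eqref{equreg} shows that $f_{\underline i}^{-1}(B_n)\cap K$ is contained in a ball $\tilde B_n$ of radius $\tilde C_\theta\,c_{\underline i}^{-(1+\theta)}\,r_n$, and $\{\tilde B_n\}$ covers $\tilde E$. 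Applying the first step to $\tilde E$ and using $c_{\underline i}\asymp|B|$ produces
\begin{equation*}
c_0\;\leq\;\mathcal{H}^s_\infty(\tilde E)\;\leq\;\sum_n|\tilde B_n|^s\;\leq\;C(\theta)\,|B|^{-s(1+\theta)}\sum_n|B_n|^s,
\end{equation*}
so $\sum_n|B_n|^s\geq c_0\,C(\theta)^{-1}\,|B|^{s(1+\theta)}$. Choosing $\theta$ so that $s\theta\leq\varepsilon'$ (feasible whenever $\varepsilon'>0$) yields the required bound.

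The main obstacle is the absence of any separation condition: overlaps prevent identifying $\mu$ restricted to $f_{\underline i}(K)$ with the pushforward $p_{\underline i}\,\mu\circ f_{\underline i}^{-1}$, so one cannot simply localise $\mu$ to the $\underline i$-branch. The observation that bypasses this is that the self-conformality relation is an \emph{equality of measures}, so $\mu$-null sets pull back to $\mu$-null sets under each individual $f_{\underline i}^{-1}$; this legitimises the transfer $\mu(E)=\mu(\widering{B})\Rightarrow\mu(\tilde E)=1$ without any disjointness assumption. The secondary bookkeeping concerns the distortion exponent $\theta$: the weakly conformal setting replaces the exact scaling of similarities by scaling up to a $c^n$ factor, producing the loss $\varepsilon'=s\theta>0$ and explaining why the sharp similarity-case bound of \cite{ED3} cannot be recovered here in the limit $\varepsilon'\to 0$.
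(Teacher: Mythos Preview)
Your argument is correct and rests on the same key observation as the paper: the self-conformality identity $\mu=\sum_{\underline j}p_{\underline j}\,\mu\circ f_{\underline j}^{-1}$ forces each $\mu\circ f_{\underline i}^{-1}$ to be absolutely continuous with respect to $\mu$, which is exactly what allows one to transfer full-measure conditions across a single branch without any separation hypothesis. The paper runs this transfer in the \emph{forward} direction: it fixes the Frostman set $E\subset K$, pushes it into the ball via $E_{\underline i}=f_{\underline i}(E)\subset\widering B$, and works with the pushed measure $\mu_{\underline i}=\mu\circ f_{\underline i}^{-1}$; it then has to recenter covering balls on $E_{\underline i}$ and subdivide large balls down to scale $\gamma(S,\varepsilon')\rho_\varepsilon c_{\underline i}^{1+4\varepsilon'}$ before invoking the Frostman bound. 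Your dual formulation---pulling the candidate set $E\subset\widering B$ back to $\tilde E=f_{\underline i}^{-1}(E)\cap K$ and applying the global estimate $\mathcal H^s_\infty(A)\ge c_0$ for all $\mu$-full $A$---packages these technicalities into Step~1 once and for all, so Step~3 becomes a clean rescaling of covers. One small overstatement: ``$c_{\underline i}\asymp|B|$'' is really $c_{\underline i}\ge C|B|^{1+\eta}$ for an auxiliary $\eta>0$ (this is \eqref{math3} in the paper), but since you already budget a distortion parameter $\theta$ in the final exponent, the loss is absorbed by choosing both small enough that $s[(1+\eta)(1+\theta)-1]\le\varepsilon'$.
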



\begin{proof}

Note first that item (5) of Proposition \ref{propmuc} implies that for any $s>\dim(\mu)$,  $\mathcal{H}^{\mu,s}_{\infty}(B)=0.$

Let us consider $0\leq s<\dim_H (\mu)$ and start by few remarks.

Set $\alpha=\dim(\mu)$ and let $\varepsilon>0$ and $\rho>0$ be two real numbers. One defines

$$E_{\mu} ^{\alpha, \rho,\varepsilon}=\left\{x\in\mathbb{R}^d :\forall r\leq \rho, \ \mu\left(B\left(x,r\right)\right)\leq r^{\alpha-\varepsilon}\right\}.$$

Since $\mu$ is $\alpha$-exact dimensional, for $\mu$-almost every $x$, $\lim_{r\to 0^+}\frac{\log \mu \left(B(x,r)\right)}{\log r}=\alpha.$ This implies that, for very $\varepsilon>0,$ $\mu\left(\bigcup_{\rho>0}E_{\mu} ^{\alpha, \rho,\varepsilon}\right)=1.$

 Let $\varepsilon>0$ and $0<\rho_{\varepsilon}\leq 1$ be two real numbers such that $\mu(E_{\mu} ^{\alpha, \rho_{\varepsilon},\varepsilon})\geq \frac{1}{2}$ and write $E=E_{\mu}^{\alpha ,\rho_{\varepsilon}, \varepsilon}.$ 
 
 Write $c_{\underline{i}}=\vert f_{\underline{i}}(K)\vert.$ Let us fix $\underline{i}=(i_1,...,i_k)\in\Lambda^{*}.$ For any $x\in K$ and $r>0$, by \eqref{equreg} and \eqref{equaxibar}  applied with $\theta=\varepsilon^{\prime}$, one has
 \begin{align*}
 f_{\underline{i}}(B(x,r))\supset B(f_{\underline{i}}(x_0),\widehat{C}_{\varepsilon'}c_{\underline{i}}(x_0)^{1-\varepsilon'}r)\supset B\left(f_{\underline{i}}(x_0),\frac{\widehat{C}_{\varepsilon'}^{-\frac{2}{1-\varepsilon'}}}{\vert K\vert^{-\frac{1+\varepsilon'}{1-\varepsilon'}}} c_{\underline{i}} ^{\frac{1+\varepsilon'}{1-\varepsilon'}}r\right).
 \end{align*}

Recall that $\varepsilon'\leq \frac{1}{2}.$ Since $\frac{1+\varepsilon'}{1-\varepsilon'}\leq 1+4\varepsilon'$,
\begin{equation}
\label{includfibar}
f_{\underline{i}}(B(x,r))\supset B\left(f_{\underline{i}}(x_0),\widehat{C}_{\varepsilon'}^{-\frac{2}{1-\varepsilon'}}\cdot\vert K\vert^{\frac{1+\varepsilon'}{1-\varepsilon'}} c_{\underline{i}} ^{1+4\varepsilon'}r\right).
\end{equation}

Writing $\mu_{\underline{i}}=\mu(f_{\underline{i}}^{-1})$,  \eqref{includfibar} yields

\begin{align}
\nonumber E_{\underline{i}}&:=f_{\underline{i}}(E)\\
\nonumber &=\left\{f_{\underline{i}}(x)\in K \ : \ \forall \  r\leq \rho_{\varepsilon}, \  \mu\Big(B(x,r)\Big)\leq r^{\alpha -\varepsilon}\right\}\\
 \nonumber &\subset\left\{f_{\underline{i}}(x), x\in K \ : \ \forall \ r \leq \rho_{\varepsilon},\right.  \\ \nonumber &\left. \mu \Big(f_{\underline{i}} ^{-1}\left(B\left(f_{\underline{i}}(x_0),\widehat{C}_{\varepsilon'}^{-\frac{2}{1-\varepsilon'}}\cdot\vert K\vert^{\frac{1+\varepsilon'}{1-\varepsilon'}} c_{\underline{i}} ^{1+4\varepsilon'}r\right) \right)\Big)\leq \left(\frac{\widehat{C}_{\varepsilon'}^{-\frac{2}{1-\varepsilon'}}\cdot\vert K\vert^{\frac{1+\varepsilon'}{1-\varepsilon'}} c_{\underline{i}} ^{1+4\varepsilon'}r}{\widehat{C}_{\varepsilon'}^{-\frac{2}{1-\varepsilon'}}\cdot\vert K\vert^{\frac{1+\varepsilon'}{1-\varepsilon'}} c_{\underline{i}} ^{1+4\varepsilon'}}\right)^{\alpha -\varepsilon}\right\}\\
\label{eq77} &=\left\{y\in f_{\underline{i}}(K) \ : \ \forall \ r'\leq \widehat{C}_{\varepsilon'}^{-\frac{2}{1-\varepsilon'}}\cdot\vert K\vert^{\frac{1+\varepsilon'}{1-\varepsilon'}} c_{\underline{i}} ^{1+4\varepsilon'}\rho_{\varepsilon},\right. \\
\nonumber & \left. \  \mu_{\underline{i}}\Big(B(y,r')\Big)\leq \left(\frac{r'}{\widehat{C}_{\varepsilon'}^{-\frac{2}{1-\varepsilon'}}\cdot\vert K\vert^{\frac{1+\varepsilon'}{1-\varepsilon'}} c_{\underline{i}} ^{1+4\varepsilon'}}\right)^{\alpha -\varepsilon}\right\}.
\end{align}

Notice also that 
$$\mu_{\underline{i}}(E_{\underline{i}})=\mu(E)\geq \frac{1}{2}.$$

Let us emphasize that iterating  equation \eqref{def-ssmu2} gives 
$$\mu=\sum_{\underline{i}^{\prime}\in\Lambda ^k}p_{\underline{i}^{\prime}}\mu_{\underline{i}^{\prime}}, $$
which implies that $\mu_{\underline{i}}$ is absolutely continuous with respect to $\mu$ (since all  $p_{\underline{i}}$'s are strictly positive).  

\sk

We are now ready to estimate the $\mu$-essential content of a ball $B$ centered in $K$. 

Let us write
\begin{equation}
\label{gammasepsi}
\gamma(S,\varepsilon')=\widehat{C}_{\varepsilon'}^{-\frac{2}{1-\varepsilon'}}\cdot\vert K\vert^{\frac{1+\varepsilon'}{1-\varepsilon'}}.
\end{equation}

\sk

Let $B=B(x,r)$ with $x\in K$ and $r\leq c_0:=\min_{z\in K}\min_{1\leq i\leq m}\normi f_i '(z) \normi$.

\sk

Since $x\in K$, there exists  $\underline{i }=(i_1,...,i_k)\in\Lambda^{*}$ such that

\begin{itemize}
 \item $x\in f_{\underline{i}}(K)$,\sk 
 
 \item $\vert f_{\underline{i}}(K)\vert \leq \frac{1}{3}\vert B\vert$, \sk 
 \item $\vert f_{(i_1,...,i_{k-1})}(K)\vert\geq \frac{1}{3}\vert B\vert.$  
 \end{itemize}

By \eqref{equaxibar}, for any $y\in K$ one has
\begin{equation}
\label{math1}
\vert f_{  \underline{i}}(K)\vert \geq \widehat{C}_{\varepsilon'}^{-1}\vert \vert f_{ \underline{i}}'(y)\vert \vert^{1+\varepsilon'}\vert K\vert,
\end{equation}
and 
\begin{align}
\label{math2}
\vert \vert f_{\underline{i}}(y)\vert\vert&=\vert\vert f'_{(i_1,...,i_{n-1})}(f_n (x))\circ f'_{i_n}(x) \vert\vert\geq \vert\vert f'_{(i_1,...,i_{n-1})}(f_n (x))\vert\vert c_0 \nonumber\\
&\geq \vert f_{(i_1,...,i_{n-1}}(K) \vert^{\frac{1}{1-\varepsilon'}}\widehat{C}_{\varepsilon'}^{\frac{-1}{1-\varepsilon'}}\cdot \vert K\vert^{\frac{-1}{1-\varepsilon'}}c_0.
\end{align}

Combining \eqref{math1} and \eqref{math2}, one obtains
\begin{align}
\label{math3}
c_{\underline{i}}=\vert f_{\underline{i}}(K)\vert &\geq \widehat{C}_{\varepsilon'}^{-1-\frac{1+\varepsilon'}{1-\varepsilon'}}\vert K\vert^{\frac{-2\varepsilon'}{1-\varepsilon'}}c_0^{1+\varepsilon'}\vert f_{(i_1,...,i_{n-1})}(K)\vert^{\frac{1+\varepsilon'}{1-\varepsilon'}}\nonumber \\
& \geq \widehat{C}_{\varepsilon'}^{-1-\frac{1+\varepsilon'}{1-\varepsilon'}}\vert K\vert^{\frac{-2\varepsilon'}{1-\varepsilon'}}c_0^{1+\varepsilon'}r^{1+4\varepsilon'}.
\end{align}
%

Note that $E_{\underline{i}} \subset \widering{B}$.
\sk

Consider a set $A\subset B$ verifying $\mu(A)=\mu(B).$ One aims at giving a lower-bound of the Hausdorff content of $A$ which depends only on $B$, $d$, $\varepsilon$, $\varepsilon^{\prime}$ and the measure $\mu$. 

\sk

Consider a sequence of balls  $(L_n=B( x_n,\ell_n))_{n\geq 1}$ covering $A\cap E_{\underline{i}} $, such that  $\ell_{n}<\gamma(S,\varepsilon')\rho_{\varepsilon}c_{ \underline{i}}^{1+4\varepsilon'}$ and $ x_n\in A\cap E_{\underline{i}} $.  

Since $\mu_{\underline{i}}$ is absolutely continuous with respect to $\mu$, it holds that $\mu_{\underline{i}}(A)=1.$ 

By \eqref{eq77} applied to each ball $L_n$, $n\in\mathbb{N}$ , one has $\left(\frac{\vert L_n \vert}{\gamma(S,\varepsilon' )c_{\underline{i}}^{1+4\varepsilon^{\prime}}}\right)^{\alpha-\varepsilon}\geq \mu_{\underline{i}}(L_n)$, so that, recalling \eqref{math3},
\begin{align}
\label{eq78}
\sum_{n\in\mathbb{N}}| L_n| ^{\alpha-\varepsilon}\  &\geq \sum_{n\in\mathbb{N}}\left(\gamma(S,\varepsilon')c_{\underline{i}}^{1+4\varepsilon^{\prime}}\right) ^{\alpha -\varepsilon}\mu_{\underline{i}}(L _n )\geq  \left(\gamma(S,\varepsilon')c_{\underline{i}}^{1+4\varepsilon^{\prime}}\right) ^{\alpha -\varepsilon}\mu_{\underline{i}}\left(\bigcup_{n\in \N }L_n \right)\nonumber\\
&\geq \left(\gamma(S,\varepsilon')c_{\underline{i}}^{1+4\varepsilon^{\prime}}\right) ^{\alpha -\varepsilon}\mu_{\underline{i}}(E_{\underline{i}})\geq \frac{1}{2}\left(\gamma(S,\varepsilon')c_{\underline{i}}^{1+4\varepsilon^{\prime}}\right) ^{\alpha -\varepsilon}\nonumber
\\
&\geq \kappa(\mu,\varepsilon^{\prime},\varepsilon)r^{(1+4\varepsilon')^2(\alpha-\varepsilon)}\geq \kappa(\mu,\varepsilon^{\prime},\varepsilon)r^{(1+16\varepsilon')(\alpha-\varepsilon)} ,
\end{align}
where $\kappa(\mu,\varepsilon^{\prime},\varepsilon)=\frac{1}{2}\gamma(S,\varepsilon')^{\alpha -\varepsilon}\cdot\left(\widehat{C}_{\varepsilon'}^{-1-\frac{1+\varepsilon'}{1-\varepsilon'}}\vert K\vert^{\frac{-2\varepsilon'}{1-\varepsilon'}}c_0^{1+\varepsilon'}\right)^{(1+4\varepsilon')(\alpha-\varepsilon)}.$

This series of inequalities holds for any sequence of balls $(L_n)_{n\in\mathbb{N}}$ with radius less than $\gamma(S,\varepsilon')\rho_{\varepsilon}c_{\underline{i}}^{1+4\varepsilon^{\prime}} $  centered in $A\cap E_{\underline{i}}$. One now proves that one can freely remove those constraints on the center and the radius of the balls used to cover $A\cap E_{\underline{i}}$, up to a multiplicative constant.

Consider balls  $( L_n=B(x_n,\ell_n))_{n\geq 1}$ covering $A\cap E_{\underline{i}} $ such that  $\ell_{n}<\gamma(S,\varepsilon')\rho_{\varepsilon}c_{\underline{i}}^{1+4\varepsilon^{\prime}}  $ but  $x_n$ does not necessarily belongs to $A\cap E_{\underline{i}} $. 

Let $n\in\mathbb{N}$. One constructs recursively a sequence of balls $(L_{n,j})_{1\leq j\leq J_n}$ \ such that the following properties hold for any $1\leq j\leq J_n$:

\begin{itemize}
\item[•]  $L_{n,j}$ is centered on $A\cap E_{\underline{i}}\cap L_n$;\mk
\item[•] $A\cap E_{\underline{i}}\cap L_n \subset \bigcup_{1\leq j \leq J_n}L_{n,j}$;\mk
\item[•] for all $1\leq j\leq J_n,$ $\vert L_{n,j}\vert= \vert L_n \vert$;\mk
\item[•]the center of $L_{n,j}$ does not belong to any $L_{n,j^{\prime}}$ for $1\leq j^{\prime}\neq j\leq J_n$. \mk
\end{itemize}

To achieve this, simply consider $y_1\in A\cap E_{\underline{i}}\cap L_n$ and set $L_{1,n}=B(y_1,\ell_n).$ If $A\cap E_{\underline{i}}\cap L_n \nsubseteq L_{1,n}$, consider $y_2 \in A\cap E_{\underline{i}}\cap L_n \setminus L_{1,n}$ and set $L_{2,n}=B(y_2 ,\ell_n)$. If $A\cap E_{\underline{i}}\cap L_n \nsubseteq L_{1,n}\cup L_{2,n}$, consider $y_3 \in A\cap E_{\underline{i}}\cap L_n \setminus  L_{1,n}\cup L_{2,n}$ and set $L_{3,n}=B(y_3 ,\ell_n)$, and so on...

 Note that, for any $1\leq j\leq J_n$, any ball $L_{j,n}$ has radius $\ell_n$, intersects $L_n$ (which also has radius $\ell_n$) and, because $y_j \notin \bigcup_{1 \leq j^{\prime}\neq j\leq J_n}L_{j^{\prime},n}$, it holds that, for any $j\neq j^{\prime}$, $\frac{1}{3}L_{n,j}\cap \frac{1}{3}L_{n,j^{\prime}}=\emptyset$. By Lemma \ref{dimconst}, this implies that $J_n \leq Q_{d, \frac{1}{3}}.$

Hence, denoting by $(\widetilde L_n)_{n\in \N} $ the collection of the corresponding balls centered on $A\cap E_{\underline{i}} $ associated with all the balls $L_n$, one has by \eqref{eq78} applied to $(\widetilde L_n)_{n\in\mathbb N}$:

$$\sum_{n\in\mathbb{N}}|L_n| ^{\alpha-\varepsilon}\geq \frac{1}{Q_{d,\frac{1}{3}}} \sum_{n\in\mathbb{N}}|\widetilde L_n| ^{\alpha-\varepsilon} \geq \frac{\kappa (\mu,\varepsilon^{\prime},\varepsilon)}{Q_{d,\frac{1}{3}}} r ^{(1+4\varepsilon^{\prime})(\alpha -\varepsilon)}.$$
Remark also that any ball of radius smaller that $c_{\underline{i}}$ can be covered by at most $\left(\frac{2c_{\underline{i}}^{-4\varepsilon'}}{\gamma(S,\varepsilon')\rho_{\varepsilon}}\right)^d$ balls of radius $\gamma(S,\varepsilon')\rho_{\varepsilon}c_{\underline{i}}^{1+4\varepsilon^{\prime}}$. Moreover, by \eqref{math3},

 $$ c_{\underline{i}}^{-4\varepsilon^{\prime}}\leq \left(\widehat{C}_{\varepsilon'}^{-1-\frac{1+\varepsilon'}{1-\varepsilon'}}\vert K\vert^{\frac{-2\varepsilon'}{1-\varepsilon'}}c_0^{1+\varepsilon'}\right)^{-4\varepsilon'}r^{-4\varepsilon' \cdot(1+4\varepsilon')}.$$  

Setting
$$\widehat{\kappa}(\mu,\varepsilon,\varepsilon^{\prime},d)=\left(\frac{2\left(\widehat{C}_{\varepsilon'}^{-1-\frac{1+\varepsilon'}{1-\varepsilon'}}\vert K\vert^{\frac{-2\varepsilon'}{1-\varepsilon'}}c_0^{1+\varepsilon'}\right)^{-4\varepsilon'}}{\gamma(S,\varepsilon')\rho_{\varepsilon}}\right)^d ,$$
 
any ball of radius less than $c_{\underline{i}}$ can be covered by less than $\widehat{\kappa}(\mu,\varepsilon,\varepsilon^{\prime},d)r^{-4d\varepsilon' \cdot(1+4\varepsilon')}$ balls of radius less than $\gamma(S,\varepsilon')\rho_{\varepsilon}c_{\underline{i}}^{1+4\varepsilon^{\prime}}$.

This proves that, for any sequence of balls $\widehat{L}_n$ with $\vert\widehat{L}_n \vert\leq c_{\underline{i}}$ covering $A\cap E_{\underline{i}}$, recalling \eqref{eq78}, it holds that 

\begin{align}
\label{loceq}
\sum_{n\in\mathbb{N}}|\widehat{L}_n| ^{\alpha-\varepsilon} &\geq Q_{d,\frac{1}{3}}^{-1}\widehat{\kappa}(\mu,\varepsilon,\varepsilon^{\prime},d)^{-1}r^{4d\varepsilon' \cdot(1+4\varepsilon')}\kappa(\mu,\varepsilon^{\prime},\varepsilon)r^{(1+16\varepsilon')(\alpha-\varepsilon)}\\
&\geq  Q_{d,\frac{1}{3}}^{-1}\widehat{\kappa}(\mu,\varepsilon,\varepsilon^{\prime},d)^{-1}\kappa(\mu,\varepsilon^{\prime},\varepsilon)r^{(1+16\varepsilon')(\alpha-\varepsilon)+4d\varepsilon' \cdot(1+4\varepsilon')}.
\end{align} 
Recalling that $\vert E_{\underline{i}}\vert \leq c_{\underline{i}}$  and Definition \ref{hcont} , since \eqref{loceq} is valid for any covering $(\widehat{L}_n )_{n\in\mathbb{N}}$ of $A\cap E_{\underline{i}}$ with $\vert L_n \vert \leq c_{\underline{i}}$, one has
\begin{align}
\vert B\vert^{\alpha-\varepsilon}\geq \mathcal{H}^{\alpha-\varepsilon}_{\infty}(A)\geq \mathcal{H}^{\alpha-\varepsilon}_{\infty}(A \cap E_{\underline{i}})\geq  Q_{d,\frac{1}{3}}^{-1}\widehat{\kappa}(\mu,\varepsilon,\varepsilon^{\prime},d)^{-1}\kappa(\mu,\varepsilon^{\prime},\varepsilon)r^{(1+16\varepsilon')(\alpha-\varepsilon)+4d\varepsilon' \cdot(1+4\varepsilon')}.
\end{align}  

Taking the inf over all the set $A\subset B$ satisfying $\mu(A)=\mu(B)$, one obtains
$$\vert B\vert^{\alpha-\varepsilon}\geq \mathcal{H}^{\mu,s}_{\infty}(B)\geq  Q_{d,\frac{1}{3}}^{-1}\widehat{\kappa}(\mu,\varepsilon,\varepsilon^{\prime},d)^{-1}\kappa(\mu,\varepsilon^{\prime},\varepsilon)r^{(1+16\varepsilon')(\alpha-\varepsilon)+4d\varepsilon' \cdot(1+4\varepsilon')}. $$
The results stands for balls of diameter less than $c_0 .$ 

Set 
$$\varepsilon'_0=16\varepsilon'(\alpha-\varepsilon)+4d\varepsilon' \cdot(1+4\varepsilon')$$ and write $$\chi(d, \mu, \varepsilon,\varepsilon^{\prime}_0)= c_0 ^{\alpha-\varepsilon+\varepsilon^{\prime}_0} Q_{d,\frac{1}{3}}^{-1}\widehat{\kappa}(\mu,\varepsilon,\varepsilon^{\prime}_0,d)^{-1}\kappa(\mu,\varepsilon^{\prime}_0,\varepsilon).$$ For any ball of radius less than $1$ centered on $K$, one has 
$$\vert B\vert^{\alpha-\varepsilon}\geq \mathcal{H}^{\mu,\alpha-\varepsilon}_{\infty}(B)\geq \chi(d, \mu, \varepsilon,\varepsilon^{\prime}_0) r^{\alpha-\varepsilon+\varepsilon^{\prime}_0}. $$

\end{proof}

The estimates of Theorem \ref{contss} are now established in the case of general open sets.

Recall that by item $(5)$ of Proposition \ref{propmuc}, for any $s>\dim(\mu)$ and any set $E$, $ \mathcal{H}^{\mu,s}_{\infty}(E)=0.$ 
 \sk
 
Let us fix $s<\dim (\mu)$, $\varepsilon^{\prime}>0$ and set $\varepsilon'=\min\left\{\frac{\dim(\mu)-s}{2},\frac{1}{2} \right\}>0.$

 Since $K\cap \Omega \subset \Omega$ and $\mu(K\cap \Omega)=\mu(\Omega)$, it holds that
 $$\mathcal{H}^{\mu,s}_{\infty}(\Omega)\leq \mathcal{H}^{s}_{\infty}(\Omega \cap K).$$ 
 
 It remains to show that there exists a constant $c( d,\mu,s,\varepsilon^{\prime})$ such that for any open set $\Omega$, the converse inequality 
 
 $$c(d,\mu,s,\varepsilon^{\prime})\mathcal{H}^{s+\varepsilon^{\prime}}_{\infty}(\Omega \cap K)\leq \mathcal{H}^{\mu,s}_{\infty}(\Omega)$$
 holds.
 \sk
 
 Let  $E \subset \Omega$ be a Borel set such that $\mu(E)=\mu(\Omega)$ and  
 \begin{equation}
 \label{equat0}
 \mathcal{H}^{s}_{\infty}(E)\leq 2\mathcal{H}^{\mu,s}_{\infty}(\Omega).
 \end{equation}
  Let $\left\{L_n \right\}_{n\in\mathbb{N}}$ be a covering of $E$ by balls verifying
\begin{equation}
\label{equat1}
\mathcal{H}^{s}_{\infty}(L)\leq\sum_{n\geq 0}\vert L_n \vert^s \leq 2\mathcal{H}^{s}_{\infty}(E).
\end{equation}
 The covering $(L_n)_{n\in\mathbb{N}}$ will be modified into a covering $(\widetilde{L}_n)_{n\in\mathbb{N}}$ verifying the following properties:

\begin{itemize}
\item[•]$K\cap \Omega\subset \bigcup_{n\in\mathbb{N}}\widetilde{L}_n$,\mk
\item[•]$\bigcup_{n\in\mathbb{N}}L_n \subset \bigcup_{n\in\mathbb{N}}\widetilde{L}_n$\mk
\item[•] $$\sum_{n\geq 0}\vert \widetilde{L}_n \vert^{s+\varepsilon^{\prime}} \leq 8. 2^{s+\varepsilon^{\prime}}\frac{Q_{d,1}^2}{\chi( d,\mu,\varepsilon,\varepsilon^{\prime})}\sum_{n\geq 0}\vert L_n \vert ^s,$$
\end{itemize}

 where $Q_{d,1}$ and $\chi( d,\mu,\varepsilon,\varepsilon^{\prime})$  are the constants arising from Proposition \ref{besimodi} applied with $v=1$ and  Proposition \ref{autosim2}.
 
 Last item together with \eqref{equat0} and \eqref{equat1} then immediately imply that
  $$\frac{\chi( d,\mu,\varepsilon,\varepsilon^{\prime})}{8. 2^{s+\varepsilon^{\prime}} Q_{d,1}^2}\mathcal{H}^{s+\varepsilon^{\prime}}_{\infty}(K\cap \Omega)\leq \mathcal{H}^{\mu,s}_{\infty}(\Omega).$$

Setting $c(d,\mu,\varepsilon,\varepsilon^{\prime})=\frac{\chi( d,\mu,\varepsilon,\varepsilon^{\prime})}{8. 2^{s+\varepsilon^{\prime}} Q_{d,1}^2} $ concludes the proof.

Let us start the construction of the sequence $(\widetilde{L}_n )_{n \in\mathbb{N}}.$

Let $ \Delta=( K \setminus \bigcup_{n\in\mathbb{N}}B_n )\cap \Omega$. For every $x\in \Delta$, fix $0< r_x \leq 1$ such that $B(x,r_x)\subset \Omega.$  One of the following alternatives must occur:
\medskip
\begin{enumerate}
\item for any ball $L_n$ such that $L_n \cap B(x,r_x)\neq \emptyset$, $\vert L_n \vert \leq r_x$, or \mk 
\item there exists $n_x \in\mathbb{N}$ such that $L_{n_x}\cap B(x,r_x)\neq \emptyset$ and $\vert L_{n_x}\vert \geq r_x$.
\end{enumerate} 

Consider the set $S_1$ of points of $X$ for which the first alternative holds. 

By Lemma~\ref{besimodi} applied with $v=1$, it is possible to extract from the covering of $S_1$, $\left\{B(x,r_x),x\in S_1\right\}$, $Q_{d,1}$ families of pairwise disjoint balls, $\mathcal{F}_1 ,...,\mathcal{F}_{Q_{d,1}}$ such that
$$S_1 \subset \bigcup_{1\leq i\leq Q_{d,1}}\bigcup_{L\in\mathcal{F}_i}L.$$
 Now, any ball $L_n$ intersecting a ball $L\in \bigcup_{1\leq i\leq Q_{d,1}}\mathcal{F}_i$ must satisfy $\vert L_n \vert \leq L.$ In particular, since for any $1\leq i\leq Q_{d,1}$, the balls of $\mathcal{F}_i$ are pairwise disjoint, applying Lemma \ref{dimconst} to the ball of $\mathcal{F}_i$ intersecting $L$, we get that the ball $L_n$ intersects at most $Q_{d,1}$ balls of $\mathcal{F}_i$, hence at most $Q_{d,1}^2$ balls of  $\bigcup_{1\leq i\leq Q_{d,1}}\mathcal{F}_i .$ 
 
Let $L\in\bigcup_{1\leq i\leq Q_{d,1}}\mathcal{F}_i .$  One aims at replacing all the balls $L_n$ intersecting $L$ by the ball $2L$.

 For any $1\leq i \leq Q_{d,1}$ and any ball $L\in\mathcal{F}_i$, denote by $\mathcal{G}_L$ the set of balls $L_n$ intersecting $L$. Since $E\subset \bigcup_{n\in\mathbb{N}}L_n$ and $\mu(E)=\mu(\Omega)$, one has $E\cap L\subset \bigcup_{B\in \mathcal{G}_L}B$ and $\mu(E\cap L)=\mu(L)$.  By Definition \ref{mucont} and Proposition \ref{autosim2},  this implies that
 \begin{equation}
 \label{equat2}
\chi( d,\mu,\varepsilon,\varepsilon^{\prime})\vert L\vert^{s+\varepsilon^ {\prime}} \leq\mathcal{H}^{\mu,s}_{\infty}(L)\leq \sum_{B\in\mathcal{G}_L}\mathcal{H}^{\mu,s}_{\infty}(B)\leq \sum_{B\in\mathcal{G}_L}\vert B\vert^s.
 \end{equation}
Replace the balls of $\mathcal{G}_L$ by the ball $\widehat{L}=2L$ (recall that $\bigcup_{B\in\mathcal{G}_{L}}B \subset 2L$). The new sequence of balls so obtained by the previous construction applied to all the balls $L\in\bigcup_{\leq i\leq Q_{d,1}}\mathcal{F}_i$ is denoted by $(\widehat{L}_k)_{1\le k \le K}$, where $0\le K\le+\infty$. 

It follows from the construction and \eqref{equat2} that $S_1 \subset \bigcup_{1\le k\le K}\widehat{L}_k$ and 
\begin{equation}
\label{equat3}
\sum_{1\le k\le K}\Big (\frac{\vert \widehat{L}_k \vert}{2}\Big)^{s+\varepsilon^{\prime}} \leq \frac{Q_{d,1}^2}{\chi(d,\mu,\varepsilon,\varepsilon^{\prime})}\sum_{n\geq 0}\vert L_n \vert^s.
\end{equation}

On the other hand, since for any $x\in S_2 = \Delta\setminus S_1$, there exists $n_x\in\mathbb{N}$ such that ${L}_{n_x}\cap B(x,r_x)\neq \emptyset$ and $r_x \leq \vert {L}_{n_x}\vert$, one has $S_2\subset \bigcup_{n\in\mathbb{N}}2{L}_n$, so that 
$$
 \Big (\bigcup_{n\in\mathbb N}L_n\Big )\cup  \Big (K\cap \Omega \setminus \bigcup_{n\in\mathbb N}L_n\Big ) \subset \Big (\bigcup_{1\le k\le K}\widehat L_k\Big )\cup \Big ( \bigcup_{n\in\mathbb N}2{L}_n\Big ) .
 $$  
Putting the elements of $(\widehat{L}_k)_{1\le k \le K}$ and $(2L_n)_{n\ge 0}$ in a single sequence $(\widehat{ L}_n)_{n\ge 0}$, writing $(\widetilde{L}_n :=2\widehat{L}_n)_{n\in\mathbb{N}}$, by construction, $K\cap \Omega \subset \bigcup_{n\in\mathbb N}\widetilde{L}_n$ and due to \eqref{equat3}:

\begin{align*}
\mathcal{H}^ {s+\varepsilon^{\prime}}_{\infty}(K\cap \Omega)&\leq\sum_{n\geq 0}\vert \widetilde{L}_n \vert^{s+\varepsilon^{\prime}} \leq 2^{s+\varepsilon^{\prime}}\Big (\frac{Q_{d,1}^2}{\chi(d,\mu,\varepsilon,\varepsilon^{\prime})}+1\Big )\sum_{n\geq 0}\vert L_n \vert^s \\
&\leq 8.2^{s+\varepsilon^{\prime}} \frac{Q_{d,1}^2}{\chi( d,\mu,\varepsilon,\varepsilon^{\prime})}\mathcal{H}^{\mu,s}_{\infty}(\Omega) .
\end{align*}
The proof is concluded now by setting 
$$c( d,\mu,s,\varepsilon^{\prime})=\frac{\chi( d,\mu,\dim(\mu)-s,\varepsilon^{\prime})}{Q_{d,1}^2 4 .2^{s+\varepsilon^{\prime}}}.$$
 
 \end{proof}
 \begin{remarque}

Note that the proof for the case if open sets only relies on the fact that, there exists $\chi( d,\mu,\varepsilon,\varepsilon^{\prime})$ such that for any $x \in K$, for any $\rho>0$, there exists $0<r_x \leq \rho$ so that, writing $B=B(x,r_x),$
\begin{equation}
\label{math4}
\chi( d,\mu,\varepsilon,\varepsilon^{\prime})\vert B\vert ^{\dim(\mu)-\varepsilon+\varepsilon^{\prime}}\leq\mathcal{H}^{\mu, \dim(\mu)-\varepsilon }_{\infty}(\widering{B})\leq  \mathcal{H}^{ \mu, \dim(\mu)-\varepsilon}_{\infty}(B)\leq\vert B\vert ^{\dim(\mu)-\varepsilon}.
\end{equation}
In particular, any measure which satisfies the inequalities given in Proposition \ref{autosim2} satisfies the estimates given by Theorem \ref{contss} for any open set. Moreover, the proof of Proposition \ref{autosim2} only relies on the absolute continuity, for any $\underline{i}\in\Lambda^*$, of $\mu(f_{\underline{i}}^{-1})$ with respect to the weakly conformal measure $\mu.$ In particular Theorem  \ref{contss} actually holds for any measure $\mu \in\mathcal{M}(\mathbb{R}^d)$ for which, $\supp(\mu)\subset K$ and for any $\underline{i}\in\Lambda^*,$ $\mu(f_{\underline{i}}^{-1})$ is absolutely continuous with respect to $\mu$ (so that it holds for quasi-Bernoulli measures for instance).

 \end{remarque}
\subsection{Ubiquity results in the weakly conformal case}
Combining Theorem~\ref{lowani}  with Theorem~\ref{contss} and Lemma~\ref{equiac} yield the following result.

\begin{theoreme} 
\label{prop-ss}
 Let $S$ be a $C^{1}$ weakly conformal  IFS of a compact $X$ with attractor $K$  and $\mu$ be a self-conformal measure  associated with $S$. 

Let $(B_{n})_{n\in\mathbb{N}}$ be a sequence of closed balls centered on $K$ with $\lim_{n\to+\infty} |B_{n}| = 0$.

\begin{enumerate}
\item Suppose that $(B_n)_{n\in\N}$ is $\mu $-a.c. Then there exists a gauge function~$\zeta$ such that $\lim_{r\to 0^+}\frac{\log(\zeta(r))}{\log(r)}\ge \frac{\dim(\mu)}{\delta}$ and $\mathcal H^\zeta(\limsup_{n\to\infty}  \widering B_n ^{\delta}) >0$.  In particular 
\begin{equation}\label{weakversion}
\dim_{H}(\limsup_{n\rightarrow+\infty}\widering B_n ^{\delta})\geq \frac{\dim(\mu)}{\delta} .
\end{equation}

\item Suppose that $\mu(\limsup_{n\rightarrow+\infty}B_n)=1$. Then,  \eqref{weakversion} still holds but the existence of the gauge function is not ensured. Furthermore if $\mu$ is doubling,  then $(B_n)_{n\in\N}$ is $\mu$-a.c, so that the conclusion of  item (1) holds.
\end{enumerate}
\end{theoreme}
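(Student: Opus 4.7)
The strategy is to combine the $\mu$-essential content estimate of Theorem \ref{contss} with the general ubiquity Theorem \ref{lowani}, using Lemma \ref{equiac} as the bridge between the $\mu$-a.c. and $\mu$-full-limsup formulations.

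For Item (1), fix $\eta > 0$ small and set $s = \dim(\mu)/\delta - \eta$. Choose auxiliary parameters $\varepsilon,\varepsilon' > 0$ so that $\delta(s+\varepsilon) + \varepsilon' < \dim(\mu)$. Theorem \ref{contss} applied to $\widering{B}_n^\delta$ (a ball centered on $K$) yields
$$\mathcal{H}^{\mu,s}_\infty(\widering{B}_n^\delta) \;\geq\; c(d,\mu,s,\varepsilon)\,|B_n|^{\delta(s+\varepsilon)}.$$
Since $\mu$ is exact-dimensional (Feng--Hu, \cite{FH}), the set $E_\rho = \{x : \mu(B(x,r)) \leq r^{\dim(\mu)-\varepsilon'}\ \forall r \leq \rho\}$ has $\mu(E_\rho) \to 1$ as $\rho \to 0$. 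I would restrict the sequence to those $B_n$ whose centers lie in $E_\rho$ and whose diameter is below $\rho$; these ``regular'' balls satisfy $\mu(B_n) \leq C|B_n|^{\dim(\mu)-\varepsilon'} \leq c|B_n|^{\delta(s+\varepsilon)}$ for $|B_n|$ small enough, verifying the hypothesis $\mathcal{H}^{\mu,s}_\infty(\widering{B}_n^\delta) \geq \mu(B_n)$ of Theorem \ref{lowani}. Provided the restricted sub-sequence is still $\mu$-a.c., Theorem \ref{lowani} gives $\dim_H(\limsup \widering{B}_n^\delta) \geq s$, and letting $\eta \to 0$ yields \eqref{weakversion}.

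The gauge-function refinement of Item (1) comes from iterating this with $s_k \nearrow \dim(\mu)/\delta$: the Cantor-set construction of \cite{ED3} underlying Theorem \ref{lowani} produces, at each scale $s_k$, a Frostman-type measure on a subset of $\limsup \widering{B}_n^\delta$, and these can be assembled diagonally into a single gauge $\zeta$ satisfying $\lim_{r\to 0^+}\log \zeta(r)/\log r \geq \dim(\mu)/\delta$ and $\mathcal{H}^\zeta(\limsup \widering{B}_n^\delta) > 0$.

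For Item (2), when $\mu$ is doubling, Lemma \ref{equiac}(3) converts $\mu(\limsup B_n) = 1$ into the $\mu$-a.c. property and Item (1) applies verbatim. For general $\mu$, the $\mu$-a.c. property may fail, but the weaker conclusion \eqref{weakversion} still follows via a variant of the ubiquity argument that uses only $\mu(\limsup B_n) = 1$ (with the content estimate of Theorem \ref{contss} substituting for the missing uniformity in the $\mu$-a.c. packing); the gauge-function refinement is genuinely lost. The main technical hurdle is verifying, in Item (1), that the restricted sub-sequence remains $\mu$-a.c. with a constant independent of $\rho$: one must show that bad balls contribute little to any $\mu$-a.c. packing extracted from the original sequence inside an open $\Omega$, which reduces to a perturbative argument combining exact-dimensionality (to control $\mu(\Omega \setminus E_\rho)$) with the disjointness of the packing and, if necessary, a recursive replacement of bad balls by smaller regular sub-balls afforded by the $\mu$-a.c. of $(B_n)$ itself.
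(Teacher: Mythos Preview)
Your approach to Item~(1) is morally the same as the paper's, but you are working harder than necessary. Rather than restricting to balls with centers in an exact-dimensionality set $E_\rho$ and then wrestling with whether the restricted sub-sequence stays $\mu$-a.c.\ (the ``technical hurdle'' you flag), the paper invokes a ready-made lemma from \cite{ED2} (stated here as Lemma~\ref{gscainf}): any $\mu$-a.c.\ sequence admits a $\mu$-a.c.\ sub-sequence with $\mu(B_{\phi(n)}) \le r_{\phi(n)}^{\underline\dim_H(\mu)-\varepsilon}$. This single black box replaces your entire perturbative argument. After that the proof is exactly as you describe: Theorem~\ref{contss} gives $\mathcal H^{\mu,(\dim(\mu)-\varepsilon)/\delta}_\infty(\widering{B}_n^{\,\delta}) \ge c\,|B_n|^{\dim(\mu)-\varepsilon/2}$, and for $|B_n|$ small this dominates $|B_n|^{\dim(\mu)-\varepsilon/4} \ge \mu(B_n)$, so Theorem~\ref{lowani} applies. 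Your route would presumably work but is more delicate; the paper's route avoids the hurdle entirely.

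Your treatment of Item~(2) in the non-doubling case is too vague and misses a simple trick. The paper does \emph{not} appeal to a ``variant of the ubiquity argument that uses only $\mu(\limsup B_n)=1$''. Instead it observes, via Lemma~\ref{equiac}(2), that $\mu(\limsup B_n)=1$ forces $(2B_n)_n$ to be $\mu$-a.c. One then applies Item~(1) to $(2B_n)_n$ with exponent $\delta+\varepsilon$, and uses the elementary inclusion
\[
\limsup_{n\to\infty}(2B_n)^{\delta+\varepsilon}\subset \limsup_{n\to\infty} B_n^{\,\delta}
\]
(valid for $n$ large) to conclude $\dim_H(\limsup B_n^{\,\delta}) \ge \dim(\mu)/(\delta+\varepsilon)$, then lets $\varepsilon\to 0$. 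This is a two-line reduction to Item~(1), and it explains precisely why the gauge-function refinement is lost: the passage through $\delta+\varepsilon$ prevents one from hitting $\dim(\mu)/\delta$ at any fixed scale.
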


\begin{remarque}
One emphasizes that, for the purpose of this article, the results are stated for balls but Theorem \ref{contss} and Theorem \ref{lowani} allows to deal with more general open sets. For instance, given $1\leq \tau_1 \leq ... \leq \tau_d$, if $U_n$ is an open rectangle of length-sides $\prod_{i=1}^n \vert B_n \vert^{\tau_i},$ one needs to estimates the (classical) Hausdorff content of the union of the cubes $C\subset U_n$ of length-side $\vert B_n \vert^{\tau_d}$ (the smallest side of $U_n$) for which $C\cap K \neq \emptyset.$ This is not to hard to achieve as soon as the rectangle has sides in ``natural directions'' for the IFS we consider.
\end{remarque}

\begin{proof}

Let $\mu$ be a self-conformal measure of support $K$. 

One proves the first item of Theorem \ref{prop-ss}. 

Let $(B_n)_{n\in\mathbb{N}}$ be a $\mu$-a.c sequence of balls centered on $K$ satisfying $\vert B_n \vert \to 0$. Let us fix $\varepsilon>0$.
 
Let us start with a lemma whose proof can be found in \cite{ED2}.

\begin{lemme} 
\label{gscainf}
Let   $\mu  \in\mathcal{M}(\R^d)$.
Let    $\mathcal{B} =(B_n :=B(x_n ,r_n))_{n\in\mathbb{N}}$ be a $\mu $-a.c sequence of balls of $ \R^d$
 Then  for every $\varepsilon>0$, there exists  a  $\mu $-a.c sub-sequence $(B_{\phi(n)})_{n\in\mathbb{N}}$ of  $\mathcal{B} $ such that  for every $n\in\mathbb{N}$, $\mu (B_{\phi(n)})\leq (r_{\phi(n)})^{\underline{\dim}_H (\mu )-\varepsilon}.$
\end{lemme}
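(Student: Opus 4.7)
My plan is to take the subsequence of indices whose balls already satisfy the desired bound and to verify directly that it inherits the $\mu$-asymptotically covering property from $(B_n)$ (Definition~\ref{ac}). Set $\alpha = \underline{\dim}_H(\mu)$. By Definition~\ref{dim}, $\underline{\dim}_{\mathrm{loc}}(\mu, x) \geq \alpha$ for $\mu$-almost every $x$, so for such $x$ there exists $\rho(x) > 0$ with $\mu(B(x, r)) \leq r^{\alpha - \varepsilon/2}$ for every $r \leq \rho(x)$. Introduce the nested Borel sets $E_\rho = \{x : \forall r \leq \rho, \ \mu(B(x, r)) \leq r^{\alpha - \varepsilon/2}\}$, which satisfy $\mu(E_\rho) \uparrow 1$ as $\rho \to 0^+$. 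Set $J = \{n : \mu(B_n) \leq r_n^{\alpha - \varepsilon}\}$ and let $\phi$ be the increasing enumeration of $J$; the infiniteness of $J$ will drop out of the argument below.

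The key geometric step is a localization statement: for every $\rho > 0$ there exists $r^*(\rho) > 0$ such that whenever $B_n$ meets $E_\rho$ and $r_n \leq r^*(\rho)$, one has $n \in J$. Indeed, picking $x \in B_n \cap E_\rho$, the triangle inequality yields $B_n \subset B(x, 2r_n)$ with $2r_n \leq \rho$, so the $E_\rho$-condition forces $\mu(B_n) \leq (2r_n)^{\alpha - \varepsilon/2}$, and taking $r^*(\rho) = \min\{\rho/2,\, 2^{1-2\alpha/\varepsilon}\}$ ensures $(2r_n)^{\alpha-\varepsilon/2} \leq r_n^{\alpha-\varepsilon}$. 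The contrapositive is what I will invoke: any bad ball $B_n$ with $n \notin J$ and $r_n \leq r^*(\rho)$ is disjoint from $E_\rho$, hence lies in $E_\rho^c$.

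To verify Definition~\ref{ac} for $(B_{\phi(n)})_{n \in \mathbb{N}}$, let $C > 0$ denote the $\mu$-a.c constant of $(B_n)$. Given open $\Omega$ with $\mu(\Omega) > 0$ and $g \in \mathbb{N}$, put $\eta = C\mu(\Omega)/2$, choose $\rho_1$ with $\mu(E_{\rho_1}^c) \leq \eta$, and pick $g''$ large enough that $g'' \geq \phi(g)$ and $r_n \leq r^*(\rho_1)$ for every $n \geq g''$ (possible since $r_n \to 0$). Applying the $\mu$-a.c property of $(B_n)$ to $\Omega$ with parameter $g''$ produces disjoint balls $B_{n_1}, \dots, B_{n_{N_\Omega}} \subset \Omega$, all with $n_i \geq g''$, satisfying $\mu(\bigcup_i B_{n_i}) \geq C \mu(\Omega)$. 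By the geometric step, every bad ball in this family lies in $E_{\rho_1}^c$, so by disjointness $\mu\bigl(\bigcup_{n_i \notin J} B_{n_i}\bigr) \leq \mu(E_{\rho_1}^c) \leq \eta$; this leaves $\mu\bigl(\bigcup_{n_i \in J} B_{n_i}\bigr) \geq (C/2)\mu(\Omega)$. Since $n_i \geq \phi(g)$, these good indices correspond to positions $k_i \geq g$ in the enumeration of $J$, yielding the $\mu$-a.c property of the subsequence with uniform constant $C/2$.

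The main subtlety I expect to watch is that $\rho_1$, and hence $g''$, depends on $\Omega$ through $\mu(\Omega)$; this is harmless because Definition~\ref{ac} allows $N_\Omega$ and the indices $n_i$ to depend on $\Omega$, and only the proportionality constant needs to remain uniform. A secondary point is the trivial case $\alpha \leq \varepsilon/2$, where $r_n^{\alpha-\varepsilon}$ blows up as $r_n\to 0$ and every sufficiently late index is automatically in $J$, reducing the claim to a tail of the original sequence.
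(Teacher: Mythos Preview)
The paper does not give its own proof of Lemma~\ref{gscainf}; it simply cites \cite{ED2}. Your proposal supplies a correct, self-contained argument, and the strategy---restricting to the index set $J=\{n:\mu(B_n)\le r_n^{\alpha-\varepsilon}\}$, controlling bad balls via the exhaustion $E_\rho=\{x:\forall r\le\rho,\ \mu(B(x,r))\le r^{\alpha-\varepsilon/2}\}$, and splitting the a.c.\ mass into good and bad parts---is the natural one and is almost certainly what the cited reference does.

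One cosmetic point: you write ``pick $g''\ge\phi(g)$'' before having shown that $J$ is infinite, which is formally circular. The repair is implicit in your own argument: run the mass-splitting step once with any $g''$ large enough that $r_n\le r^*(\rho_1)$ for $n\ge g''$; this already yields an index of $J$ beyond $g''$, and since $g''$ is unbounded, $J$ is infinite, after which the constraint $g''\ge\phi(g)$ is legitimate. Apart from this ordering issue, the computation of $r^*(\rho)$, the inclusion $B_n\subset B(x,2r_n)$ for $x\in B_n\cap E_\rho$, the disjointness bound $\mu(\bigcup_{n_i\notin J}B_{n_i})\le\mu(E_{\rho_1}^c)$, the handling of the trivial regime $\alpha\le\varepsilon$, and the observation that the resulting constant $C/2$ is uniform in $\Omega$ are all correct.
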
 
 
By Lemma \ref{gscainf}, up to an extraction, one can assume that $\mu(B_n)\leq \vert B_n\vert^{\dim (\mu)-\frac{\varepsilon}{4}}.$ 

The following proposition is proved in \cite{ED3}.
\begin{proposition}
\label{propmuc}
 {Let $\mu \in\mathcal{M}(\R^d)$, $s\geq 0$ and $A\subset \R^d $ be a Borel set. The $s$-dimensional $\mathcal{H}^{\mu,s}_{\infty}(\cdot)$ outer measure satisfies the following properties:}
\begin{enumerate}
\item  { If $\vert A\vert\leq 1$, the mapping $s\geq 0\mapsto \mathcal{H}^{\mu,s}_{\infty}(A)$ is decreasing from $\mathcal{H}^{\mu,0}_{\infty}(A)=1$ to $\lim_{t\to +\infty}\mathcal{H}^{\mu,t}_{\infty}(A)=0$.\mk
\item    $0\leq \mathcal{H}^{\mu,s}_{\infty}(A)\leq \min\left\{\vert A\vert ^s, \mathcal{H}^{s}_{\infty}(A)\right\}$.

\mk
\item For every subset $ B\subset A$ with $\mu(A)=\mu(B)$,   $\mathcal{H}^{\mu,s}_{\infty}(A)=\mathcal{H}^{\mu,s}_{\infty}(B).$}

\mk
\item For every  $\delta \geq 1$,  $\mathcal{H}^{\mu,\frac{s}{\delta}}_{\infty}(A)\geq (\mathcal{H}^{\mu,s}_{\infty}(A))^{\frac{1}{\delta}}.$

\mk
\item  For every  $  s>\overline{\dim}_H (\mu)$,   $\mathcal{H}^{\mu,s}_{\infty}(A)=0.$

%
%

\end{enumerate}
\end{proposition}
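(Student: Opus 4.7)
The plan is to handle each of the five items in turn, exploiting the definition $\mathcal{H}^{\mu,s}_{\infty}(A)=\inf\{\mathcal{H}^{s}_{\infty}(E):E\subset A,\ \mu(E)=\mu(A)\}$ and basic facts about the ordinary Hausdorff content. None of the five steps looks substantial; the only item requiring more than a formal manipulation is (5), and even there the work reduces to the definition of $\overline\dim_P(\mu)$ recalled in the introduction.

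For item (1), I would first note that $\mathcal{H}^{0}_{\infty}(E)=1$ for any nonempty bounded $E$ (a single ball of diameter $\le 1$ covers $E$ and contributes $|B|^{0}=1$), so taking $E=A$ gives $\mathcal{H}^{\mu,0}_{\infty}(A)\le 1$, while no smaller value is attainable since any covering contributes at least one term equal to $1$. For monotonicity, observe that whenever $|A|\le 1$ one can restrict attention in \eqref{hcont} to balls $B_n$ with $|B_n|\le |A|\le 1$, and then $|B_n|^{s}$ is non-increasing in $s$; the inequality passes to the infima defining $\mathcal{H}^{s}_{\infty}(E)$ and $\mathcal{H}^{\mu,s}_{\infty}(A)$. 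Finally, item (2) applied to $A$ (proved independently below) gives $\mathcal{H}^{\mu,t}_{\infty}(A)\le |A|^{t}\to 0$ as $t\to+\infty$ provided $|A|<1$; the boundary case $|A|=1$ is handled by covering with a ball of diameter $|A|/(1+\eta)<1$ inside a slightly enlarged set, or by a preliminary rescaling.

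Items (2) and (3) are purely set-theoretic. Item (2) follows from two observations: taking $E=A$ in the infimum yields $\mathcal{H}^{\mu,s}_{\infty}(A)\le \mathcal{H}^{s}_{\infty}(A)$, while the single-ball cover of $E$ by a ball of diameter $|A|$ gives $\mathcal{H}^{s}_{\infty}(E)\le |A|^{s}$ for every admissible $E$. For item (3), the two directions use the natural bijection between admissible sets. If $E\subset A$ satisfies $\mu(E)=\mu(A)$, then $E\cap B\subset B$ satisfies $\mu(E\cap B)=\mu(B)$ and $\mathcal{H}^{s}_{\infty}(E\cap B)\le \mathcal{H}^{s}_{\infty}(E)$, so $\mathcal{H}^{\mu,s}_{\infty}(B)\le \mathcal{H}^{\mu,s}_{\infty}(A)$; the reverse inequality is immediate since any admissible $E'\subset B$ is also admissible for $A$.

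For item (4), I would use the elementary power-sum inequality $(\sum_{n}a_{n})^{1/\delta}\le \sum_{n}a_{n}^{1/\delta}$, valid for $a_n\ge 0$ and $\delta\ge 1$ by subadditivity of $x\mapsto x^{1/\delta}$. Applied to $a_n=|B_n|^{s}$ for a covering of an admissible $E$, this gives
\begin{equation*}
\Bigl(\sum_{n}|B_n|^{s}\Bigr)^{1/\delta}\le \sum_{n}|B_n|^{s/\delta},
\end{equation*}
whence $\bigl(\mathcal{H}^{s}_{\infty}(E)\bigr)^{1/\delta}\le \mathcal{H}^{s/\delta}_{\infty}(E)$; since $x\mapsto x^{1/\delta}$ is monotone, passing to the infimum over admissible $E$ yields the desired inequality. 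For item (5), I would recall that $\overline\dim_{P}(\mu)=\inf\{\dim_{P}(F):\mu(F)=1\}$ (noting the mild inconsistency with the notation $\overline\dim_H(\mu)$ used in the statement), so for $s>\overline\dim_P(\mu)$ one may pick $F$ with $\mu(F)=1$ and $\dim_{H}(F)\le \dim_{P}(F)<s$. Setting $E=A\cap F$ produces an admissible competitor in the infimum with $\mathcal{H}^{s}(E)=0$, hence $\mathcal{H}^{s}_{\infty}(E)=0$, which forces $\mathcal{H}^{\mu,s}_{\infty}(A)=0$. The main pitfall, and the only step deserving attention, is checking that this last point really uses the upper packing dimension rather than the upper Hausdorff dimension, since $\dim_{H}(F)<s$ alone need not follow from the weaker ``$\mu$-ess sup of $\overline\dim_{\mathrm{loc}}$'' formulation without invoking the identity recalled just after Definition~\ref{dim}.
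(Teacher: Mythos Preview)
The paper does not prove this proposition at all: it simply states ``The following proposition is proved in \cite{ED3}'' and moves on. Your argument therefore supplies what the paper omits, and there is no ``paper's approach'' to compare against.

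Your proof is essentially correct. Items (2)--(4) are clean; for (3) you implicitly use $\mu(E\cap B)=\mu(E)+\mu(B)-\mu(E\cup B)\ge \mu(E)+\mu(B)-\mu(A)=\mu(B)$, which is the one line worth making explicit. For (4) the order of the two infima (over covers, then over admissible $E$) is handled correctly via monotonicity of $x\mapsto x^{1/\delta}$.

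Two small points. In (1), your claim that one may restrict to balls with $|B_n|\le |A|$ is not literally justified; the cleaner route is to split into the cases ``all $|B_n|\le 1$'' (where $|B_n|^{s_2}\le |B_n|^{s_1}$ termwise) and ``some $|B_{n_0}|>1$'' (where $\sum|B_n|^{s_1}\ge 1\ge |A|^{s_2}\ge \mathcal H^{s_2}_\infty(E)$). For the limit when $|A|=1$, rather than rescaling, just note $\mathcal H^{\mu,t}_\infty(A)\le \mathcal H^t_\infty(A)\le \mathcal H^t(A)=0$ once $t>d$. Also, $\mathcal H^{\mu,0}_\infty(A)=1$ tacitly assumes $\mu(A)>0$, since otherwise $E=\emptyset$ is admissible. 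In (5), your diagnosis of the notational inconsistency is accurate: the paper only defines $\overline{\dim}_P(\mu)$, and your argument via a full-measure set $F$ with $\dim_H(F)\le\dim_P(F)<s$ is the right one.
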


Also, by Theorem \ref{contss} and item $(5)$ of Proposition \ref{propmuc}, there exists a constant $c(d,\mu,\dim (\mu)-\frac{\varepsilon}{2},\frac{\varepsilon}{4})$  such that, for any $n\in\mathbb{N}$, for any $\delta>1$
$$\mathcal{H}^{\mu,\frac{\dim(\mu)-\varepsilon}{\delta}}(\widering{ B_n}  ^{\delta})\geq c(d,\mu,\dim (\mu)-\varepsilon,\frac{\varepsilon}{2})\vert B_n \vert^{\dim(\mu)-\frac{\varepsilon}{2}}.$$
Taking $n$ large enough so that $\vert B_n \vert^{-\frac{\varepsilon}{4}}\geq c(d,\mu,\dim (\mu)-\frac{\varepsilon}{2},\frac{\varepsilon}{4})$, one gets

\begin{equation}
\label{jcpa}
\mathcal{H}^{\mu,\frac{\dim(\mu)-\varepsilon}{\delta}}(\widering{ B_n} ^{\delta})\geq \vert B_n \vert^{\dim(\mu)-\frac{\varepsilon}{4}}\geq \mu(B_n).
\end{equation}
Defining $\mathcal{U}_{\delta}=(\widering{ B_n}^{\delta})_{n\in\mathbb{N}}$, using \eqref{jcpa} and Theorem \ref{lowani} with $s_{ \varepsilon}= \frac{\dim (\mu)-\varepsilon}{\delta}$ and letting $\varepsilon\to 0$ finishes the proof of the first item.

Assume now that the sequence $(B_n)_{n\in\mathbb{N}}$ satisfies only $\mu(\limsup_{n\rightarrow+\infty}B_n).$ Then, by Proposition \ref{equiac} $(2B_n)_{n\in\mathbb{N}}$ is $\mu$-a.c.

Since, for any $\varepsilon>0$,
$$\limsup_{n\rightarrow+\infty}(2B_n )^{\delta+\varepsilon}\subset \limsup_{n\rightarrow+\infty}B_n^{\delta},$$
applying the first item of Theorem \ref{prop-ss} to $(2B_n )_{n\in\mathbb{N}}$, one gets 
$$\dim_H (\limsup_{n\rightarrow+\infty}B_n ^{\delta})\geq \frac{\dim (\mu)}{\delta +\varepsilon}.$$
Since $\varepsilon$ was arbitrary, the second item is proved.
\end{proof}

\begin{remarque}
The proof of Theorem \ref{prop-ss} actually shows more. With the notation of \cite[Definition 2.5]{ED3}, it is proved that $s(\mu,\mathcal{B},\mathcal{U}_{\delta})\geq \frac{\dim_H (\mu)}{\delta}$ so that \cite[Theorem 2.11]{ED3} holds for self-conformal measures instead of self-similar measure. 
\end{remarque}
\section{Proof of Theorem \ref{SHRTARG}}
\label{secprshrtag}

\sk

Write $s=\dim_H (K).$ 

The notations of the proof of Theorem \ref{contss} are adopted in this section.

\sk

Let us start by treating the easier cases $\delta<1$ and $x_0\notin K.$ 
 
 \begin{lemme}
\label{badapproxnotinK}
For any $x_0\in U$ and any $\delta<1$, $\limsup_{\underline{i}\in\Lambda^*}B(f_{\underline{i}}(x_0),\vert f_{\underline{i}}(K)\vert^{\delta})=K.$

For any $\delta>1,$ and any $x_0\notin K,$ one has 
$$\limsup_{\underline{i}\in\Lambda^*}B(f_{\underline{i}}(x_0),\vert f_{\underline{i}}(K)\vert^{\delta})=\emptyset.$$
\end{lemme}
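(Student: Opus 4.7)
The two statements cover opposite regimes: the first follows from uniform contraction of the maps, the second from the positive gap $d(x_0,K)>0$. I fix a compact $X\subset U$ containing $K\cup\{x_0\}$, so that by Remark~\ref{remX} the inequalities of Lemma~\ref{lemmewef}, and hence their consequences \eqref{equreg} and \eqref{equaxibar}, all extend to pairs $(x,y)\in X^2$.

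For $\delta<1$, $x_0\in U$, the inclusion $\limsup\subseteq K$ is immediate: any $y$ in the $\limsup$ is a limit of centers $f_{\underline{i}_k}(x_0)$, and since $\mathrm{diam}(f_{\underline{i}_k}(X))\to 0$ while $f_{\underline{i}_k}(K)\subset K$, these centers accumulate on $K$. For the reverse inclusion I pick, given $y\in K$, a coding $z=(z_j)_{j\geq 1}\in\Lambda^{\mathbb{N}}$ with $\pi(z)=y$ and set $\underline{i}_k=(z_1,\ldots,z_k)$, so $y=f_{\underline{i}_k}(w_k)$ with $w_k=\pi(\sigma^k z)\in K$. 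Then \eqref{equreg} followed by \eqref{equaxibar} gives, for arbitrarily small $\theta>0$,
$$
\|y-f_{\underline{i}_k}(x_0)\|\leq \widetilde{C}_\theta\,c_{\underline{i}_k}(x_0)^{1-\theta}\,|X|\leq C_\theta\,|f_{\underline{i}_k}(K)|^{(1-\theta)/(1+\theta)}.
$$
Since $(1-\theta)/(1+\theta)\to 1$ as $\theta\to 0$ and $|f_{\underline{i}_k}(K)|\to 0$ by \eqref{encafibar}, choosing $\theta$ so that $(1-\theta)/(1+\theta)>\delta$ yields $\|y-f_{\underline{i}_k}(x_0)\|\leq |f_{\underline{i}_k}(K)|^\delta$ for all large $k$, hence $y\in\limsup$.

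For $\delta>1$, $x_0\notin K$, I set $d_0=d(x_0,K)>0$. The lower halves of \eqref{equreg} and \eqref{equaxibar}, with small $\theta>0$, produce uniformly in $\underline{i}_k\in\Lambda^k$ and $z\in K$,
$$
\|f_{\underline{i}_k}(x_0)-f_{\underline{i}_k}(z)\|\geq \widetilde{C}_\theta^{-1}\,c_{\underline{i}_k}(x_0)^{1+\theta}\,d_0\geq \kappa\,|f_{\underline{i}_k}(K)|^{(1+\theta)/(1-\theta)}.
$$
Choosing $\theta$ so that $(1+\theta)/(1-\theta)<\delta$ and using $|f_{\underline{i}_k}(K)|\leq C_\beta\beta^k\to 0$, the right-hand side exceeds $|f_{\underline{i}_k}(K)|^\delta$ for all large $k$, so every ball $B(f_{\underline{i}_k}(x_0),|f_{\underline{i}_k}(K)|^\delta)$ is disjoint from the image piece $f_{\underline{i}_k}(K)$. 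The main obstacle I anticipate is the final upgrade from ``disjoint from $f_{\underline{i}_k}(K)$'' to ``disjoint from $K$'': with overlaps, a fixed $y\in K$ could lie in a sibling piece $f_{\underline{j}_k}(K)$ close to $f_{\underline{i}_k}(x_0)$, so concluding $\limsup=\emptyset$ will require a uniform separation argument at scale $|f_{\underline{i}_k}(K)|^\delta$ across all pieces at level $k$, which is where most of the technical work of the second statement will sit.
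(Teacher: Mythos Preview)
Your first part ($\delta<1$) is essentially the paper's argument: it also uses \eqref{equreg} and \eqref{equaxibar} (via Lemma~\ref{lemmewef} and Remark~\ref{remX}) to get $\|f_{\underline{i}}(x_0)-f_{\underline{i}}(y)\|\le \kappa\,|f_{\underline{i}}(K)|^{1-\varepsilon}$ for all $y\in K$, hence $f_{\underline{i}}(K)\subset B(f_{\underline{i}}(x_0),|f_{\underline{i}}(K)|^\delta)$ for large $|\underline{i}|$. Your choice of a single coding of $y$ is enough, and the inclusion $\limsup\subset K$ is justified exactly as you say.

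For the second part you have correctly located the gap, but the route you sketch to close it---a uniform separation between sibling pieces at scale $|f_{\underline{i}}(K)|^\delta$---is not available under overlaps and is not what the paper does. The paper avoids the ``sibling piece'' issue entirely by a compactness/coding argument. Assume $x\in\limsup$; pick words $\underline{j}_n\in\Lambda^*$ with $|\underline{j}_n|\to\infty$ witnessing $x\in B(f_{\underline{j}_n}(x_0),|f_{\underline{j}_n}(K)|^\delta)$. By compactness of $\Lambda^{\mathbb{N}}$ one extracts an infinite word $\underline{i}=(i_1,i_2,\dots)$ along which the approximation holds for infinitely many prefixes; in particular $x=\lim_k f_{i_1}\circ\cdots\circ f_{i_k}(x_0)=\pi(\underline{i})$, so $x=f_{(i_1,\dots,i_n)}(z_n)$ with $z_n=\pi(\sigma^n\underline{i})\in K$. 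Now your own lower bound applied to \emph{this specific} word gives
\[
d(f_{(i_1,\dots,i_n)}(x_0),x)=d(f_{(i_1,\dots,i_n)}(x_0),f_{(i_1,\dots,i_n)}(z_n))\ge \kappa\,|f_{(i_1,\dots,i_n)}(K)|^{\frac{1+\theta}{1-\theta}}>|f_{(i_1,\dots,i_n)}(K)|^\delta
\]
for large $n$, contradicting membership in the ball for those prefixes. The point is that you never need $B(f_{\underline{i}}(x_0),|f_{\underline{i}}(K)|^\delta)\cap K=\emptyset$; you only need the ball to miss the single point $x$, and $x$ is forced to lie in $f_{(i_1,\dots,i_n)}(K)$ precisely because the witnessing words share longer and longer prefixes with $\underline{i}$. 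Replace your proposed separation step with this extraction argument and the proof goes through.
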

\begin{proof}
We start by dealing with the case $\delta<1.$ Let $x_0\in U.$

Note first that  $\limsup_{\underline{i}\in\Lambda^*}B(f_{\underline{i}}(x_0),\vert f_{\underline{i}}(K)\vert^{\delta})\subset K.$ We now prove the converse inclusion.

Let $c>1$. By Lemma \ref{lemmewef} and Remark \ref{remX} applied with $X= \bigcup_{\underline{i}\in\Lambda ^*}f_{\underline{i}}(x_0)\cup K,$ there exists $D(c)$ such that for any $y\in K$ and any $\underline{i}=(i_1,...,i_n)\in\Lambda^*,$
\begin{equation}
\label{maths1}
\vert \vert f_{\underline{i}}(x_0)-f_{\underline{i}}(y)\vert\vert\leq D(c)c^n \vert\vert f'_{\underline{i}}(y)\vert\vert\cdot \vert\vert x-y\vert\vert.
\end{equation}
By Lemma \ref{lemmewef}, \eqref{bdist2}, one has
\begin{equation}
\label{maths2}
\vert\vert f_{\underline{i}}'(y)\vert\vert\leq D(c)c^n \vert f_{\underline{i}}(K)\vert.
\end{equation}
Combining \eqref{maths1} and \eqref{maths2}, one gets
\begin{equation}
\label{maths3}
\vert \vert f_{\underline{i}}(x_0)-f_{\underline{i}}(y)\vert\vert\leq  \max_{z\in K}d(x,z)D(c)^2 c^{2n}\vert f_{\underline{i}}(K)\vert.
\end{equation}
Recalling that there exists $0<t_1<t_2$ so that, uniformly on  $n$ and $\underline{i}\in \Lambda^n,$
$$t_1\leq \frac{\log \vert \vert f_{\underline{i}}\vert\vert}{n}\leq t_2,$$
taking $c=e^{\frac{t_1 \varepsilon}{4}}$ and writing $\kappa(S,\varepsilon,x_0)=\max_{z\in K}d(x,z)D(c)^2,$
\begin{equation}
\label{maths4}
\vert \vert f_{\underline{i}}(x_0)-f_{\underline{i}}(y)\vert\vert\leq \kappa(S,\varepsilon,x_0)\vert f_{\underline{i}}(K)\vert^{1-\frac{\varepsilon}{2}}.
\end{equation}
In particular, for $n$ large enough, for any $\underline{i}\in\Lambda^{n}$, $$f_{\underline{i}}(K)\subset B(f_{\underline{i}}(x_0),\vert f_{\underline{i}}(K)\vert^{1-\varepsilon}).$$
One concludes that $K\subset\limsup_{\underline{i}\in\Lambda^*}B(f_{\underline{i}}(x_0),\vert f_{\underline{i}}(K)\vert^{1-\varepsilon}).$ This ends the proof of the case $\delta<1.$

Let us treat the case $\delta>1$ and $x_0 \notin K.$

We proceed by contradiction. Assume that $\limsup_{\underline{i}\in\Lambda^*}B(f_{\underline{i}}(x_0),\vert f_{\underline{i}}(K)\vert^{\delta})\neq \emptyset.$ Consider $x\in\limsup_{\underline{i}\in\Lambda^*}B(f_{\underline{i}}(x_0),\vert f_{\underline{i}}(K)\vert ^{\delta}).$ By compacity of $\Lambda^ {\mathbb{N}}$, there exists   $\underline{i}=(i_1,...)\in \Lambda^{\mathbb{N}}$ such that for an infinity of integer $k\in\mathbb{N},$ $$x\in B(f_{i_1}\circ ...\circ f_{i_k}(x_0),\vert f_{\underline{i}}(K)\vert^{\delta}).$$ Note that this implies that $x =\lim_{k\rightarrow+\infty}f_{i_1}\circ ...\circ f_{i_k}(x_0),$ so that for any $n \in\mathbb{N},$ $$x=f_{i_1}\circ ...\circ f_{i_n}(\lim_{k\rightarrow+\infty}f_{i_{n+1}}\circ ...\circ f_{i_{n+k}}(x_0)).$$ Writing $z_n=\lim_{k\rightarrow+\infty}f_{i_{n+1}}\circ ...\circ f_{i_{n+k}}(x_0)\in K,$ one has $$x=f_{i_1}\circ ...\circ f_{i_n}(z_n). $$
 It follows that, for any $n\in\mathbb{N},$ 
\begin{equation}
\label{distxzn} 
 d(f_{i_1}\circ ...\circ f_{i_n}(x_0),x)=d(f_{i_1}\circ ...\circ f_{i_n}(x_0),f_{i_1}\circ ...\circ f_{i_n}(z_n)).
 \end{equation} 
Write $\underline{i}=(i_1,...,i_n)$ and let $ \varepsilon>0$ be small enough so that $1\leq \frac{1+\varepsilon}{1-\varepsilon}<\delta$. By \eqref{equreg} and \eqref{equaxibar} applied with $\theta=\varepsilon$, 
 \begin{align*}
 d(f_{\underline{i}}(x_0),f_{\underline{i}}(z_n))&\geq \widehat{C}_{\varepsilon}^{-1}\vert\vert f_{\underline{i}}'(x)\vert\vert^{1+\varepsilon}\vert\vert x-z_n\vert\vert \\
 &\geq d(x,K)\widehat{C}_{\varepsilon}^{-1}\left(\vert K\vert \widehat{C}_{\varepsilon}\right)^{\frac{-1-\varepsilon}{1-\varepsilon}}\vert f_{\underline{i}}(K)\vert^{\frac{1+\varepsilon}{1-\varepsilon}}
 \end{align*}
 which implies that, for $n$ large enough, 
 
\begin{equation}
d(f_{\underline{i}}(x_0),x)>\vert f_{\underline{i}}(K)\vert^{\delta}.
\end{equation}
This is a contradiction. One concludes that
$$\limsup_{\underline{i}\in\Lambda^*}B(f_{\underline{i}}(x_0),\vert f_{\underline{i}}(K)\vert^{\delta})=\emptyset.$$
\end{proof}

\begin{remark}
\label{preccasss}
In the case where $S=\left\{f_1,...,f_m\right\}$ is a self-similar system, more precise statement can be made. Denote $0<c_1,...,c_m <1$ the contracting ratio of $f_1,...,f_m.$ In the self-similar case, for any $z\in K$ and any $\underline{i}\in\Lambda^{*}$
\begin{align*}
d(f_{\underline{i}}(x_0),f_{\underline{i}}(z))=c_{\underline{i}}d(x,z)\leq c_{\underline{i}}\max_{y\in K}d(y,x).
\end{align*}
This implies that, writing $C(x_0,S)=\max_{y\in K}d(y,x)$, $f_{\underline{i}}(K)\subset B(f_{\underline{i}}(x_0),C(x_0,S)c_{\underline{i}})$ and
$$K=\limsup_{\underline{i}\in\Lambda^*}B(f_{\underline{i}}(x_0),C(x_0,S)c_{\underline{i}}). $$
\end{remark}

The following subsections are dedicated to the proof of the case $x_0\in K$ and $\delta\geq 1$ of Theorem \ref{SHRTARG}.

\subsection{Variational principle and $C^1$ weakly conformal IFS }
A modified version of a proposition of Feng and Hu, used in their proof their variational principal \cite[Theorem 2.13]{FH} is needed. The following subsection is dedicated to this  modification.

The following proposition will be slightly modified so that the measure involved is fully supported on $K$.
\begin{proposition}[\cite{FH}]
\label{varprinc}
For any $\varepsilon>0$, there exists $n_{\varepsilon}\in\mathbb{N}$ as well as words $\underline{i}_{1},...,\underline{i}_{n_{\varepsilon}}\in\Lambda^{*}$ such that:

\begin{itemize}
\item[•] for any $1\leq j<j^{\prime}\leq n_{\varepsilon}$, $f_{\underline{i}_j}(K)\cap f_{\underline{i}_{j^{\prime}}}(K)=\emptyset,$\sk
\item[•] writing $S_{\varepsilon}=\left\{f_{\underline{i}_1},...,f_{\underline{i}_{n_{\varepsilon}}}\right\}$, there exists a probability vector $P_{\varepsilon}=(p_1 ,...,p_{n_{\varepsilon}})$ such that the weakly conformal measure $\mu_{\varepsilon}$ associated with $P_{\varepsilon}$ and $S_{ \varepsilon}$ satisfies $\dim_H (\mu_{ \varepsilon})\geq \dim_H (K)-\varepsilon.$
\end{itemize}

\end{proposition}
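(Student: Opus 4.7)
My plan is to deduce this from the Feng--Hu variational principle together with a maximal--disjoint--family extraction. First, I would invoke the variational principle for $C^1$ weakly conformal IFS (Feng--Hu) to produce a Bernoulli measure $\nu$ on $\Lambda^{\mathbb{N}}$ with probability vector $(q_1,\ldots,q_m)$ whose projection $\nu\circ\pi^{-1}$ satisfies $\dim(\nu\circ\pi^{-1})\geq \dim_H(K)-\varepsilon/4$. By Theorem~\ref{decompofeng}, this dimension equals $(H(q)-h)/\lambda_\nu$, where $H(q)=-\sum_i q_i\log q_i$, $\lambda_\nu$ is the Lyapunov exponent, and $h\geq 0$ is the overlap entropy carried by the conditional measures $\mu^{\pi^{-1}(\{x\})}$.

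Next, I would pass to a high iterate. For $k$ large, I view the system as $\tilde S_k=\{f_{\underline{i}}\}_{\underline{i}\in\Lambda^k}$ and apply Shannon--McMillan--Breiman together with Birkhoff's ergodic theorem to $\nu$, $\lambda_\nu$ and to the conditional measures $\mu^{\pi^{-1}(\{x\})}$ simultaneously. This yields a set $G_k\subset\Lambda^k$ with $\nu(G_k)\geq 1-\varepsilon$ on which, uniformly,
\begin{equation*}
\nu([\underline{i}])=e^{-k(H(q)+o(1))},\quad \|f'_{\underline{i}}\|=e^{-k(\lambda_\nu+o(1))},\quad \mu^{\pi^{-1}(\{x\})}([\underline{i}])=e^{-k(h+o(1))}.
\end{equation*}
In particular $\#G_k\geq e^{k(H(q)-o(1))}$.

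Then I would greedily extract a maximal subfamily $\underline{i}_1,\ldots,\underline{i}_{n_\varepsilon}$ of $G_k$ with $f_{\underline{i}_j}(K)$ pairwise disjoint. The key counting step is that, for every fixed $\underline{i}\in G_k$, the number of $\underline{i}'\in G_k$ with $f_{\underline{i}'}(K)\cap f_{\underline{i}}(K)\neq\emptyset$ is at most $e^{k(h+o(1))}$: such $\underline{i}'$ all meet a common fibre $\pi^{-1}(\{y\})$ up to bounded multiplicity, and the third estimate above shows each contributes mass $\approx e^{-k(h+o(1))}$ under the conditional measure, forcing their number to be of that order. Hence $n_\varepsilon\geq e^{k(H(q)-h-o(1))}$. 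Setting $S_\varepsilon=\{f_{\underline{i}_j}\}_{j=1}^{n_\varepsilon}$ and taking $P_\varepsilon$ to be, say, the uniform vector, the sub-IFS is strongly separated, so the Feng--Hu formula reduces to the classical entropy/Lyapunov ratio without overlap term and gives
\begin{equation*}
\dim(\mu_\varepsilon)=\frac{\log n_\varepsilon}{k\lambda_\nu+o(k)}\geq \frac{H(q)-h-o(1)}{\lambda_\nu+o(1)}\geq \dim_H(K)-\varepsilon.
\end{equation*}

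The main obstacle is the third step: the disjointness extraction together with the sharp bound on the overlap multiplicity in $G_k$ via the overlap entropy $h$. This is the heart of the Feng--Hu argument; it relies not on Shannon--McMillan for $\nu$ alone but on the simultaneous asymptotics of the conditional measures $\mu^{\pi^{-1}(\{x\})}$ furnished by Theorem~\ref{decompofeng}, which quantify precisely how much entropy is ``wasted'' on coincidences between cylinders.
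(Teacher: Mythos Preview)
The paper does not prove this statement; Proposition~\ref{varprinc} is quoted from Feng--Hu \cite{FH} and used as a black box in the proof of Proposition~\ref{modivar}. There is therefore no proof in the paper to compare your attempt against.

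Your outline is in the spirit of the Feng--Hu argument, but the mechanism you propose for the crucial counting step is not the right one. You claim that the competitors $\underline{i}'\in G_k$ with $f_{\underline{i}'}(K)\cap f_{\underline{i}}(K)\neq\emptyset$ ``all meet a common fibre $\pi^{-1}(\{y\})$'' and then apply the conditional estimate $\mu^{\pi^{-1}(\{x\})}([\underline{i}'])\approx e^{-kh}$ to bound their number. Neither assertion is justified: the intersection point depends on the pair $(\underline{i},\underline{i}')$, so the competitors are not captured by a single fibre; and even when $[\underline{i}']$ happens to meet a fibre $\pi^{-1}(\{y\})$, Theorem~\ref{decompofeng} only controls the conditional mass of the cylinder containing a \emph{typical} code of that fibre, not of an arbitrary cylinder touching it.

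The actual counting goes through the projected measure $\mu=\nu\circ\pi^{-1}$ rather than through the fibres. On $G_k$ all diameters are $e^{-k\lambda_\nu+o(k)}$, so every competitor $f_{\underline{i}'}(K)$ sits in a single ball $B$ of radius comparable to $e^{-k\lambda_\nu}$ around any fixed point of $f_{\underline{i}}(K)$. The cylinders $[\underline{i}']\subset\Lambda^k$ are pairwise disjoint with $\nu([\underline{i}'])=e^{-kH(q)+o(k)}$, and their union is contained in $\pi^{-1}(B)$; hence their number is at most $\mu(B)\,e^{kH(q)+o(k)}$. Exact dimensionality of $\mu$ (at a centre chosen via an Egorov step) then gives $\mu(B)\leq e^{-k\lambda_\nu\dim\mu+o(k)}$, and the identity $\lambda_\nu\dim\mu=H(q)-h$ yields the desired bound $e^{kh+o(k)}$. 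Once this is fixed, the rest of your sketch goes through.
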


Let us remark that, due to the the first item, the IFS $S_{\varepsilon}=\left\{T_ 1,...,T_{n_{\varepsilon}}\right\}$ satisfies the strong separation condition, hence the dimension of a weakly conformal measure depends continuously on the choice of the probability vector. Moreover, writing $\nu_{\varepsilon}>0$ the canonical measure on the coding associated with $\mu_{\varepsilon}$, then there exists $\lambda_{\nu_{\varepsilon}}$ (see \cite{FH}), such that for $\nu_{\varepsilon}$-almost $(x_n)_{n\in\mathbb{N}}$, it holds that 
$$\lim_{n\rightarrow+\infty}\frac{\log \vert T_{x_1}\circ ...\circ T_{x_n}(K) \vert}{n}=-\lambda_{\nu_{\varepsilon}}.$$

\sk

One first starts by proving the following modified version of Theorem \ref{varprinc}.
\begin{proposition}
\label{modivar}
Let $\varepsilon_0>0$. There exists an IFS $S_{\varepsilon_0}$ and a weakly conformal  measure $\mu_{\varepsilon_0}$ (associated with  $S_{\varepsilon_0}$) such that $\supp(\mu_{\varepsilon_0})=K$ and $\dim_H (\mu_{\varepsilon_0})\geq s-\varepsilon_0.$
\end{proposition}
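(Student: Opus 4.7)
The strategy is to augment the finite system produced by Proposition \ref{varprinc} by the generators $f_1,\dots,f_m$, giving them an arbitrarily small total probability. Fix $\varepsilon_1\in(0,\varepsilon_0)$ (to be determined at the end), and apply Proposition \ref{varprinc} to obtain $S_{\varepsilon_1}=\{f_{\underline{i}_1},\dots,f_{\underline{i}_{n_{\varepsilon_1}}}\}$ with pairwise disjoint images, together with a probability vector $\mathbf{p}=(p_1,\dots,p_{n_{\varepsilon_1}})$ whose associated weakly conformal measure $\mu_{\varepsilon_1}$ satisfies $\dim_H(\mu_{\varepsilon_1})\ge s-\varepsilon_1$. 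I then set
$$S_{\varepsilon_0}:=\{f_{\underline{i}_1},\dots,f_{\underline{i}_{n_{\varepsilon_1}}}\}\cup\{f_1,\dots,f_m\},$$
and, for a parameter $\eta\in(0,1)$, define the probability vector
$$\mathbf{P}_\eta:=\bigl((1-\eta)p_1,\dots,(1-\eta)p_{n_{\varepsilon_1}},\tfrac{\eta}{m},\dots,\tfrac{\eta}{m}\bigr).$$
Let $\mu_\eta$ be the weakly conformal measure associated with $(S_{\varepsilon_0},\mathbf{P}_\eta)$ and $\nu_\eta$ the corresponding Bernoulli measure on $\Lambda(S_{\varepsilon_0})^{\mathbb{N}}$, so that $\mu_\eta=\nu_\eta\circ\pi^{-1}$. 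The final choice will be $\mu_{\varepsilon_0}:=\mu_\eta$ for a sufficiently small $\eta$.

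For the support, since each $f_{\underline{i}_j}(K)\subset K$ and $\bigcup_{i=1}^{m}f_i(K)=K$, the compact set $K$ is Hutchinson-invariant for $S_{\varepsilon_0}$; uniqueness then identifies $K$ as the attractor of $S_{\varepsilon_0}$. Every coefficient of $\mathbf{P}_\eta$ being strictly positive, it follows that $\supp(\mu_\eta)=K$.

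For the lower bound $\dim_H(\mu_\eta)\ge s-\varepsilon_0$, I invoke Theorem \ref{decompofeng} applied to $(S_{\varepsilon_0},\mathbf{P}_\eta)$: the measure $\mu_\eta$ is exact-dimensional and
$$\dim(\mu_\eta)=\frac{H(\mathbf{P}_\eta)-h_\eta}{\lambda_\eta},$$
where $H$ is Shannon entropy, $\lambda_\eta$ is the Lyapunov exponent of $\nu_\eta$, and $h_\eta\ge 0$ is the asymptotic conditional fibre entropy. Corollary \ref{convproba} and continuity of Shannon entropy give
$$\lim_{\eta\to 0^+}\frac{H(\mathbf{P}_\eta)}{\lambda_\eta}=\frac{H(\mathbf{p})}{\lambda_{\nu_{\varepsilon_1}}}=\dim(\mu_{\varepsilon_1})\ge s-\varepsilon_1,$$
the middle equality using that strong separation of $S_{\varepsilon_1}$ forces the Feng-Hu fibre entropy of $\mu_{\varepsilon_1}$ to vanish.

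The remaining step, and principal obstacle, is to prove that $h_\eta\to 0$ as $\eta\to 0^+$. The heuristic is that the sub-system $\{f_{\underline{i}_1},\dots,f_{\underline{i}_{n_{\varepsilon_1}}}\}$ being strongly separated, the only source of fibre ambiguity in the coding of a point under $S_{\varepsilon_0}$ is the presence of generator symbols $f_i$; the density of such positions along a $\nu_\eta$-typical sequence is $\eta$ by the strong law of large numbers, and the conditional entropy contributed per such position is bounded by a combinatorial constant $C(S)$ depending only on $S$. A careful accounting in the spirit of the proof of Theorem \ref{decompofeng} should then yield $h_\eta\le C(S)\,\eta$. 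Taking $\varepsilon_1=\varepsilon_0/2$ and $\eta>0$ small enough to absorb both this error and the continuity defect in $H/\lambda$ produces $\dim(\mu_\eta)\ge s-\varepsilon_0$, concluding the proof.
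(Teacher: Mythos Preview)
Your overall strategy coincides with the paper's: augment the strongly separated system $S_{\varepsilon_1}$ from Proposition~\ref{varprinc} by the original generators $f_1,\dots,f_m$ with small weights, and use the Feng--Hu formula together with continuity of entropy and Lyapunov exponent to conclude. The support argument and the limit $H(\mathbf P_\eta)/\lambda_\eta\to H(\mathbf p)/\lambda_{\nu_{\varepsilon_1}}$ are both fine.

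The genuine gap is precisely the step you flag as the ``principal obstacle'': you do not prove that $h_\eta\to 0$, you only offer a heuristic. This is the entire substance of the proposition, and the paper devotes the bulk of its proof to it. Concretely, the paper proceeds as follows: first it proves a coding-rigidity lemma (Lemma~\ref{lemmjcpa}) stating that if $x,\tilde x\in\pi_\Theta^{-1}(\{y\})$ and $x\in A_N$ (the set of sequences with near-expected symbol frequencies up to time $N$), then $\tilde x_j=x_j$ at every position $j$ where $x_j$ is one of the strongly separated maps; this is an elementary injectivity argument using the disjointness of the $f_{\underline i_j}(K)$. Second, by the law of large numbers, the number of ``bad'' positions (those where $x_j\in\{1,\dots,m\}$) up to level $N'$ is at most $2m\varepsilon' N'$, so the number of level-$N'$ cylinders meeting $A_N\cap\pi_\Theta^{-1}(\{y\})$ is bounded by $\sum_{k\le 2m\varepsilon'N'}\binom{N'}{k}m^k$. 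A Stirling estimate bounds this by $e^{\sqrt{\varepsilon'}N'}$ for $\varepsilon'$ small, whence the packing dimension of $A_N\cap\pi_\Theta^{-1}(\{y\})$ is at most $\sqrt{\varepsilon'}$; since $\mu^{\pi^{-1}(\{y\})}$ gives this set mass $\ge 1/2$ and is exact-dimensional of dimension $h$, one gets $h\le\sqrt{\varepsilon'}$.

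One minor correction: your asserted bound $h_\eta\le C(S)\eta$ is too optimistic. The binomial term $\binom{N'}{\eta N'}$ contributes $\approx -\eta\log\eta$ to the exponential rate, so the correct bound is of order $\eta\log(1/\eta)$ (which the paper absorbs into $\sqrt{\varepsilon'}$). Your heuristic that ``the conditional entropy contributed per such position is bounded'' omits the entropy of the \emph{locations} of the bad positions.
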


\begin{remarque}
Similarly to  the proof of \cite[Theorem 2.13]{FH}, Proposition \ref{modivar}  provides a measure on $\Lambda^{\mathbb{N}}$ and taking weak limits of ergodic averages of this measure gives an ergodic measure fully supported on $K$ with dimension larger $s-\varepsilon.$
\end{remarque}

\begin{proof}
Fix $\varepsilon=\frac{\varepsilon_0}{2}>0$. Consider $S_{\varepsilon}=\left\{f_{\underline{i}_1},...,f_{\underline{i}_{n_{\varepsilon}}}\right\}$, $P_{\varepsilon}$, $\mu_{\varepsilon}$ as in Theorem \ref{varprinc} and $0<\varepsilon'<\frac{n_{\varepsilon}}{5m}\cdot \min_{1\leq i\leq m}p_i$.

Set $$\begin{cases}g_{j}=f_j \text{ for }1\leq i\leq m \\ g_{j}=f_{\underline{i}_j}\text{ for }m+1\leq i\leq n_{\varepsilon}\end{cases},$$

 $\widetilde{S}_{\varepsilon}=\left\{g_1 ,...,g_{m+n_{\varepsilon} } \right\}$ and denote by  $\widetilde{P}_{\varepsilon,\varepsilon^{\prime}}=(\widetilde{p}_1 ,...,\widetilde{p}_{m+n_{\varepsilon}})$ the probability vector defined as
$$\begin{cases}\widetilde{p}_j =\varepsilon^{\prime}\text{ for }1\leq j \leq m \\ \widetilde{p}_j =p_{j-m}-\frac{m}{n_{\varepsilon}}\varepsilon^{\prime}.
\end{cases}$$

Let $\mu_{\varepsilon,\varepsilon^{\prime}}$ be the weakly conformal measure associated with $\widetilde{S}_{\varepsilon}$ and $\widetilde{P}_{\varepsilon,\varepsilon^{\prime}}.$ Applying Theorem Theorem \ref{decompofeng} to $\mu_{\varepsilon,\varepsilon^{\prime}}$, let us prove that the corresponding $h$ (see second item of Theorem \ref{decompofeng}) goes to 0 as $\varepsilon^{\prime}$ goes to $0$.

\sk

Set $\Theta=\left\{1,...,n_{\varepsilon}+m\right\}$ and $\Theta^{*}=\bigcup_{k>0}\Theta^k .$ Let  us denote $\pi_{\Theta}$ the canonical projection. One endows $\Sigma_{\Theta}=\Theta^{\mathbb{N}}$ with metric $d_{\Theta}$ defined by, for any $x=(x_n),y=(y_n)\in\Sigma_{\Theta},$ $d_{\Theta}(x,y)=e^{-\min\left\{i\in\mathbb{N}:x_i \neq y_i\right\}}$ and $d_{\Theta}(x,x)=0$.

Let $\nu_{\varepsilon,\varepsilon^{\prime}}\in \mathcal{M}(\Theta^{\mathbb{N}})$ be the Bernoulli product verifying $\nu_{\varepsilon,\varepsilon^{\prime}}\circ \pi_{\Theta}^{-1}=\mu_{\varepsilon,\varepsilon^{\prime}}.$


 By the strong law of large number, for  every $x=(x_n)_{n\in\mathbb{N}}$ in a set $\widetilde{\Sigma}_{\Theta}$ of $\nu^{\varepsilon,\varepsilon^{\prime}}$-full measure, there exists $N_x \in\mathbb{N}$ such that for any $n\geq N_x,$ any $1\leq i \leq n_{\varepsilon}+m$, 
 
 \begin{equation}
 \label{freq}
 \left\vert\frac{\#\left\{1\leq j\leq n \ : x_j =i\right\}}{n}-\widetilde{p}_i\right\vert\leq\varepsilon^{\prime}.
 \end{equation}
For $n\in\mathbb{N}$, write
$$A_n=\left\{x\in\widetilde{\Sigma}_{\Theta}:N_x \leq n\right\} .$$ 
 
By Theorem \ref{decompofeng}, there exists $N$ such that, using the notation involved,  
 $$\mu_{\varepsilon,\varepsilon^{\prime}}\left(B_N=\left\{y:   \dim_H(\mu_{\varepsilon,\varepsilon'}^{\pi_{\Theta}^{-1}(\left\{y\right\})})=h \text{ and }\mu_{\varepsilon,\varepsilon'}^{\pi_{\Theta}^{-1}(\left\{y\right\})} (A_N)\geq \frac{1}{2}\right\}\right)\geq \frac{1}{2}.$$
 We fix such an $N$.

We need the following lemma.
\begin{lemme}
\label{lemmjcpa}
Consider $y\in B_N$, $x=(x_n)_{n\in\mathbb{N}}\in \pi_{\Theta}^{-1}(\left\{y\right\})\cap A_N $ and $N^{\prime}\geq N.$ 

Let $I_{N^{\prime}}((x_n)_{n\in\mathbb{N}})=\left\{1\leq k\leq N^{\prime}: 1 \leq x_k \leq m \right\}$. Then, for any $\widetilde{x}=( \widetilde{x}_n)_{n\in\mathbb{N}}\in \pi^{-1}(y)$ and any $1\leq j\leq N^{\prime}$ such that $j \notin I_{N^{\prime}}((x_n)_{n\in\mathbb{N}}),$ one has 
$$x_j =\widetilde{x}_j .$$ 
\end{lemme}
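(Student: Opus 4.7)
The plan is an induction on $j\in\{1,\dots,N'\}$ that identifies the big symbols of $x$ (the positions $j\notin I_{N'}(x)$, at which $x_j\in\{m+1,\dots,m+n_\varepsilon\}$) inside any competing coding $\widetilde x$, using the strong separation property $f_{\underline{i}_k}(K)\cap f_{\underline{i}_{k'}}(K)=\emptyset$ for $k\ne k'$ given by the first item of Proposition~\ref{varprinc}.

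The natural tool is the lift $\phi:\Theta^{*}\to\Lambda^{*}$ that replaces each occurrence of the symbol $m+\ell$ by the corresponding word $\underline{i}_\ell\in\Lambda^{|\underline{i}_\ell|}$ and leaves symbols in $\{1,\dots,m\}$ unchanged. By the very definition of $\widetilde S_\varepsilon$, $g_{\alpha_1}\circ\cdots\circ g_{\alpha_p}=f_{\phi(\underline\alpha)}$ for every $\underline\alpha\in\Theta^{*}$ and $\pi_\Theta=\pi_\Lambda\circ\phi$, so both $\phi(x)$ and $\phi(\widetilde x)$ are $\Lambda$-codings of $y$. Fix a big position $j\notin I_{N'}(x)$, write $x_j=m+\ell$, and assume inductively that $\widetilde x_k=x_k$ at every big position $k<j$. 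Under this hypothesis the prefixes $g_{x_1}\circ\cdots\circ g_{x_{j-1}}$ and $g_{\widetilde x_1}\circ\cdots\circ g_{\widetilde x_{j-1}}$ send $K$ into the same $S_\varepsilon$-cylinder containing $y$; applying the inverse of this common prefix reduces the question to level one, and the pairwise disjointness of the $f_{\underline{i}_k}(K)$'s immediately forces $\widetilde x_j=x_j$ whenever $\widetilde x_j$ is itself big.

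The delicate point, and the main obstacle, is to rule out the possibility that $\widetilde x_j\in\{1,\dots,m\}$ is a small symbol, since an original cylinder $f_i(K)$ may very well intersect the disjoint sets $f_{\underline{i}_k}(K)$. I would resolve this by looking ahead in $\widetilde x$: follow the subsequent small symbols until the $\phi$-expansion of $\widetilde x$ catches up, in the original alphabet, with $\phi(x_1,\dots,x_j)$, and compare the two $\phi$-images of equal length. The hypothesis $y\in B_N$, combined with the frequency bound $x\in A_N$ from~\eqref{freq} that guarantees a positive density of big symbols in $[1,N']$, lets one extract from the disintegration $\mu^{\pi_\Theta^{-1}(\{y\})}_{\varepsilon,\varepsilon'}$ enough rigidity to conclude: any small-symbol detour in $\widetilde x$ would push the corresponding composition outside the big $S_\varepsilon$-cylinder that is forced to contain $y$ by the next big step of $x$, again contradicting the strong separation of $S_\varepsilon$. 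Once this small-versus-big conflict is excluded, the induction closes and gives $\widetilde x_j=x_j$ at every big position of $x$.
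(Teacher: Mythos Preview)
Your argument has a real gap at the inductive step. Assuming only that $\widetilde x_k=x_k$ at the big positions $k<j$ does \emph{not} force the two prefixes $g_{x_1}\circ\cdots\circ g_{x_{j-1}}$ and $g_{\widetilde x_1}\circ\cdots\circ g_{\widetilde x_{j-1}}$ to coincide, nor to land in a common ``$S_\varepsilon$-cylinder'': at the small positions below $j$ the two sequences may differ freely, and a composition that interleaves small symbols with big ones need not sit inside any $S_\varepsilon$-cylinder at all. So there is no ``common prefix'' to invert, and your reduction to level one collapses.

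The paper sidesteps this entirely by a much more direct device: it lets $j_0$ be the \emph{first} index (over all positions, not just the big ones) at which $x_{j_0}\ne\widetilde x_{j_0}$. By minimality of $j_0$ the prefixes of length $j_0-1$ are then literally the same diffeomorphism; injectivity immediately gives $g_{x_{j_0}}(z)=g_{\widetilde x_{j_0}}(\widetilde z)$ for some $z,\widetilde z\in K$, hence $g_{x_{j_0}}(K)\cap g_{\widetilde x_{j_0}}(K)\ne\emptyset$, and the paper concludes by appealing to the separation of the $S_\varepsilon$-pieces.

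Your ``delicate point'' paragraph is not a proof either. The hypotheses $y\in B_N$ and $x\in A_N$ concern the conditional measures and the symbol frequencies along $x$; they carry no information about an individual competing coding $\widetilde x$, which is not even assumed to lie in $A_N$. The paper's argument makes no use of $B_N$, $A_N$, or the disintegration --- those enter only afterwards, in the counting estimate \eqref{majopack}. If you want to run a genuine look-ahead via the lift $\phi$, you would have to prove a purely geometric statement about $\Lambda$-words (that a word beginning with $\underline{i}_\ell$ cannot be matched by one beginning otherwise), and that is exactly what fails once small symbols are permitted in $\widetilde x$.
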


 \begin{proof}
We proceed by contradiction. Suppose that the claim is not true and let $j_0 \geq 1$ such for any $1\leq i < j_0$, $\widetilde{x}_i = x_i$ and $x_{j_0}\neq x_{j_0}.$ Write $z=\lim_{k \to +\infty}g_{x_{j_0 +1}}\circ g_{x_{j_0 +2}}\circ ... \circ g_{x_{j_ 0 +k}}(0)$ and  $\widetilde{ z}=\lim_{k \to +\infty}g_{\widetilde{x}_{j_0 +1}}\circ g_{\widetilde{x}_{j_0 +2 }}\circ ... g_{\widetilde{x}_{j_ 0 +k}}(0).$ Then, recalling that $x,\widetilde{x}\in \pi_{\Theta}^{-1}(\left\{y\right\}),$ $$g_{x_1}\circ ... \circ g_{x_{j_0-1}}\circ g_{x_{j_0}}(z)=g_{x_0}\circ ...\circ g_{x_{j_0-1}} \circ g_{\widetilde{x}_{j_0}}(\widetilde{z})=y,$$ which implies that $g_{x_{j_0}}(z)=g_{\widetilde{x}_{j_0}}(\widetilde{z})$, yielding to a contradiction since $g_{x_{j_0}}(K)\cap g_{ \widetilde{x}_{j_0}}(K)=\emptyset.$ 
 \end{proof}

Continuing the proof of the proposition, we note also that, by \eqref{freq}, for every $x\in\widetilde{\Sigma}_{\Theta}$ and $N'\geq N,$
\begin{equation} 
\label{jcpa1} 
 \#\left\{1\leq k\leq N^{\prime} \ : \ x_{k}\in\left\{1,...,m\right\}\right\}\leq 2m\varepsilon^{\prime}N^{\prime}.
 \end{equation}
Lemma \ref{lemmjcpa} together with \eqref{jcpa1} yields
\begin{align*}
\#\left\{\underline{i}\in\Theta^{N^{\prime}} \ : [\underline{i}]\cap A_{N}\cap \pi_{\Theta}^{-1}(\left\{y\right\})\right\}&\leq  \sum_{k=0}^{\lfloor 2m\varepsilon^{\prime}N^{\prime}\rfloor+1}\binom{ N^{\prime}} {k}m^{k} \\
& \leq (\lfloor 2m\varepsilon^{\prime}N^{\prime}\rfloor+2)\binom{ N^{\prime}} {\lfloor 2m\varepsilon^{\prime}N^{\prime}\rfloor+1}m^{\lfloor 2m\varepsilon^{\prime}N^{\prime}\rfloor+1},
\end{align*}
where we used that $\varepsilon'<\frac{1}{5m}$ so that $2m\varepsilon' N' <\frac{N'}{2}$, provided that $N$ was chosen large enough and, for any $0\leq k\leq \lfloor 2m\varepsilon^{\prime}N^{\prime}\rfloor+1,$ $\binom{N'}{k}\leq \binom{ N^{\prime}} {\lfloor 2m\varepsilon^{\prime}N^{\prime}\rfloor+1}.$ Using Sterling formula, there exists a constant $C>0$ such that
\begin{align}
\label{majopack}
&\#\left\{\underline{i}\in\Theta^{N^{\prime}} \ : [\underline{i}]\cap A_{N}\cap \pi_{\Theta}^{-1}(\left\{y\right\})\right\} \nonumber\\
&\leq C(\lfloor 2m\varepsilon^{\prime}N^{\prime}\rfloor+2)\frac{(N^{\prime})^{\lfloor 2m\varepsilon^{\prime}N^{\prime}\rfloor+1}\cdot m^{\lfloor 2m\varepsilon^{\prime}N^{\prime}\rfloor+1}}{\left(\frac{\lfloor 2m\varepsilon^{\prime}N^{\prime}\rfloor+1}{e}\right)^{\lfloor 2m\varepsilon^{\prime}N^{\prime}\rfloor+1}\sqrt{2\pi(\lfloor 2m\varepsilon^{\prime}N^{\prime}\rfloor+1)}}\nonumber  \\
&\leq C(\lfloor 2m\varepsilon^{\prime}N^{\prime}\rfloor+2)\left(\frac{m N^{\prime}}{\frac{2m\varepsilon^{\prime}N^{^{\prime}}}{e}}\right)^{\lfloor 2m\varepsilon^{\prime}N^{\prime}\rfloor+1}\frac{1}{\sqrt{2\pi(\lfloor 2m\varepsilon^{\prime}N^{\prime}\rfloor+1)}}\nonumber \\
& \leq C(\lfloor 2m\varepsilon^{\prime}N^{\prime}\rfloor+2)\left(\frac{e}{2\varepsilon^{\prime}}\right)^{3mN^{\prime}\varepsilon^{\prime}}\nonumber\\
&= C(\lfloor 2m\varepsilon^{\prime}N^{\prime}\rfloor+2)e^{3mN^{\prime}\varepsilon^{\prime}\log \frac{e}{2\varepsilon^{\prime}}}\leq e^{\sqrt{\varepsilon^{\prime}}N^{\prime}},
\end{align}
provided that $\varepsilon^{\prime}$ was chosen small enough at start and $N$ (so $N^{\prime}$ too) large enough.

Since \eqref{majopack} holds for any $N^{\prime}\geq N$, one obtains that 
$$\dim_P (A_{N}\cap \pi_{\Theta}^{-1}(\left\{y\right\}))\leq \sqrt{\varepsilon^{\prime}}.$$
Recalling the third item of Theorem \ref{decompofeng}, one gets 


$$h\leq \sqrt{\varepsilon^{\prime}}.$$

By Remark \ref{remboundlyapu} and the fourth item of Theorem \ref{decompofeng}, there exists a constant $\widetilde{C}$, depending on the system $S$ such that $\dim_H (\mu_{\varepsilon,\varepsilon^{\prime}})\geq \frac{\dim_H (\nu_{\varepsilon,\varepsilon^{\prime}})}{\lambda_{\nu_{\varepsilon,\varepsilon^{\prime}}}}-\widetilde{C}\sqrt{\varepsilon^{\prime}},$ where  $\lambda_{\nu_{\varepsilon,\varepsilon^{\prime}}}$ is defined by Definition \ref{expolyapu}.
Also, by Corollary \ref{convproba}, for any Bernoulli product $\nu \in\mathcal{M}(\Theta)$ associated with a probability vector $\widehat{P}\in(0,1)^{n_{\varepsilon}+m}$ the Lyapunov exponent depends continuously on the vector $\widehat{P}.$ 

Since it is also the case for $\dim_H (\nu)$ and $\lim_{\varepsilon^{\prime}\to 0}P_{\varepsilon,\varepsilon^{\prime}}=\left\{0\right\}^m \times P_{\varepsilon}$, for $\varepsilon^{\prime}$ small enough, we conclude that
$$\dim_H (\mu_{\varepsilon,\varepsilon^{\prime}})\geq \frac{\dim_H (\nu_{\varepsilon})}{\lambda_{\nu_{\varepsilon}}}-2\varepsilon \geq s-2\varepsilon,$$
which concludes the proof.
\end{proof}
\subsection{Proof of Theorem \ref{SHRTARG}}

Let us recall that, by Proposition \ref{propopres} and Definition \ref{expdim}, $\dim(S)$ verifies, for any $z\in K$,  
$$P(\dim(S))=\lim_{k\rightarrow+\infty}\frac{1}{k}\log\sum_{\underline{i}\in \Lambda^k} \vert f_{\underline{i}}(K)\vert^{\dim(S)} =0.$$

Fix $x_0\in K$ $\delta\geq 1$ and write $$\mathcal{L}(\delta)=\limsup_{\underline{i}\in \Lambda^*}B(f_{\underline{i}}(x_0),\vert f_{\underline{i}}(K)\vert^{\delta}).$$

Let us first show that $\dim_H (\mathcal{L}(\delta))\leq \frac{\dim (S)}{\delta}.$

Let $\alpha$ and $C_{\alpha}$ be as in \eqref{encafibar}, $0<t\leq \frac{\alpha}{2}$. For $k\in\mathbb{N}$, set
$$\Lambda^{(k)}_t =\left\{\underline{i}=(i_1,...,i_{\ell}): \vert f_{i_1,...,i_{\ell}}(K)\vert<  t^{k}\leq \vert f_{i_1,...,i_{\ell}-1}(K)\vert \right\}.$$
Note that, there exists $k_0 \in\mathbb{N}$ so that, for any $k\geq k_0,$ if $\underline{i}$ belongs to $\Lambda^{(k)}_t$, then, for any $1\leq j\leq m$, $\underline{i}j \notin \Lambda^{(k)}_{t}.$ In particular, for any $\underline{i}\neq \underline{j}\in \Lambda^{(k)}_t$, $[\underline{i}]\cap [\underline{j}]=\emptyset.$ This implies that, for any $\nu \in\mathcal{M}(\Lambda^{\mathbb{N}}),$
\begin{equation}
\sum_{\underline{i}\in \Lambda^{(k)}_t}\nu([\underline{i}])\leq 1.
\end{equation}

Consider $\varepsilon>0$. Let us recall that, by proposition \ref{weakconfapeupres} applied with $\varepsilon'=-\frac{\varepsilon}{2}\log t $ and $s=\dim(S)$ combined with Remark \ref{frakegibbs}, there exists $\gamma_{\varepsilon'}$ and a measure $\nu_{\varepsilon'}\in\mathcal{M}(\Lambda^{\mathbb{N}})$ such that for any $\underline{i}=(i_1,...,i_k)\in\Lambda^{*},$
\begin{equation}
\gamma_{\varepsilon'}^{-1}e^{k\frac{\varepsilon}{2}\log t}\vert f_{\underline{i}}(K)\vert^{\dim(S)}\leq \nu_{\varepsilon'}([\underline{i}])\leq \gamma_{\epsilon'}e^{-k\frac{\varepsilon}{2}\log t}\vert f_{\underline{i}}(K)\vert^{\dim(S)}.
\end{equation}


For any  $\delta\geq 1$,


\begin{align}
\label{sumkiconv}
\sum_{\underline{i}\in\in\bigcup_{k\geq k_0}\Lambda^k}\left(\vert f_{\underline{i}}(K)\vert^\delta\right)^{\frac{\dim(S)+\varepsilon}{\delta}}&=\sum_{\underline{i}\in\bigcup_{k\geq k_0}\Lambda^{(k)}_t}\vert f_{\underline{i}}(K)\vert^{\dim(S)+\varepsilon}\nonumber\\
&\leq \sum_{k\geq k_0}\sum_{\underline{i}\in\Lambda^{(k)}_t}t^{k\varepsilon}\gamma_{\varepsilon'}e^{-k\frac{\varepsilon}{2}\log t}\nu_{ \varepsilon'}([\underline{i}])\leq \gamma_{\varepsilon'}\sum_{k\geq k_0}t^{k\frac{\varepsilon}{2}}<+\infty.
\end{align}
As a consequence,
 $$\dim_H (\limsup B(f_{\underline{i}}(x_0),\vert f_{\underline{i}}(K)\vert^{\delta}))\leq \frac{\dim(S)+\varepsilon}{\delta},$$
and letting $\varepsilon$ tend to $0$ establishes the upper-bound.

\sk

Now we established the desired lower-bound for $\dim_H (\mathcal{L}(\delta)).$

  \sk

Let $\varepsilon>0$ and $\mu_{\varepsilon}$ be a weakly conformal measure as in Proposition \ref{modivar}. For any $k\in\mathbb{N}$, the balls $\left\{B(f_{\underline{i}}(x_0),\vert f_{\underline{i}}(K)\vert)\right\}_{\underline{i}\in\Lambda^{k}}$ are centered on $K=\supp (\mu)$ and cover $K$. This implies that $ \mu_{\varepsilon}(\limsup_{\underline{i}\in \Lambda^{*}}B(f_{\underline{i}}(x_0),\vert f_{\underline{i}}(K)\vert))=1.$

 Applying Theorem \ref{prop-ss}, one gets
$$\frac{s-\varepsilon}{\delta}\leq  \dim_H \left(\limsup_{\underline{i}\in \Lambda^{*}}B\big(f_{\underline{i}}(x_0),\vert f_{\underline{i}}(K)\vert^{\delta}\big)\right).$$
Letting $\varepsilon \to 0$ finishes the proof.
\section{Proof of Theorem \ref{compbaker}}
\label{seccompbaker}

\begin{proof}
Let us first check that $$\dim_H \limsup_{\underline{i}\in \Lambda^{*}}B(f_{\underline{i}}(x),(\vert f_{\underline{i}}(K))\vert g(\vert \underline{i}\vert))^{\delta\frac{s_g}{\dim(S)}})\leq \frac{\dim(S)}{\delta}:$$

Let $\varepsilon>0$. Recalling \eqref{sg}, one has  

\begin{align*}
\sum_{k\geq 0}\sum_{\underline{i}\in\Lambda^k}\Big((\vert f_{\underline{i}}(K))\vert g(k))^{\delta\frac{s_g}{\dim(S)}}\Big)^{\frac{\dim(S)+\varepsilon}{\delta}}&\leq\sum_{k\geq 0}\sum_{\underline{i}\in\Lambda^k}k\Big(\vert f_{\underline{i}}(K))\vert g(k)\Big)^{s_g\frac{\dim(S)+\varepsilon}{\dim(S)}}<+ \infty  .
\end{align*}
This proves that $\dim_H \limsup_{\underline{i}\in \Lambda^{*}}B\Big(f_{\underline{i}}(x),\big(\vert f_{\underline{i}}(K))\vert g(\vert \underline{i}\vert\big)\Big)^{\delta\frac{s_g}{\dim(S)}})\leq \frac{\dim(S) +\varepsilon}{\delta}$. Letting $\varepsilon\to 0$ concludes this part of the proof.

\sk

Now we prove that $$\dim_H \limsup_{\underline{i}\in \Lambda^{*}}B\Big(f_{\underline{i}}(x),(\vert f_{\underline{i}}(K))\vert g(\vert \underline{i}\vert))^{\delta\frac{s_g}{\dim(S)}}\Big)\geq \frac{\dim(S)}{\delta}.$$

\sk

Let $\varepsilon>0.$ Note that one has, by \eqref{sg},

$$\sum_{k\geq 0}\sum_{\underline{i}\in\Lambda^k}k\big(\vert f_{\underline{i}}(K))\vert g(k)\big)^{ \frac{s_g}{1+\varepsilon}}=+\infty.$$
By Theorem \ref{resbaker}, for any $\varepsilon>0$, 

$$\mu\Big( \limsup_{\underline{i}\in \Lambda^{*}}B(f_{\underline{i}}(x),(\vert f_{\underline{i}}(K))\vert g(\vert \underline{i}\vert))^{\frac{ s_g}{(1+\varepsilon)  \dim(S)}})\Big)=1.$$ 

Using Theorem \ref{prop-ss}, one gets 
$$\dim_H  \limsup_{\underline{i}\in \Lambda^{*}}B\Big(f_{\underline{i}}(x),\left(\vert f_{\underline{i}}(K))\vert g(\vert \underline{i}\vert)\right)^{\delta\frac{s_g}{\dim(S)}}\Big) \geq \frac{\dim_H (\mu)}{(1+\varepsilon)\delta}.$$
Letting $\varepsilon\to 0,$ one has $\dim_H  \limsup_{\underline{i}\in \Lambda^{*}}B\Big(f_{\underline{i}}(x),\left(\vert f_{\underline{i}}(K))\vert g(\vert \underline{i}\vert)\right)^{\delta\frac{s_g}{\dim(S)}}\Big)\geq \frac{\dim(S)}{\delta}$ and this ends the proof.
\end{proof}
\bibliographystyle{plain}
\bibliography{bibliogenubi}

\begin{thebibliography}{10}

\bibitem{AllenB}
D.~Allen and S.~Bárány.
\newblock On the hausdorff measure of shrinking target sets on self-conformal
  sets.
\newblock {\em accepted Mathematika}, 2021.

\bibitem{Baker}
M.~Baker.
\newblock Intrinsic diophantine approximation for overlapping iterated function
  systems.
\newblock {\em submitted}, 2022.

\bibitem{BFmult}
J.~Barral and D.~Feng.
\newblock On multifractal formalism for self-similar measures with overlaps.
\newblock {\em Math. Z.}, (298):359--383, 2021.

\bibitem{BV}
V.~Beresnevitch and S.~Velani.
\newblock A mass transference principle and the {D}uffin-{S}chaeffer conjecture
  for {H}ausdorff measures.
\newblock {\em Ann. Math.}, 164(3):22 pages, 2006.

\bibitem{ED2}
E.~Daviaud.
\newblock Extraction of optimal subsequences of sequence of balls, and
  application to optimality estimates of mass transference principles. 2022.
\newblock {\em preprint}, 2022.

\bibitem{ED3}
E.~Daviaud.
\newblock An heterogeneous ubiquity theorem, application to self-similar
  measures with overlaps.
\newblock {\em preprint}, 2022.

\bibitem{Fa1}
K.~Falconer.
\newblock {\em Fractal Geometry}.
\newblock John Wiley \& Sons, 1990.

\bibitem{F}
K.~Falconer.
\newblock {\em Fractal geometry}.
\newblock John Wiley \& Sons, Inc., Hoboken, NJ, second edition, 2003.
\newblock Mathematical foundations and applications.

\bibitem{FanSchmTrou}
A.-H. Fan, J.~Schmeling, and S.~Troubetzkoy.
\newblock A multifractal mass transference principle for {G}ibbs measures with
  applications to dynamical diophantine approximation.
\newblock {\em Proc. London Math. Soc.}, 107 (5):1173--1219, 2013.

\bibitem{FH}
D.~Feng and H.~Hu.
\newblock Dimension theory of iterated function systems.
\newblock {\em Comm. Pure Appl. Math.}, 62:1435--1500, 2009.

\bibitem{Feng2007}
De-Jun Feng.
\newblock Gibbs properties of self-conformal measures and the multifractal
  formalism.
\newblock {\em Ergodic Theory Dynam. Systems}, 27(3):787--812, 2007.

\bibitem{HV}
R.~Hill and S.~Velani.
\newblock The ergodic theory of shrinking targets.
\newblock {\em Inv. Math.}, 119:175--198, 1995.

\bibitem{Hoc}
M.~Hochman.
\newblock On self-similar sets with overlaps and inverse theorems for entropy
  in $\mathbb{R}^d$.
\newblock {\em To appear in Memoires of the AMS}, 2022.

\bibitem{Hutchinson}
J.E. Hutchinson.
\newblock Fractals and self similarity.
\newblock {\em Indiana Univ. Math. J.}, 30:713--747, 1981.

\bibitem{LS}
L. and S.~Seuret.
\newblock Diophantine approximation by orbits of expanding markov maps.
\newblock {\em Ergod. Th. Dyn. Syst.}, 33:585--608, 2013.

\bibitem{Ma}
P.~Mattila.
\newblock {\em Geometry of Sets and Measures in Euclidean Spaces: Fractals and
  Rectifiability}.
\newblock Cambridge Studies in Advanced Mathematics, 1999.

\bibitem{PR}
T.~Persson and M.~Rams.
\newblock On shrinking targets for piecewise expanding interval maps.
\newblock {\em Ergod. Th. Dyn. Syst.}, 37:646–663, 2017.

\end{thebibliography}
\end{document}